\documentclass[11pt]{amsart}

\usepackage{amsmath}
\usepackage{amstext}
\usepackage{amssymb}
\usepackage{amsthm}
\usepackage{enumerate}
\usepackage{bbm}
\usepackage{comment}
\usepackage[USenglish]{babel}
\usepackage[utf8]{inputenc}
\usepackage[T1]{fontenc}

\addtolength{\textwidth}{3.5cm}
\addtolength{\oddsidemargin}{-1.75cm}
\addtolength{\evensidemargin}{-1.75cm}

\makeatletter
\def\imod#1{\allowbreak\mkern10mu({\operator@font mod}\,\,#1)}
\makeatother

\theoremstyle{plain}
\newtheorem{thm}{Theorem}
\theoremstyle{definition}
\newtheorem{defi}[thm]{Definition}
\newtheorem{cor}[thm]{Corollary}
\newtheorem*{cor*}{Corollary}
\newtheorem*{lem*}{Lemma}
\newtheorem*{thm*}{Theorem}
\newtheorem{lem}[thm]{Lemma}
\newtheorem{question}[thm]{Question}

\newtheorem{remark}[thm]{Remark}

\newtheorem{clm}[thm]{Claim}


\DeclareMathOperator{\dom}{dom}

\DeclareMathOperator{\id}{id}
\DeclareMathOperator{\fil}{fil}

\DeclareMathOperator{\stem}{stem}
\DeclareMathOperator{\supp}{supp}

\let\split\undefined
\DeclareMathOperator{\split}{split}

\DeclareMathOperator{\FF}{FF}

\DeclareMathOperator{\Sl}{sl}

\renewcommand{\id}{\mathrm{id}}

\newcommand{\bbP}{\mathbb{P}}

\newcommand{\bbS}{\mathbb{S}}
\newcommand{\bbM}{\mathbb{M}}

\newcommand{\al}{\alpha}
\newcommand{\be}{\beta}

\newcommand{\ka}{\kappa}

\renewcommand{\phi}{\varphi}

\newcommand{\om}{\omega}



\newcommand{\calA}{\mathcal{A}}
\newcommand{\calB}{\mathcal{B}}
\newcommand{\calC}{\mathcal{C}}

\newcommand{\calE}{\ensuremath{\mathcal{E}}}
\newcommand{\calF}{\mathcal{F}}

\newcommand{\calH}{\ensuremath{\mathcal{H}}}
\newcommand{\calI}{\ensuremath{\mathcal{I}}}

\newcommand{\calP}{\mathcal{P}}

\newcommand{\calU}{\mathcal{U}}
\newcommand{\calX}{\mathcal{X}}
\newcommand{\calY}{\mathcal{Y}}


\newcommand{\concatB}{\mathbin{\rotatebox[origin=c]{90}{\scalebox{.7}{(\kern1ex)}}}}

\date{\today}
\begin{document}

\title[Higher Independence]{Higher Independence}

\author{Vera Fischer}
\address{Institute of Mathematics, University of Vienna, Kolingasse 14-16, 1090 Wien, Austria}
\email{vera.fischer@univie.ac.at}

\author{Diana Carolina Montoya}
\address{Institute of Mathematics, University of Vienna, Kolingasse 14-16, 1090 Wien, Austria}
\email{diana.carolina.montoya.amaya@univie.ac.at}

\thanks{\emph{Acknowledgments.}: The authors would like to thank the Austrian Science Fund (FWF) for the generous support through Grant Y1012-N35, as well as I4039 (Fischer) and T1100 (Montoya).}

\subjclass[2000]{03E35, 03E17}

\keywords{cardinal characteristics; independent families; generalised Baire spaces, large cardinals; forcing}

\begin{abstract} 
We study higher analogues of the classical independence number on $\omega$.
For $\kappa$ regular uncountable, we denote by $i(\kappa)$ the minimal size of a maximal $\kappa$-independent family. We establish ZFC relations between $i(\kappa)$ and the standard higher analogues of some of the classical cardinal characteristics, e.g. $\mathfrak{r}(\kappa)\leq\mathfrak{i}(\kappa)$ and $\mathfrak{d}(\kappa)\leq\mathfrak{i}(\kappa)$. 
 
For $\kappa$ measurable, assuming that $2^\kappa=\kappa^+$ we construct a maximal $\kappa$-independent family which remains maximal after the $\kappa$-support product of $\lambda$ many copies of $\kappa$-Sacks forcing. Thus, we show the consistency of $\kappa^+=\mathfrak{d}(\kappa)=\mathfrak{i}(\kappa)<2^\kappa$. We conclude the paper with interesting open questions and discuss difficulties regarding other natural approaches to higher independence. 
\end{abstract}

\maketitle
\section{Introduction}

A family $\calA$ contained in $[\omega]^\omega$ is said to be independent if for every two finite disjoint subfamilies $\calB$ and $\calC$ the set 
$\bigcap\calB\backslash\bigcup\calC$ is infinite. We refer to such sets as boolean combinations. The least size of a maximal (under inclusion) independent family is denoted $\mathfrak{i}$. For an excellent introduction to the subject of cardinal characteristics of the continuum and definition of various characteristics we refer the reader to~\cite{blass}. 

The past decade has seen an increased volume of work regarding natural higher analogues for uncountable cardinals $\kappa$ of the classical cardinal characteristics. However, even though we already have a comparatively rich literature in this area there is very little known about analogues of the notion of independence. Even in the classical, countable setting, the independence number, and the notion of independence in general, do not seem to be that well-studied. Among the many open questions surrounding independence are the consistency of $\hbox{cof}(\mathfrak{i})=\omega$ and the consistency of $\mathfrak{i}<\mathfrak{a}$. 
A difficulty in the study of the higher independence number is the fact that it is not \emph{a priori} clear what the natural generalization of the classical independence number should be. Given an uncountable cardinal $\kappa$\footnote{In the paper, we only study the case in which $\kappa$ is regular.} one may consider subfamilies $\calA$ of $[\kappa]^\kappa$ which have the property that every boolean combination generated by strictly less than $\kappa$ many elements of $\calA$ is unbounded. That is, one may require that for every two disjoint subfamilies $\calB$ and $\calC$ of $\calA$, such that $|\calB|<\kappa$ and $|\calC|<\kappa$, the boolean combination $\bigcap\calB\backslash\bigcup\calC$ is unbounded. We refer to such families as strongly independent. A major problem presenting itself in the study of this notion of strong independence on $\kappa$ is the very existence of maximal, under inclusion, strongly independent families. Results regarding these families, together with a number of interesting open questions are included in the last section of the paper.
An earlier study of the notion of strong independence can be found 
in~\cite{KK}, where it is shown that the existence of a maximal strongly-$\omega_1$-independent family is equiconsistent with the existence of a measurable.

A more restrictive, but fruitful, approach towards the generalization of the classical notion of independence is the requirement that for a given family $\calA\subseteq[\kappa]^\kappa$ the finitely generated boolean combinations are unbounded. That is, given a family $\calA\subseteq[\kappa]^\kappa$ we say that $\calA$ is $\kappa$-independent if for every two disjoint finite subfamilies $\calB$ and $\calC$ contained in $\calA$, the set $\bigcap\calB\backslash\bigcup\calC$ is unbounded.\footnote{Clearly every strongly independent family is independent.}
The existence of a maximal under inclusion $\kappa$-independent family is provided by the Axiom of Choice and thus given an uncountable regular cardinal $\kappa$, one can define the higher independence number, denoted $\mathfrak{i}(\kappa)$, to be the minimal size of a maximal $\kappa$-independent family. A standard diagonalization argument going over  all boolean combinations, shows that $\kappa^+\leq \mathfrak{i}(\kappa)$. Classical examples of independent families of cardinality $2^\omega$ do  generalize into the uncountable and provide the existence of $\kappa$-independent  families and so of maximal $\kappa$-independent families of cardinality $2^\kappa$ (see Lemma~\ref{classic}). An example of a strongly $\kappa$-independent family of cardinality $2^\kappa$, under some additional hypothesis on $\kappa$, is provided in Lemma~\ref{strong_classic}. 

One of the main breakthroughs in the study of the classical independence number is the consistency of $\mathfrak{i}<\mathfrak{u}$, established in 1992 by S. Shelah (see~\cite{SS}). The consistency proof carries a somewhat hidden construction of a Sacks indestructible maximal independent family, that is a maximal independent family which remains maximal after the countable support product and countable support iterations of Sacks forcing. A tree version of Shelah's poset, known as party forcing, has been used in~\cite{DCJGVF} to establish the consistency of $\mathfrak{i}=\mathfrak{f}<\mathfrak{u}$, where $\mathfrak{f}$ is the free sequences number.\footnote{Partial orders and their tree versions often differ. Good example is given by the poset for adding an infinitely often equal real and its tree version. The former adds a Silver real and so kills all $p$-points (see~\cite{DCOG}), while the latter preserves $p$-points (see~\cite{MGHJSS}).} For recent studies on Sacks indestructible, co-analytic maximal independent families see~\cite{JBVFYK}, as well as \cite{CCVFOGJS, JBVFCS, CS}.  In this paper, we prove:

\begin{thm*} Let $\kappa$ be  a measurable cardinal and let 
$2^\kappa=\kappa^+$. Then there is a maximal $\kappa$-independent family which remains maximal after the $\kappa$-support product of $\lambda$-many copies of $\kappa$-Sacks forcing.
\end{thm*}

The existence of this indestructible maximal $\kappa$-independent family is closely related to the properties of a normal measure $\calU$ on $\kappa$. With the indestructible family $\calA$, we associate a $\kappa^+$-complete filter $\fil_{<\omega,\kappa}(\calA)$ which is properly contained in $\calU$ and its elements meet every boolean combination on an unbounded set. The properties of this filter capture to a great extent the indestructibility of the associated independent family. 

For readers familiar with the countable setting, we will draw a more detailed comparison. As developed originally in the work of Shelah~\cite{SS} and later analyzed for example in~\cite{DCJGVF}, an independent family which is maximal in a strong sense and whose density filter (a similarly to $\fil_{<\omega,\kappa}(\calA)$ associated filter) is selective (which means both a $P$-set and $Q$-set) is indestructible by countable support products and iterations of Sacks forcing. While $\fil_{<\omega,\kappa}(\calA)$ is a $\kappa$-$P$-set, in the sense that every subfamily of cardinality $\leq \kappa$ has a pseudo-intersection in the filter (see Definition~\ref{def_Pset_Qset} and Lemma \ref{P-set}), the role of the $Q$-set property from the countable setting is taken by the fact that for every ground model strictly increasing function in $^\kappa\kappa$ there is a set in the filter whose enumeration function grows faster than the given function (see Lemma~\ref{analogue_Qset} and Corollary~\ref{closure_club}).\footnote{In the countable setting the filter in question is generated by a tower, see for example \cite{JBVFYK} and so the filter remains a $P$-set throughout an iteration of Sacks forcing. For our current argument, which only deals with products of $\kappa$-Sacks forcing, the fact that $\fil_{<\omega,\kappa}(\calA)$ is a $\kappa$-$P$-set is sufficient.}
In both, the countable and the uncountable setting a strengthening of the maximality of the corresponding maximal independent family plays an important role. In the countable setting this strengthening is known as dense maximality, a property which originally appears in~\cite{MGSS}. The $\kappa$-maximal independent family which we construct is densely maximal in a similar sense (see Definition~\ref{def.dense.max}). Moreover we make an explicit use of an equivalent characterisation of dense maximality given in Lemma \ref{equiv}, characterization which plays a key role in our main theorem. An analogue to the countable setting of the overall approach, which we take in this paper can be found in the more recent studies \cite{JBVFCS, CS, CCVFOGJS}.  Note that an analogue of the equivalent characterization given in Lemma~\ref{equiv} implicitly appears in \cite{SS}.


Finally, the existence of a $\kappa$-mad family, which remains maximal after an arbitrarily long $\kappa$-supported product of $\kappa$-Sacks reals is a straightforward generalization of the classical case. Moreover, if $\mathfrak{d}(\kappa)=\kappa^+$ then $\mathfrak{a}(\kappa)=\kappa^+$ 
(see~\cite{BHZ} and~\cite{DRSS}). Thus our result leads to the following statement:

\begin{thm*}
	Let $\kappa$ be a measurable cardinal and $2^\kappa=\kappa^+$. Then there is a cardinal preserving generic extension in which 
	$$\mathfrak{a}(\kappa)=\mathfrak{d}(\kappa)=\mathfrak{r}(\kappa)=\mathfrak{i}(\kappa)=\kappa^+<2^\kappa.$$
\end{thm*}

One of the very interesting open questions regarding the classical independence number is the consistency of $\mathfrak{i}<\mathfrak{a}$. As a very partial result towards this question we obtain the following:

\begin{cor*}
	Let $\kappa$ be regular uncountable. If $\mathfrak{i}(\kappa)=\kappa^+$ then $\mathfrak{a}(\kappa)=\kappa^+$.
\end{cor*}

\noindent
{\emph{Structure of the paper:}} In Section~\ref{higher_ind} we define a notion of independence at $\kappa$, for $\kappa$ arbitrary infinite cardinal
and define the cardinal number $\mathfrak{i}(\kappa)$ for $\kappa$ regular uncountable. In Section~\ref{the_poset}, given a measurable cardinal $\kappa$, witnessed by a normal measure $\calU$ and working under the hypothesis that $2^\kappa=\kappa^+$, we define a $\kappa^+$ closed poset $\bbP_\calU$ which adjoins a maximal $\kappa$-independent family, which we denote $\calA_G$.\footnote{Using the normal measure $\calU$ and the hypothesis $2^\kappa=\kappa^+$ one can alternatively use the properties of the poset $\bbP_\calU$ to construct a family $\calA$ having all essential properties of $\calA_G$ using a transfinite recursion of length $\kappa^+$.} In Section~\ref{density_ideal_section} we study the properties of an ideal on $\kappa$, to which we refer as density ideal
and denote $\id_{<\omega,\kappa}(\calA_G)$, which is contained in the dual ideal of $\calU$ and which naturally captures crucial properties of the independent family $\calA_G$. In section~\ref{partition_properties_section}, we show that the dual filter of this ideal, denoted $\fil_{<\omega,\kappa}(\calA_G)$ is a $\kappa$-P-set. 
In Section~\ref{dense_maximality_section} we show that the family $\calA_G$ is densely maximal in a natural sense and characterize dense maximality in terms of properties of the density ideal. Section~\ref{outer_hull} introduces the concepts of preprocessed conditions and outer hulls necessary for the proofs in the last section. In Section~\ref{Sacks_indestructibility_section} we prove our main theorem, by showing that the densely maximal $\kappa$-independent family $\calA_G$ 
remains maximal after the $\kappa$-support product of $\lambda$ many copies of $\kappa$-Sacks forcing. We conclude the paper with some open questions and an appendix, discussing the notion of strong independence.

\section{The Higher Independence Number}\label{higher_ind}

In the following we set to define a higher analogue of the notion of independent families on $\omega$ for the special case in which the boolean combinations are finitely generated.

\begin{defi} Let $\kappa$ be a regular uncountable cardinal and let $\FF_{<\omega,\kappa}(\calA)$ be the set of all finite partial functions with domain included in $\calA$ and range the set $\{0,1\}$. For each $h\in\FF_{<\omega,\kappa}(\calA)$ let $\calA^h=\bigcap\{\calA^{h(A)}:A\in\dom(h)\}$
where $\calA^{h(A)}=A$ if $h(A)=0$ and $\calA^{h(A)}=\kappa\backslash A$
if $h(A)=1$.  We refer to sets of the form $\calA^h$ as boolean combinations. 
\end{defi}

With this we can state the definition of $\kappa$-independence. For a discussion of the most general definition in which the boolean combinations are generated by arbitrarily large subfamilies of the given family, see~\cite{VFME}.

\begin{defi} $ $
	\begin{enumerate}
		\item A family $\calA\subseteq [\kappa]^\kappa$ is said to be $\kappa$-independent if for each $h\in\FF_{<\omega,\kappa}(\calA)$ the set $\calA^h$ is unbounded. It is said to be a maximal $\kappa$-independent family if it is $\kappa$-independent and maximal under inclusion. 
		\item 	The least size of a maximal $\kappa$-independent family
	is denoted $\mathfrak{i}(\kappa)$. 
\end{enumerate}
\end{defi}

\begin{remark}
	For $\kappa=\omega$ the above notions coincides with the classical notions of independence on $[\omega]^\omega$ and $\mathfrak{i}(\kappa)=\mathfrak{i}$, where $\mathfrak{i}$ is the classical independence number.  
\end{remark}

\begin{lem}\label{classic}
	Let $\kappa$ be a regular infinite cardinal. Then
	\begin{enumerate}
		\item Every $\kappa$-independent family is contained in a maximal $\kappa$-independent family.
		\item $\kappa^+\leq\mathfrak{r}(\kappa)\leq\mathfrak{i}(\kappa)$
		\item There is a maximal $\kappa$-independent family of cardinality $2^\kappa$.
		\item $\mathfrak{d}(\kappa)\leq\mathfrak{i}(\kappa)$. 
	\end{enumerate}
\end{lem}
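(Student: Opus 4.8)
The plan is to prove (1)--(3) by direct combinatorial arguments and to isolate (4) as the one substantial point. For part (1) I would invoke Zorn's lemma, the only verification being that the union of a $\subseteq$-chain (indeed of any $\subseteq$-directed family) of $\kappa$-independent families is again $\kappa$-independent. This holds because $\kappa$-independence is a property of \emph{finite} subfamilies $\calB,\calC$, and any finite $\calB\cup\calC$ already lies inside a single member of the chain, which witnesses that $\bigcap\calB\setminus\bigcup\calC$ is unbounded. Hence every chain has an upper bound and a maximal $\kappa$-independent family extending the given one exists.

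For part (2): to see $\kappa^+\le\frr(\kappa)$, I show any $\{R_\alpha:\alpha<\kappa\}\subseteq[\kappa]^\kappa$ is split by a single set. Enumerate $\kappa\times 2$ so that each pair $(\alpha,i)$ occurs cofinally and run a recursion of length $\kappa$: at the stage labelled $(\alpha,i)$ pick a point of $R_\alpha$ not yet committed (possible, since fewer than $\kappa$ points have been used while $|R_\alpha|=\kappa$) and place it in $X$ iff $i=0$. Then each $R_\alpha$ meets both $X$ and $\kappa\setminus X$ in a set of size $\kappa$, so $X$ splits all $R_\alpha$ and no family of size $\le\kappa$ is unreaped. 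For $\frr(\kappa)\le\fri(\kappa)$, fix a maximal $\kappa$-independent $\calA$ with $|\calA|=\fri(\kappa)$ and let $\calD=\{\calA^h:h\in\FF_{<\omega,\kappa}(\calA)\}$, so $\calD\subseteq[\kappa]^\kappa$ and $|\calD|=|\calA|$. If some $X$ split every member of $\calD$, then for each $h$ both $\calA^h\cap X$ and $\calA^h\setminus X$ would be unbounded, which is exactly the assertion that $\calA\cup\{X\}$ is $\kappa$-independent; as $X$ does not split itself we get $X\notin\calD\supseteq\calA$, so $\calA\cup\{X\}$ properly extends $\calA$, contradicting maximality. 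Thus $\calD$ is unreaped and $\frr(\kappa)\le|\calD|=\fri(\kappa)$.

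For part (3) I would generalize the classical example. Let $I=\{(s,\calS):s\in[\kappa]^{<\omega},\ \calS\subseteq\mathcal P(s)\}$; since $s$ is finite, $|I|=\kappa$, and I fix a bijection $I\cong\kappa$. For $A\subseteq\kappa$ put $x_A=\{(s,\calS)\in I:A\cap s\in\calS\}$; the $x_A$ are pairwise distinct because distinct $A$ differ at some coordinate. Given distinct $A_1,\dots,A_n$ and a sign pattern, choose a finite $s_0$ on which their traces are pairwise distinct; for every finite $s\supseteq s_0$ there is $\calS\subseteq\mathcal P(s)$ containing exactly the prescribed ``positive'' traces, producing a point $(s,\calS)$ of the corresponding boolean combination. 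As there are $\kappa$ many such $s$, every finite boolean combination has size $\kappa$, hence is unbounded, so $\{x_A:A\subseteq\kappa\}$ is $\kappa$-independent of size $2^\kappa$; extending it via part (1) inside $\mathcal P(I)$ yields a maximal $\kappa$-independent family of size exactly $2^\kappa$.

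Part (4) is the real work, and I expect it to be the main obstacle. The aim is to extract from a maximal $\kappa$-independent $\calA$ with $|\calA|=\fri(\kappa)$ a dominating family in $({}^\kappa\kappa,\le^*)$ of size $|\calA|$. The naive idea of enumerating the boolean combinations is hopeless: typical finite boolean combinations have positive density and enumerate essentially linearly, dominating nothing. Instead I would argue by contradiction, assuming $|\calA|<\frd(\kappa)$ and producing a set $X$ that splits every boolean combination, so that $\calA\cup\{X\}$ remains $\kappa$-independent and contradicts maximality. Fix distinct $\{A_m:m<\kappa\}\subseteq\calA$ (available since $|\calA|\ge\kappa^+$) and an increasing interval partition $\langle[a_m,a_{m+1}):m<\kappa\rangle$ still to be chosen, and set $X=\bigcup_{m<\kappa}\big(A_m\cap[a_m,a_{m+1})\big)$. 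For a boolean combination $\calA^h$ and any $m$ with $A_m\notin\dom h$ (all but finitely many $m$), independence splits $\calA^h$ across the $m$-th block into $\calA^h\cap A_m$, which falls into $X$, and $\calA^h\setminus A_m$, which falls outside $X$; hence $X$ splits $\calA^h$ as soon as each of these two cells meets its block $[a_m,a_{m+1})$ for cofinally many $m$. The crux is to choose the breakpoints so that this cofinal-meeting property holds simultaneously for all $<\frd(\kappa)$ boolean combinations. This is precisely the delicate alignment between a prescribed interval partition and the blocks carved out by $\calA$ that distinguishes $\frd$ from $\frb$, and I would handle it through the interval-partition characterization of $\frd(\kappa)$: fewer than $\frd(\kappa)$ interval partitions cannot be cofinal in the domination order, and the resulting undominated partition supplies blocks long enough to capture the relevant cells. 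Making this capture argument precise is where essentially all the difficulty of (4) resides.
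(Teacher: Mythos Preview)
Your arguments for (1)--(3) are correct and in fact more detailed than the paper's own proof, which merely notes that increasing unions of $\kappa$-independent families are $\kappa$-independent, observes that the family of boolean combinations of a maximal $\calA$ is unreaped, and for (3) simply cites~\cite[Theorem 4.2]{SG}. Your explicit Hausdorff-type construction in (3) with finite $s\in[\kappa]^{<\omega}$ is the right one here, since only finite boolean combinations are at stake; the paper reserves the strongly inaccessible version (with $\gamma<\kappa$ in place of finite $s$) for the strongly independent case in Theorem~\ref{strong_classic}.

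For (4), the paper gives no argument at all: it just asserts that the proof ``follows closely the proof of the classical case, i.e. $\mathfrak d\le\mathfrak i$'' and cites~\cite{LH}. Your sketch reproduces exactly that classical strategy (the Shelah construction $X=\bigcup_m A_m\cap[a_m,a_{m+1})$), and you have correctly isolated where the work lies. What remains is genuinely the technical heart of the proof, not a routine detail: one needs, for each $h$, \emph{cofinally many} $m$ at which $[a_m,a_{m+1})$ meets both $\calA^h\cap A_m$ and $\calA^h\setminus A_m$, and this is obtained not by dominating the associated functions (which would need $|\calA|<\mathfrak b(\kappa)$) but by using that a non-dominating family of size $<\mathfrak d(\kappa)$ admits a single $g$ exceeding each member unboundedly often. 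Concretely, set $F_h(\alpha)$ to be least so that $[\alpha,F_h(\alpha))$ meets $\calA^h\cap A_{m'}$ and $\calA^h\setminus A_{m'}$ for every $m'\le\alpha$ with $A_{m'}\notin\dom h$; pick $g$ with $\{\alpha:g(\alpha)>F_h(\alpha)\}$ unbounded for each $h$; build the $a_m$ from iterates of $g$ and use a two-block overlap argument to push the witnesses into the correct interval. This is precisely the ``delicate alignment'' you flag, and your diagnosis that it distinguishes $\mathfrak d$ from $\mathfrak b$ is exactly right.
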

\begin{proof} Since the increasing union of a collection of $\kappa$-independent families is $\kappa$-independent, by the Axiom of Choice every $\kappa$-independent family is contained in a maximal one.  Note that if
$\calA$ is a maximal $\kappa$-independent family, then the set of 
boolean combinations $\{\calA^h: h\in\FF_{<\omega,\kappa}(\calA)\}$ is not split and so $\mathfrak{r}(\kappa)\leq |\calA|$. For a construction of a $\kappa$-independent family of cardinality $2^\kappa$, see ~\cite[Theorem 4.2]{SG}. Finally, the proof that $\mathfrak{d}(\kappa)\leq\mathfrak{i}(\kappa)$ follows closely the proof of the classical case, i.e. $\mathfrak{d}\leq\mathfrak{i}$ (see~\cite{LH}).
\end{proof}

One of the most interesting open questions, regarding the classical cardinal characteristics is the consistency of $\mathfrak{i}<\mathfrak{a}$. By the last item of the above theorem and the fact that if  $\mathfrak{d}(\kappa)=\kappa^+$ implies that $\mathfrak{a}(\kappa)=\kappa^+$ (see~\cite{BHZ} and~\cite{DRSS}), we obtain the following:

\begin{cor} Let $\kappa$ be a regular uncountable cardinal. Then if $\mathfrak{i}(\kappa)=\kappa^+$ then $\mathfrak{a}(\kappa)=\kappa^+$.
\end{cor}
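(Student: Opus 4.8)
The plan is to sandwich $\mathfrak{d}(\kappa)$ between $\kappa^+$ and $\mathfrak{i}(\kappa)$ and then transfer the conclusion to $\mathfrak{a}(\kappa)$ using a result already available in the literature. The corollary is a short consequence of Lemma~\ref{classic} rather than a new construction, so essentially all the effort lies in assembling the right chain of inequalities.

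First I would record the standard lower bound $\kappa^+\leq\mathfrak{d}(\kappa)$, valid for every regular uncountable $\kappa$. This is a routine diagonalization: given any family of at most $\kappa$ functions in ${}^\kappa\kappa$, enumerate it as $\{f_\alpha:\alpha<\kappa\}$ and set $g(\beta)=\sup\{f_\alpha(\beta)+1:\alpha\leq\beta\}$; regularity of $\kappa$ keeps each supremum below $\kappa$, and $g$ is eventually above every $f_\alpha$, so no such family can be dominating. Next I would invoke Lemma~\ref{classic}(4), which gives $\mathfrak{d}(\kappa)\leq\mathfrak{i}(\kappa)$. Under the hypothesis $\mathfrak{i}(\kappa)=\kappa^+$, combining these two bounds yields $\kappa^+\leq\mathfrak{d}(\kappa)\leq\mathfrak{i}(\kappa)=\kappa^+$, so that $\mathfrak{d}(\kappa)=\kappa^+$.

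Finally I would apply the cited implication (from~\cite{BHZ} and~\cite{DRSS}) that $\mathfrak{d}(\kappa)=\kappa^+$ entails $\mathfrak{a}(\kappa)=\kappa^+$; substituting the value just obtained gives $\mathfrak{a}(\kappa)=\kappa^+$, as required. The only genuinely nontrivial ingredient is this last implication, which is imported wholesale rather than reproved here and is where all the real content sits; the remaining steps are bookkeeping around the definitions of the characteristics involved. Consequently there is no serious obstacle internal to the corollary itself—the difficulty has been off-loaded to the cited theorem on $\mathfrak{d}(\kappa)$ and $\mathfrak{a}(\kappa)$.
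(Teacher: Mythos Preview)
Your argument is correct and is exactly the approach the paper takes: combine $\mathfrak{d}(\kappa)\leq\mathfrak{i}(\kappa)$ from Lemma~\ref{classic}(4) with the cited implication $\mathfrak{d}(\kappa)=\kappa^+\Rightarrow\mathfrak{a}(\kappa)=\kappa^+$ from~\cite{BHZ,DRSS}. The only addition you make is spelling out the standard lower bound $\kappa^+\leq\mathfrak{d}(\kappa)$, which the paper leaves implicit.
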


\section{Adjoining a maximal $\kappa$-independent family}\label{the_poset}

In this section, we provide a partial order which adjoins a maximal $\kappa$-independent family, family which we will later show to be indestructible by products of $\kappa$-Sacks forcing. 

Let $\kappa$ be a measurable cardinal and $\calU$ a normal measure on $\kappa$. 

\begin{defi} Let $\mathbb{P}_\calU$ be the poset of all pairs $(\calA,A)$ where  $\calA$ is a $\kappa$-independent family of cardinality $\kappa$ and $A\in\calU$ has the property that $\forall h\in\FF_{<\omega,\kappa}(\calA)$ the set 
	$\calA^h\cap A$ is unbounded. The extension relation is defined as follows: $(\calA_1,A_1)\leq (\calA_0,A_0)$ if and only if $\calA_1\supseteq \calA_0$ and $A_1\subseteq^* A_0$.\footnote{Throughout $A\subseteq^* B$ means $|A\backslash B|<\kappa$.}
\end{defi}

\begin{lem} Assume $2^\kappa=\kappa^+$. Then $\bbP_\calU$ is $\kappa^+$-closed and $\kappa^{++}\hbox{-cc}$.
\end{lem}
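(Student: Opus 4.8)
The plan is to treat the two assertions separately, since they rely on completely different features of $\bbP_\calU$. For the $\kappa^{++}$-cc I would simply bound the number of conditions. A condition $(\calA,A)$ is determined by a size-$\kappa$ subfamily $\calA\subseteq[\kappa]^\kappa$ together with a set $A\in\calU\subseteq\mathcal{P}(\kappa)$. Now $|[\kappa]^\kappa|=2^\kappa=\kappa^+$, and using $2^\kappa=\kappa^+$ one computes $(\kappa^+)^\kappa=(2^\kappa)^\kappa=2^{\kappa\cdot\kappa}=2^\kappa=\kappa^+$, so there are only $\kappa^+$ many candidate families $\calA$; there are likewise at most $2^\kappa=\kappa^+$ choices for $A$. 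Hence $|\bbP_\calU|\le\kappa^+$, and a poset of size $\le\kappa^+$ cannot contain an antichain of size $\kappa^{++}$. This is the only place the hypothesis $2^\kappa=\kappa^+$ enters, and the argument is entirely routine.

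For $\kappa^+$-closure let $\langle(\calA_\xi,A_\xi):\xi<\delta\rangle$ be descending with $\delta<\kappa^+$, i.e. $\delta\le\kappa$. Successor $\delta$ are immediate (the last condition is already a lower bound), so assume $\delta$ is a limit. Set $\calA=\bigcup_{\xi<\delta}\calA_\xi$; since the $\calA_\xi$ increase and every $h\in\FF_{<\omega,\kappa}(\calA)$ has finite domain, $\dom(h)\subseteq\calA_{\xi_0}$ for some $\xi_0<\delta$, whence $\calA$ is again $\kappa$-independent and $|\calA|=\kappa$. For the second coordinate I would take $A=\bigcap_{\xi<\delta}A_\xi$ if $\delta<\kappa$ and $A=\DiagInt_{\xi<\delta}A_\xi$ if $\delta=\kappa$. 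In the first case $A\in\calU$ by the $\kappa$-completeness of $\calU$; in the second case $A\in\calU$ by normality. In both cases $A\subseteq^* A_\xi$ for all $\xi<\delta$ (for the diagonal intersection one checks $A\setminus A_\eta\subseteq\eta+1$, using regularity of $\kappa$). Thus $(\calA,A)\le(\calA_\xi,A_\xi)$ for every $\xi$, provided $(\calA,A)$ is itself a condition.

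The heart of the matter—and the step I expect to be the genuine obstacle—is verifying that $\calA^h\cap A$ remains unbounded for every $h\in\FF_{<\omega,\kappa}(\calA)$; membership $A\in\calU$ is already secured. Fixing $h$ with $\dom(h)\subseteq\calA_{\xi_0}$, we know $\calA^h\cap A_\xi$ is unbounded for all $\xi\ge\xi_0$, but an intersection of many unbounded sets can collapse, so this does not transfer formally to $A$ and must be extracted from the measure together with the $\subseteq^*$-decreasing structure of the witnesses. The key observation I would use is that, because $A_\eta\subseteq^* A_\xi$ whenever $\xi\le\eta$ and $\kappa$ is regular, one has $A_\rho=^*\bigcap_{\xi\le\rho}A_\xi$ for each $\rho<\kappa$; hence the unbounded set $\calA^h\cap A_\rho$ lies, modulo a bounded set, inside $\bigcap_{\xi\le\rho}A_\xi$. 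Combining this with $\kappa$-completeness (for $\delta<\kappa$) and with the fact that, in the diagonal case, points chosen at ever-increasing depth $\rho$ can be absorbed into $A$ without disturbing $A\subseteq^* A_\xi$ or $A\in\calU$, one obtains cofinally many points of $\calA^h$ inside $A$. This measure-theoretic preservation of all finitely generated boolean combinations under passage to the limit witness is the sole nontrivial ingredient; granting it, $(\calA,A)$ is a condition below each $(\calA_\xi,A_\xi)$, so $\bbP_\calU$ is $\kappa^+$-closed.
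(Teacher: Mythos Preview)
Your treatment of the $\kappa^{++}$-cc is correct and is exactly the paper's argument: count conditions using $2^\kappa=\kappa^+$ to get $|\bbP_\calU|\le\kappa^+$.

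For $\kappa^+$-closure there is a real gap. You correctly isolate the nontrivial point---that $\calA^h\cap A$ must be unbounded for every $h\in\FF_{<\omega,\kappa}(\calA)$---but you then ``grant'' it rather than prove it, and your candidate $A$ (the diagonal intersection, or the full intersection when $\delta<\kappa$) need not have this property. A concrete obstruction in the diagonal case: take $\calA_\xi=\calA_0$ constant, fix $h$ with $S:=\calA_0^{h}\notin\calU$ (such $h$ exist since $\calU$ is an ultrafilter), enumerate $S=\{s_\xi:\xi<\kappa\}$ increasingly, and set $A_i=(\kappa\setminus S)\cup\{s_\xi:\xi\ge i\}$. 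One checks that each $(\calA_0,A_i)$ is a condition and the sequence is $\subseteq$-decreasing, yet $s_\xi\in\Delta_i A_i$ forces $\xi\ge i$ for all $i<s_\xi$, i.e.\ $s_\xi\le\xi+1$; for sparse $S$ this makes $S\cap\Delta_i A_i$ bounded. Normality and $\kappa$-completeness give $A\in\calU$, but they say nothing about unbounded intersection with sets outside $\calU$.

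Your parenthetical about points ``absorbed into $A$'' is in fact the right repair, but it has to be carried out, and this is precisely what the paper does. After forming $A'=\Delta_i A_i\in\calU$, one builds an auxiliary set $A''$ by bookkeeping: fix enumerations $\{h_{m,l}\}_{l<\kappa}$ of $\FF_{<\omega,\kappa}(\calA_m)$ for each $m$, and at stage $i<\kappa$ choose $k_{i,m,l}\in A_i\cap\calA^{h_{m,l}}$ for every pair $m,l<i$ (possible since $(\calA_i,A_i)$ is a condition and $\calA_m\subseteq\calA_i$). Then $A''=\{k_{i,m,l}:m,l<i<\kappa\}$ meets every boolean combination of $\calA$ on an unbounded set, and $A''\setminus A_\gamma\subseteq\{k_{i,m,l}:i<\gamma\}$ is bounded, so $A''$ is a genuine pseudo-intersection of the $A_i$'s. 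Taking $A=A'\cup A''$ now gives $A\in\calU$ (from $A'$), $A\subseteq^* A_\xi$ for all $\xi$ (both pieces satisfy this), and $\calA^h\cap A$ unbounded for all $h$ (from $A''$). Your sketch needs this explicit construction; the measure alone does not produce it.
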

\begin{proof} Let $\{(\calA_i,A_i)\}_{i\in\kappa}$ be a decreasing sequence in $\bbP_\calU$. We can assume that $\{A_i\}_{i\in\kappa}$ is strictly decreasing, i.e for each $i>j$ we have $A_j\subseteq A_i$. Then $\calA=\bigcup_{i\in\kappa}\calA_i$ is an independent family of cardinality $\kappa$ and the diagonal intersection $A'=\Delta_{i\in\kappa}A_i\in\calU$. 
	
Now, for each $i\in\kappa$, let $\{h_{i,j}\}_{j\in\kappa}$ enumerate $\FF_{<\omega,\kappa}(\calA)$. Recursively we will define a set $A''=\{k_{i,l,m}\}_{l,m<i; i<\kappa}$ which is a pseudo-intersection of $\{A_i\}_{i\in\kappa}$ and which meets every boolean combination $\calA^h$ for $h\in\FF_{<\omega,\kappa}(\calA)$ on an unbounded set.  Then $A=A'\cup A''$ is an element of $\calU$ and $(\calA,A)\in\bbP_\calU$ is a common extension of $\{(\calA_i,A_i)\}_{i\in\kappa}$.
	
{\emph{Construction of $A''$:}} At step $i$ pick $k_{i,m,l}\in A_i\cap \calA_i^{h_{m,l}}$ for each $m,l<i$. Then in particular $k_{i,m,l}\in A_m$ for each $m\leq i$ and $k_{i,m,l}\in\calA_m^{h_{m,l}}$ for each $m,l < i$.
Take $A''=\{k_{i,m,l}\}_{m,l<i;i<\kappa}$. Then $A''$ meets every boolean combination on an unbounded set and is a pseudo-intersection. Fix $\gamma\in\kappa$. Then for all $\xi$ such that $\xi>\gamma$ and all $m,l<\xi$ we have that $k_{\xi,l,m}\in A_\xi\subseteq A_\gamma$. Thus 
$A''\backslash A_\gamma\subseteq \{k_{\xi,l,m}\}_{\xi<\gamma; l,m<\xi}$, which is a bounded set. 

The poset has the $\kappa^{++}$-cc, because $|\bbP_\calU|=\kappa^+$. Indeed, $\Big|\big[[\kappa]^\kappa\big]^\kappa\Big|=\kappa^+$.
\end{proof}

\begin{lem}\label{small} If $(\calA,A)\in\bbP_\calU$, then there is $B\notin\calA$ such that $B\subseteq A$ and $(\calA\cup\{B\},A)\leq (\calA,A)$.
\end{lem}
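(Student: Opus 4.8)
The plan is to adjoin $B$ by a transfinite recursion of length $\kappa$ that diagonalizes against all boolean combinations of $\calA$, arranging that both $B$ and $\kappa\setminus B$ meet each of them unboundedly while staying inside $A$. Since $\calA\cup\{B\}\supseteq\calA$ and $A\subseteq^* A$ hold automatically, the pair $(\calA\cup\{B\},A)$ will be an extension of $(\calA,A)$ as soon as it lies in $\bbP_\calU$; and as the boolean combinations of $\calA\cup\{B\}$ are precisely the old $\calA^h$ together with $\calA^h\cap B$ and $\calA^h\setminus B$ for $h\in\FF_{<\omega,\kappa}(\calA)$, it suffices to ensure that $\calA^h\cap B\cap A$ and $\calA^h\cap(\kappa\setminus B)\cap A$ are unbounded for every such $h$.

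First I would record that $|\FF_{<\omega,\kappa}(\calA)|=\kappa$, since $|\calA|=\kappa$ and its elements are finite partial functions, and fix an enumeration $\langle h_\alpha:\alpha<\kappa\rangle$ of $\FF_{<\omega,\kappa}(\calA)$ in which every element occurs cofinally often (e.g.\ via a bijection $\kappa\times\kappa\to\kappa$). The cofinal repetition is essential: a single-pass enumeration hitting each $h$ once would leave $\calA^h\cap B$ bounded, so we really need each combination to recur $\kappa$ times in order to split it into two pieces of full size.

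Then I would run the recursion. At stage $\alpha$, with $x_\beta,y_\beta$ chosen for $\beta<\alpha$, set $\gamma_\alpha=\sup\{x_\beta,y_\beta:\beta<\alpha\}+1$, which is below $\kappa$ by the regularity of $\kappa$. As $(\calA,A)\in\bbP_\calU$, the set $\calA^{h_\alpha}\cap A$ is unbounded, so I can choose distinct $x_\alpha,y_\alpha\in\calA^{h_\alpha}\cap A$ with $x_\alpha,y_\alpha\geq\gamma_\alpha$, putting $x_\alpha\in B$ and keeping $y_\alpha$ out of $B$. Setting $B=\{x_\alpha:\alpha<\kappa\}$, one has $B\subseteq A$ and $|B|=\kappa$ since the $x_\alpha$ are strictly increasing. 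For a fixed $h$, the cofinally many stages $\alpha$ with $h_\alpha=h$ yield $\{x_\alpha:h_\alpha=h\}\subseteq\calA^h\cap B\cap A$ and $\{y_\alpha:h_\alpha=h\}\subseteq\calA^h\cap(\kappa\setminus B)\cap A$, both unbounded; the containment $y_\alpha\notin B$ is checked by comparing $y_\alpha$ with each $x_\beta$ (for $\beta<\alpha$ one has $x_\beta<\gamma_\alpha\leq y_\alpha$, the case $\beta=\alpha$ is $x_\alpha\neq y_\alpha$, and for $\beta>\alpha$ one has $x_\beta\geq\gamma_\beta>y_\alpha$). This simultaneously gives the $\kappa$-independence of $\calA\cup\{B\}$ and the filter clause of $\bbP_\calU$; moreover $B\notin\calA$, since otherwise, writing $B=A'\in\calA$ and taking $h=\{(A',0)\}$, the combination $\calA^h\setminus B=A'\setminus A'$ would be empty, contradicting its unboundedness.

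The one genuinely delicate point is the bookkeeping from the second paragraph together with the regularity of $\kappa$ used to keep the partial suprema $\gamma_\alpha$ below $\kappa$; granting these, the verification in the third paragraph is a routine diagonalization.
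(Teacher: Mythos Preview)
Your proof is correct and follows essentially the same diagonalization as the paper: both pick two points from each $\calA^{h_\alpha}\cap A$, placing one in $B$ and keeping the other out. The only cosmetic difference is in bookkeeping---you enumerate $\FF_{<\omega,\kappa}(\calA)$ with cofinal repetition, whereas the paper uses a single enumeration and instead observes that every $h$ has unboundedly many \emph{extensions} $h_i$ in the list (so $k_{i,0}\in\calA^{h_i}\subseteq\calA^h$); your explicit verification that $B\notin\calA$ is a nice addition the paper leaves implicit.
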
	
\begin{proof}
Let $\{h_i\}_{i\in\kappa}$ be a fixed enumeration of $\FF_{<\omega,\kappa}(\calA)$. Since $\calA^{h_0}\cap A$ is unbounded, we can find distinct $k_{0,0}$, $k_{0,1}$ in  $\calA^{h_0}\cap A$. Suppose we have defined $\{k_{i,j}: i\in\gamma, j\in 2\}$ distinct. Since $\calA^{h_\gamma}\cap A$ is unbounded, we can find distinct $k_{\gamma,0}, k_{\gamma,1}$ in $(\calA^{h_\gamma}\cap A)\backslash \{k_{i,j}: i\in\gamma, j\in 2\}$. Finally, take $B=\{k_{i,0}\}_{i\in\kappa}$. Clearly $B\subseteq A$ and $\calA\cup\{B\}$ is independent. To verify the latter note that for each  $h\in\FF_{<\omega,\kappa}(\calA)$ there are unboundedly many $h_i\supseteq h$.  Then for unboundedly many $i\in\kappa$, $k_{i,0}\in \calA^{h_i}\cap B\subseteq \calA^h\cap B$ and $k_{i,1}\in \calA^{h_i}\backslash B\subseteq \calA^h\backslash B$.  
\end{proof}

\begin{cor}\label{maximality} Let $G$ be $\bbP_\calU$-generic filter. Then $\calA_G=\bigcup\{\calA: \exists A\in\calU \hbox{ with }(\calA,A)\in G\}$ is a $\kappa$-maximal independent family.
\end{cor}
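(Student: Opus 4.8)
The plan is to show that $\calA_G$ is $\kappa$-independent and maximal, using the two lemmas just proved. For $\kappa$-independence, first I would observe that $\calA_G$ is the increasing union of the $\kappa$-independent families $\calA$ appearing in conditions of $G$. Since $G$ is a filter, any two conditions $(\calA_0,A_0),(\calA_1,A_1)\in G$ are compatible, hence their first coordinates are comparable members of a common lower bound; thus the collection $\{\calA:\exists A\,(\calA,A)\in G\}$ is directed under inclusion and its union $\calA_G$ is again $\kappa$-independent (any finite boolean combination $\calA_G^h$ already appears in some single $\calA$, where it is unbounded by definition of a condition).

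The core of the argument is maximality, for which I would use genericity together with Lemma~\ref{small}. Suppose toward a contradiction that some $\bbP_\calU$-name $\dot B$ is forced by a condition $(\calA,A)\in G$ to be a set in $[\kappa]^\kappa$ witnessing that $\calA_G$ is not maximal; that is, $(\calA,A)\forces$ ``$\dot B\notin\calA_G$ and $\calA_G\cup\{\dot B\}$ is $\kappa$-independent.'' The standard approach is to show this is impossible by a density argument: I would define, for each set $B\in V$ with $B\in[\kappa]^\kappa$, the set of conditions that already decide whether $B$ can be added, and argue that $B$ must be captured by the generic family. Concretely, the key density fact is that for any $B\in[\kappa]^\kappa$ and any $(\calA,A)\in\bbP_\calU$, either some extension forces $B$ into $\calA_G$, or some extension forces that adding $B$ destroys independence — i.e. that some finite boolean combination of elements of $\calA_G$ together with $B$ (or its complement) becomes bounded.

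The cleanest route is the following density lemma, which I expect to be the main obstacle: for every $B\in[\kappa]^\kappa\cap V$ the set
\[
D_B=\{(\calA,A)\in\bbP_\calU: B\in\calA \text{ or } \exists h\in\FF_{<\omega,\kappa}(\calA)\ (\calA^h\cap A\cap B \text{ or } \calA^h\cap A\setminus B \text{ is bounded})\}
\]
is dense. Given $(\calA,A)$ not already in $D_B$, every boolean combination $\calA^h$ still meets both $A\cap B$ and $A\setminus B$ unboundedly, so $\calA\cup\{B\}$ is $\kappa$-independent and $(\calA\cup\{B\},A)$ is a legitimate extension lying in $D_B$ via its first clause. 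Since $\bbP_\calU$ is $\kappa^+$-closed, no new subsets of $\kappa$ are added, so every $B\in[\kappa]^\kappa$ in the extension already lies in $V$; applying genericity to $D_B$ for this $B$, the generic filter meets $D_B$, which forces either $B\in\calA_G$ or that $B$ fails to independently extend $\calA_G$. In either case $\calA_G$ cannot be properly extended, establishing maximality.

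Finally I would assemble these pieces: $\kappa$-independence follows from directedness and Lemma~\ref{small} guaranteeing the generic is nontrivial and unbounded in each coordinate, while maximality follows from the density of $D_B$ together with $\kappa^+$-closure pinning all candidate sets $B$ into the ground model. The one technical point requiring care is the verification that the extension $(\calA\cup\{B\},A)$ genuinely lies in $\bbP_\calU$ — that is, that $\calA\cup\{B\}$ has cardinality $\kappa$ and that $A\in\calU$ still meets every boolean combination of the enlarged family unboundedly — but this is exactly the content of the non-membership hypothesis defining $D_B$, so the argument closes.
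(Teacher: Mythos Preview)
Your overall strategy matches the paper's: use $\kappa^+$-closure to place the candidate $B$ in the ground model, then argue by a dichotomy that either $B$ gets absorbed into $\calA_G$ or some boolean combination with $B$ becomes bounded. The density of your $D_B$ is correct, and the paper's proof is essentially this same argument unwound as a direct contradiction.

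There is, however, a genuine gap in how you extract maximality from meeting $D_B$. Suppose $G$ meets $D_B$ at a condition $(\calA,A)$ via the second clause, say $\calA^h\cap A\cap B$ is bounded for some $h\in\FF_{<\omega,\kappa}(\calA)$. You conclude that ``$B$ fails to independently extend $\calA_G$,'' but this does not follow: the set $A$ is only the side condition, not a member of $\calA_G$, so $\calA_G^h\cap B=\calA^h\cap B$ may well be unbounded even though $\calA^h\cap A\cap B$ is bounded. You have not produced any $h'\in\FF_{<\omega,\kappa}(\calA_G)$ with $\calA_G^{h'}\cap B$ bounded.

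This is exactly where Lemma~\ref{small} enters in the paper's proof, and it is needed for maximality, not (as you suggest) for independence. Below $(\calA,A)$ one finds $C\subseteq A$ with $(\calA\cup\{C\},A)\leq(\calA,A)$; then $h'=h\cup\{(C,0)\}\in\FF_{<\omega,\kappa}(\calA_G)$ and $\calA_G^{h'}\subseteq\calA^h\cap A$, so $\calA_G^{h'}\cap B$ is bounded. Either incorporate this step into your density argument (e.g.\ by strengthening the second clause of $D_B$ to require that some $C\in\calA$ satisfy $C\subseteq^* A$ and $\calA^h\cap C\cap B$ be bounded), or invoke Lemma~\ref{small} after meeting $D_B$; without it the argument does not close.
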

\begin{proof} Suppose $X\in [\kappa]^\kappa\setminus\calA_G$ and $\calA_G\cup\{X\}$ is independent. Take $(\calA,A)\in G$ such that 
	$$(\calA,A)\Vdash ``\calA_G\cup\{X\}\hbox{ is independent and }X\notin \calA_G".$$
Since $\bbP_\calU$ is $\kappa^+$-closed, the set $X$ belongs to the ground model. Now, if for each $h\in\FF_{<\omega,\kappa}(\calA)$ the intersections $\calA^h\cap X\cap A$ and $\calA^h\cap A\cap X^c$ are unbounded, then $(\calA\cup\{X\},A)\leq (\calA,A)$ and $$(\calA\cup\{X\},A)\Vdash ``X\in\calA_G",$$ which is a contradiction. Therefore there is $h\in\FF_{<\omega,\kappa}(\calA)$ such that either $\calA^h\cap A\cap X$ or $\calA^h\cap A\cap X^c$ is bounded. However, by Lemma~\ref{small}, there is $B\notin \calA$ such that $B\subseteq A$ and $(\calA\cup\{B\},A)\leq (\calA,A)$. But then, 
	$$(\calA\cup\{B\},A)\Vdash ``\exists h\in\FF_{<\omega,\kappa}(\calA_G)\hbox{ such that }\calA^h_G\cap X\hbox{ or }\calA^h_G\setminus X \hbox{ is bounded}."$$
	Therefore $(\calA\cup\{B\}, A)\Vdash ``\calA_G\cup\{X\}\hbox{ is not independent}"$, which is a contradiction.
\end{proof}

\begin{remark} Given a measurable cardinal $\kappa$ and a normal measure $\calU$ on $\kappa$, whenever $\calA=\calA_G$ is the generic maximal $\kappa$-independent family given by a $\mathbb{P}_\calU$-generic  filter $G$, we will say that $\calA$ is $\calU$-supported.
\end{remark}

\section{The Density Ideal}\label{density_ideal_section}

The density ideal (see~\cite{VFDM1}) plays an important roles in describing the properties of maximal independent families on $\omega$. A higher analogue of this notion will play an equally important role in the study of maximal $\kappa$-independent families indestructible by $\kappa$-Sacks forcing.

\begin{defi}\label{def_density_ideal}
Let $\calA$ be a $\mathcal{U}$-supported independent family. The density ideal $\id_{<\omega,\kappa}(\calA)$ is the ideal of all $X\in \calU^*$, where $\calU^*$ is the dual ideal of $\calU$, such that $\forall h\in\FF_{<\omega,\kappa}(\calA)$ there is $h'\in\FF_{<\omega,\kappa}(\calA)$ such that $h'\supseteq h$ and $\calA^{h'}\cap X=\emptyset$. 
\end{defi}	

\begin{lem}\label{monotonicity} $ $
	\begin{enumerate}
		\item If $\calA$ be an independent family, then $\id_{<\omega, \kappa}(\calA)$ is an ideal. 
		\item If $\calA_0,\calA_1$ are independent families such that $\calA_0\subseteq \calA_1$, then $\id_{<\omega,\kappa}(\calA_0)\subseteq\id_{<\omega,\kappa}(\calA_1)$.
	\end{enumerate}
\end{lem}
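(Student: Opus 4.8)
The plan is to verify the ideal axioms for part (1) directly from the definition, and to prove part (2) by a restriction-and-gluing argument that exploits the fact that a boolean combination depends only on the partial function $h$ and not on the ambient family. Throughout I would use two elementary observations. First, for $h\in\FF_{<\omega,\kappa}(\calA_0)\subseteq\FF_{<\omega,\kappa}(\calA_1)$ the boolean combination $\calA^h=\bigcap\{A^{h(A)}:A\in\dom(h)\}$ is computed identically whether the ambient family is $\calA_0$ or $\calA_1$, so I may write $\calA^h$ unambiguously. Second, if $h'\supseteq h$ then $\calA^{h'}\subseteq\calA^h$; that is, passing to a longer condition only shrinks the boolean combination, and hence can only help in making its intersection with a fixed $X$ empty. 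Note also that properness is automatic, since $\kappa\notin\calU^*$ and $\id_{<\omega,\kappa}(\calA)\subseteq\calU^*$.

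For part (1) I would check the three closure properties. Membership of $\emptyset$ is immediate, since $\emptyset\in\calU^*$ and $\calA^h\cap\emptyset=\emptyset$ for every $h$, so $h'=h$ witnesses the density condition. For downward closure, if $X\in\id_{<\omega,\kappa}(\calA)$ and $Y\subseteq X$, then $Y\in\calU^*$ because $\calU$ is upward closed, and any $h'$ witnessing $\calA^{h'}\cap X=\emptyset$ also witnesses $\calA^{h'}\cap Y=\emptyset$. The essential point is closure under finite unions. Given $X,Y\in\id_{<\omega,\kappa}(\calA)$, I first note $X\cup Y\in\calU^*$ since $\calU$ is a ($\kappa$-complete) filter. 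Then, given an arbitrary $h$, I would chain the two density conditions: apply the condition for $X$ to $h$ to obtain $h_1\supseteq h$ with $\calA^{h_1}\cap X=\emptyset$, and then apply the condition for $Y$ to $h_1$ to obtain $h_2\supseteq h_1$ with $\calA^{h_2}\cap Y=\emptyset$. Since $h_2\supseteq h_1$ gives $\calA^{h_2}\subseteq\calA^{h_1}$, the set $\calA^{h_2}$ misses both $X$ and $Y$, so $\calA^{h_2}\cap(X\cup Y)=\emptyset$ with $h_2\supseteq h$, as required.

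For part (2) I fix $X\in\id_{<\omega,\kappa}(\calA_0)$; since the dual ideal $\calU^*$ does not depend on the family, I only need to verify the density condition relative to $\calA_1$. Given $h\in\FF_{<\omega,\kappa}(\calA_1)$, I would split its domain as $D_0=\dom(h)\cap\calA_0$ and $D_1=\dom(h)\setminus\calA_0$, and set $h_0=h\restriction D_0\in\FF_{<\omega,\kappa}(\calA_0)$. Applying the density condition for $X$ in $\id_{<\omega,\kappa}(\calA_0)$ to $h_0$ yields $h_0'\supseteq h_0$ in $\FF_{<\omega,\kappa}(\calA_0)$ with $\calA^{h_0'}\cap X=\emptyset$. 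I then glue back the discarded part, setting $h'=h_0'\cup(h\restriction D_1)$; this is a well-defined finite partial function into $\calA_1$ because $\dom(h_0')\subseteq\calA_0$ is disjoint from $D_1$, and it extends $h$. Since $\calA^{h'}\subseteq\calA^{h_0'}$, we obtain $\calA^{h'}\cap X=\emptyset$, so $h'$ witnesses $X\in\id_{<\omega,\kappa}(\calA_1)$.

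The only genuinely delicate feature is the universal-existential shape of the density condition, which quantifies over every $h$ and then produces an extension $h'$; consequently both the finite-union argument of part (1) and the gluing argument of part (2) hinge on the monotonicity $\calA^{h'}\subseteq\calA^h$ under extension. I expect no serious obstacle beyond keeping the bookkeeping of the domains $D_0$ and $D_1$ straight in the gluing step, where one must check that the glued function genuinely extends $h$ and that its boolean combination is contained in $\calA^{h_0'}$.
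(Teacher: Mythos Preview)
Your proposal is correct and follows essentially the same approach as the paper: the same chaining argument for closure under unions in part~(1), and the same restrict--extend--glue construction for part~(2). You are in fact slightly more careful than the paper, explicitly verifying the $\calU^*$-membership clause and the trivial ideal axioms, which the paper leaves implicit.
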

\begin{proof}
	To prove item $(1)$ above consider any $X_0$ and $X_1$ in $\id_{<\omega,\kappa}(\calA)$. Fix any $h\in\FF_{<\omega,\kappa}(\calA)$. Then there is $h_0\supseteq h$ such that $\calA^{h_0}\cap X_0=\emptyset$ and there is $h_1\supseteq h_0$ such that $\calA^{h_1}\cap X_1=\emptyset$. But then $h_1\supseteq h$ and $\calA^{h_1}\cap (X_0\cup X_1)=\emptyset$. Clearly, $\id_{<\omega,\kappa}(\calA)$ is closed under subsets and thus $\id_{<\omega,\kappa}(\calA)$ is an ideal.
	
	To prove item $(2)$ consider any $X\in\id_{<\omega,\kappa}(\calA_0)$. Let $h\in\FF_{<\omega,\kappa}(\calA_1)$. Then $h'=h\upharpoonright \calA_0\in \FF_{<\omega,\kappa}(\calA_0)$ and by hypothesis there is $h_0$ in $\FF_{<\omega,\kappa}(\calA_0)$ extending $h'$ such that $\calA_0^{h_0}\cap X=\emptyset$. Let $h_1=h_0\cup h\upharpoonright (\calA_1\backslash\calA_0)$. Then 
	$\calA_1^{h_1}\cap X\subseteq \calA_0^{h_0}\cap X$ and so $\calA_1^{h_1}\cap X=\emptyset$. 
\end{proof}
	
\begin{remark} Note that $\id_{<\omega,\kappa}(\calA)$ is not necessarily $\kappa$-complete.
\end{remark}
	
\begin{lem}\label{generic_density_ideal} $\Vdash_{\mathbb{P_\calU}}\id_{<\omega,\kappa}(\calA_G)=\bigcup\{\id_{<\omega,\kappa}(\calA):\exists A (\calA,A)\in G\}$.
\end{lem}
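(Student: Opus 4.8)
The plan is to prove the equality by establishing both inclusions, using the genericity of $G$ together with the closure and monotonicity properties already developed. First I would observe that the right-hand side makes sense as a union over conditions $(\calA,A)\in G$ of the density ideals $\id_{<\omega,\kappa}(\calA)$, and that by Lemma~\ref{monotonicity}(2) this union is directed: if $(\calA_0,A_0),(\calA_1,A_1)\in G$ then by the directedness of the generic filter there is a common extension $(\calA_2,A_2)\in G$ with $\calA_0,\calA_1\subseteq\calA_2$, so $\id_{<\omega,\kappa}(\calA_0)\cup\id_{<\omega,\kappa}(\calA_1)\subseteq\id_{<\omega,\kappa}(\calA_2)$. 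Hence the right-hand side is automatically an ideal, matching the ideal structure of the left-hand side guaranteed by Lemma~\ref{monotonicity}(1).

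For the inclusion $\supseteq$, suppose $X\in\id_{<\omega,\kappa}(\calA)$ for some $(\calA,A)\in G$. Since $\calA\subseteq\calA_G$, Lemma~\ref{monotonicity}(2) gives $\id_{<\omega,\kappa}(\calA)\subseteq\id_{<\omega,\kappa}(\calA_G)$, so $X\in\id_{<\omega,\kappa}(\calA_G)$. This direction is essentially immediate from monotonicity and requires no genericity beyond the fact that $\calA\subseteq\calA_G$ for each $(\calA,A)\in G$.

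The harder inclusion is $\subseteq$, and this is where I would use genericity and $\kappa^+$-closure crucially. Suppose $X\in\id_{<\omega,\kappa}(\calA_G)$; I want to find a single condition $(\calA,A)\in G$ with $X\in\id_{<\omega,\kappa}(\calA)$. The key point is that $X\in\calU^*$ and the defining condition of the density ideal is a statement quantifying over all $h\in\FF_{<\omega,\kappa}(\calA_G)$. Since each such $h$ has finite domain contained in $\calA_G$, and $\calA_G$ is the increasing union of the $\calA$'s appearing in $G$, every individual $h$ already lives in some $\FF_{<\omega,\kappa}(\calA)$ with $(\calA,A)\in G$; moreover the witness $h'\supseteq h$ with $\calA_G^{h'}\cap X=\emptyset$ again has finite domain, hence lies in some $\FF_{<\omega,\kappa}(\calA')$ for a (possibly larger) condition in $G$. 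The plan is to argue by a density/genericity argument that there is a single condition $(\calA,A)\in G$ already deciding enough: specifically, I would show that the set of conditions $(\calA,A)$ forcing ``$X\in\id_{<\omega,\kappa}(\calA_G)$ fails or $X\in\id_{<\omega,\kappa}(\calA)$'' is dense below any condition forcing $X\in\id_{<\omega,\kappa}(\calA_G)$, so by genericity such a condition is in $G$.

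The main obstacle, and the step I would spend the most care on, is handling the quantifier complexity: membership of $X$ in the density ideal asserts that \emph{for every} $h$ there \emph{exists} an extension $h'$ killing $X$, and a priori the relevant $h'$ could be introduced by stages of the generic arbitrarily late, so no single ground-model condition obviously captures all witnesses simultaneously. To overcome this I would exploit $\kappa^+$-closure together with the structure of $\bbP_\calU$: given a condition $(\calA,A)$ forcing $X\in\id_{<\omega,\kappa}(\calA_G)$, the set $X$ and the family $\calA$ are in the ground model, and I would build a decreasing $\kappa^+$-sequence (or use a single sufficiently generic step) that absorbs, for each of the $\kappa$-many $h\in\FF_{<\omega,\kappa}(\calA)$, a witnessing extension $h'$ with $\calA^{h'}\cap X=\emptyset$, thereby producing a condition $(\calA^*,A)$ with $\calA\subseteq\calA^*$ such that $X\in\id_{<\omega,\kappa}(\calA^*)$ is forced outright and decided in the ground model. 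Closure under $\kappa^+$ is exactly what lets me take such a limit while staying in $\bbP_\calU$, and this is what makes the $\subseteq$ inclusion go through; I would then conclude that the condition in $G$ witnessing this membership is the desired $(\calA,A)$.
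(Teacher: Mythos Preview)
Your proposal is correct and follows essentially the same route as the paper: monotonicity (Lemma~\ref{monotonicity}(2)) for the inclusion $\supseteq$, and a closure argument using the $\kappa^+$-closure of $\bbP_\calU$ for the inclusion $\subseteq$. Two small points to tighten when you write it up: the decreasing sequence you build should have length $\kappa$ (not $\kappa^+$), since $\kappa^+$-closure only guarantees lower bounds for sequences of length $<\kappa^+$; and your bookkeeping must absorb witnesses for all $h\in\FF_{<\omega,\kappa}(\calA_\kappa)$ where $\calA_\kappa$ is the \emph{union} of the first coordinates along the sequence, not merely for $h$ in the initial $\calA$---this is what the paper arranges by its inductive construction, and it is exactly what is needed to conclude $X\in\id_{<\omega,\kappa}(\calA_\kappa)$.
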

\begin{proof} To see 
	$\Vdash_{\bbP_\calU}\bigcup\{\id_{<\omega,\kappa}(\calA):\exists A(\calA,A)\in G\}\subseteq \id_{<\omega,\kappa}(\calA_G)$ consider any $\bbP_\calU$-generic filter $G$. In $V[G]$ we have 
	$\calA_G=\bigcup\{\calA:\exists A(\calA,A)\in G\}$. Now for all $(\calA,A)\in G$, by Lemma~\ref{monotonicity}.(2), $\id_{<\omega,\kappa}(\calA)\subseteq\id_{<\omega,\kappa}(\calA_G)$.
	Therefore $\bigcup\{\id_{<\omega,\kappa}(\calA):\exists A(\calA,A)\in G\}\subseteq \id_{<\omega,\kappa}(\calA_G)$.
		
	The fact that $\Vdash_{\mathbb{P_\calU}}\id_{<\omega,\kappa}(\calA_G)\subseteq\bigcup\{\id_{<\omega,\kappa}(\calA):\exists\calA (\calA,A)\in G\}$ follows from the 
	$\kappa^+$-closure of $\bbP_\calU$. Consider any $p=(\calA,A)\in G$ and a $\bbP_\calU$-name $\dot{X}$ for a subset of $\kappa$ such that $p\Vdash \dot{X}\in\id_{<\omega,\kappa}(\calA_G)$. Fix $h\in\FF_{<\omega,\kappa}(\calA)$. Then 
	$$p\Vdash\exists h'\in\FF_{<\omega,\kappa}(\calA_G)(h\subseteq h'\hbox{ and }\calA_G^{h'}\cap X=\emptyset).$$ 
	Thus there is $(\calA',A')\leq (\calA,A)$ such that $h'\in\FF_{<\omega,\kappa}(\calA')$, $h'\supseteq h$ and $\calA^{h'}\cap X=\emptyset$. Proceed inductively to construct a decreasing sequence $\{(\calA_i,A_i)\}_{i\in\kappa}$ of conditions below $p$ such that if $\calA_\kappa=\bigcup_{i\in\kappa}\calA_i$ then for all 
	$h\in\FF_{<\omega,\kappa}(\calA_\kappa)$ there is $h'\in\FF_{<\omega,\kappa}(\calA_\kappa)$ extending $h$ and such that 
	$\calA^{h'}\cap X=\emptyset$. Thus $X\in\id_{<\omega,\kappa}(\calA_\kappa)$. By the $\kappa^+$-closure of $\bbP_\calU$, there is $p'=(\calB,B)\in\bbP_\calU$ which is an extension of	all $(\calA_i,A_i)$. Thus $X\in\id_{<\omega,\kappa}(\calB)$, $p'\leq p$ and $$p'\Vdash \dot{X}\in \bigcup\{\id_{<\omega,\kappa}(\calA): \exists A(\calA,A)\in G\}.$$
\end{proof}

\begin{lem}\label{small_sets}
		Let $(\calA,A)\in\mathbb{P}_\calU$ and let $X\in\id_{<\omega,\kappa}(\calA)$. Then $(\calA,A\backslash X)\in\mathbb{P}_\calU$. 	
\end{lem}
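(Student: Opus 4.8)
To show $(\calA,A\backslash X)\in\bbP_\calU$ I must verify the two defining clauses of the poset: first that $A\backslash X\in\calU$, and second that for every $h\in\FF_{<\omega,\kappa}(\calA)$ the boolean combination $\calA^h\cap(A\backslash X)$ is unbounded in $\kappa$. (The family $\calA$ itself is unchanged, so nothing needs to be checked about it.)

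The first clause is immediate. Since $X\in\id_{<\omega,\kappa}(\calA)\subseteq\calU^*$, by definition of the dual ideal we have $\kappa\backslash X\in\calU$. As $A\in\calU$ and $\calU$ is a ($\kappa$-complete, in particular finitely closed) filter, $A\backslash X=A\cap(\kappa\backslash X)\in\calU$.

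For the second clause, the idea is to exploit the density property of $X$ to replace $h$ by an extension whose boolean combination avoids $X$ entirely. Fix $h\in\FF_{<\omega,\kappa}(\calA)$. Since $X\in\id_{<\omega,\kappa}(\calA)$, there is $h'\in\FF_{<\omega,\kappa}(\calA)$ with $h'\supseteq h$ and $\calA^{h'}\cap X=\emptyset$. Because $h'$ extends $h$, adding further constraints only shrinks the combination, so $\calA^{h'}\subseteq\calA^h$; and from $\calA^{h'}\cap X=\emptyset$ we get $\calA^{h'}\subseteq\kappa\backslash X$. Combining these two inclusions yields
\[
\calA^{h'}\cap A\;\subseteq\;\calA^h\cap A\cap(\kappa\backslash X)\;=\;\calA^h\cap(A\backslash X).
\]
Now I invoke the hypothesis that $(\calA,A)$ is already a condition: since $h'\in\FF_{<\omega,\kappa}(\calA)$, the set $\calA^{h'}\cap A$ is unbounded. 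The displayed inclusion then forces $\calA^h\cap(A\backslash X)$ to be unbounded as well, which is exactly what is required. As $h$ was arbitrary, both clauses hold and $(\calA,A\backslash X)\in\bbP_\calU$.

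I do not anticipate a genuine obstacle here; the only subtlety worth flagging is getting the direction of the boolean-combination inclusion right (extending $h$ to $h'$ \emph{shrinks} $\calA^{h'}$ inside $\calA^h$), since the whole argument hinges on passing the unboundedness of $\calA^{h'}\cap A$ up to $\calA^h\cap(A\backslash X)$ rather than the other way around.
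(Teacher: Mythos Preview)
Your proof is correct and follows essentially the same argument as the paper's: pass from $h$ to an extension $h'$ with $\calA^{h'}\cap X=\emptyset$, then use that $\calA^{h'}\cap A$ is unbounded and contained in $\calA^h\cap(A\backslash X)$. You are slightly more thorough than the paper in explicitly verifying $A\backslash X\in\calU$ from $X\in\calU^*$, which the paper leaves implicit.
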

\begin{proof} It is sufficient to show that for each $h\in\FF_{<\omega,\kappa}(\calA)$ the set $\calA^h\cap(A\backslash X)$ is unbounded. Fix $h\in\FF_{<\omega,\kappa}(\calA)$. Since $X\in\id_{<\omega,\kappa}(\calA)$ there is $h'\supseteq h$, $h'\in\FF_{<\omega,\kappa}(\calA)$ extending $h$ such that $\calA^{h'}\cap X=\emptyset$. Thus $\calA^{h'}\subseteq\kappa\backslash X$. However 
	$$\calA^{h'}\cap A=(\calA^{h'}\cap A\cap X)\cup (\calA^{h'}\cap A\cap X^c).$$
Thus $\calA^{h'}\cap A=\calA^{h'}\cap A\cap X^c$ is unbounded. Therefore 
$(\calA,A\backslash X)$ is a condition.
\end{proof}

\begin{cor}\label{ideal_generators} Let $G$ be a $\bbP_\calU$-generic filter. Then in $V[G]$ the density ideal $\id(\calA_G)$ is generated by $\{\kappa\backslash A:\exists\calA(\calA,A)\in G\}$. That is 
	$$\Vdash_{\bbP_\calU}\id_{<\omega,\kappa}(\calA_G)=<\{\kappa\backslash A:\exists \calA(\calA,A)\in G\}>.$$
\end{cor}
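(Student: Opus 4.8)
The plan is to prove the asserted equality by establishing the two inclusions separately, writing $\calJ$ for the ideal generated by $\{\kappa\backslash A:\exists\calA\,(\calA,A)\in G\}$. Throughout I would work in $V[G]$ and use that, since $\bbP_\calU$ is $\kappa^+$-closed, every subset of $\kappa$ already lies in $V$ (so every $X$ under consideration is a ground-model set, just as in the proof of Corollary~\ref{maximality}). I would also invoke Lemma~\ref{generic_density_ideal} at the outset, which reduces the membership $X\in\id_{<\omega,\kappa}(\calA_G)$ to the membership $X\in\id_{<\omega,\kappa}(\calA)$ for a single condition $(\calA,A)\in G$.

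For the inclusion $\calJ\subseteq\id_{<\omega,\kappa}(\calA_G)$ it is enough, since the right-hand side is an ideal by Lemma~\ref{monotonicity}(1), to check that each generator $\kappa\backslash A$ (for $(\calA,A)\in G$) belongs to $\id_{<\omega,\kappa}(\calA_G)$. Certainly $\kappa\backslash A\in\calU^*$. Given any $h\in\FF_{<\omega,\kappa}(\calA_G)$, I would use Lemma~\ref{small} and genericity to locate a set $B\in\calA_G$ with $B\subseteq A$ and $B\notin\dom(h)$. Extending $h$ to $h'=h\cup\{(B,0)\}$ then forces $\calA_G^{h'}\subseteq B\subseteq A$, hence $\calA_G^{h'}\cap(\kappa\backslash A)=\emptyset$, which witnesses $\kappa\backslash A\in\id_{<\omega,\kappa}(\calA_G)$.

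For the reverse inclusion, fix $X\in\id_{<\omega,\kappa}(\calA_G)$ and, by Lemma~\ref{generic_density_ideal}, a condition $(\calA,A)\in G$ with $X\in\id_{<\omega,\kappa}(\calA)$. The key step is a density argument: I claim $D=\{(\calB,B)\leq(\calA,A):B\cap X\text{ is bounded}\}$ is dense below $(\calA,A)$. Indeed, given $(\calB,B)\leq(\calA,A)$, monotonicity (Lemma~\ref{monotonicity}(2)) gives $X\in\id_{<\omega,\kappa}(\calB)$, so Lemma~\ref{small_sets} yields $(\calB,B\backslash X)\in\bbP_\calU$; this condition extends $(\calB,B)$ and satisfies $(B\backslash X)\cap X=\emptyset$, so it lies in $D$. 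Since $(\calA,A)\in G$, genericity gives $(\calA',A')\in G\cap D$, that is, $X\cap A'$ is bounded. Hence $X\subseteq(\kappa\backslash A')\cup(X\cap A')$ is covered by the generator $\kappa\backslash A'$ together with a bounded set.

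To conclude I must absorb the bounded remainder $X\cap A'$, i.e. verify that bounded subsets of $\kappa$ lie in $\calJ$. For $Y\subseteq\gamma<\kappa$ this follows from a second, routine density argument: $\{(\calB,B):B\cap\gamma=\emptyset\}$ is dense, since given $(\calB,B)$ one may pass to $(\calB,B\backslash\gamma)$, a condition because deleting a bounded set preserves unboundedness of each $\calB^h\cap B$ and keeps $B\backslash\gamma\in\calU$. Thus $G$ meets it at some $(\calA'',A'')$ with $A''\cap Y=\emptyset$, giving $Y\subseteq\kappa\backslash A''\in\calJ$. Combining the pieces yields $X\in\calJ$, completing the equality. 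I expect the reverse inclusion — specifically, verifying the density of $D$ and thereby trapping an arbitrary element of $\id_{<\omega,\kappa}(\calA_G)$ below a single generator modulo a bounded set — to be the main point, whereas the forward inclusion and the handling of bounded sets are straightforward.
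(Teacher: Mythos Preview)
Your proof is correct and follows essentially the same route as the paper's: both inclusions rest on the same density arguments via Lemma~\ref{small_sets} (for $\id_{<\omega,\kappa}(\calA_G)\subseteq\calJ$) and Lemma~\ref{small} (for $\calJ\subseteq\id_{<\omega,\kappa}(\calA_G)$). The only difference is that you phrase the dense set $D$ with ``$B\cap X$ bounded'' rather than ``$B\cap X=\emptyset$'', which is an unnecessary weakening since your own witness $(\calB,B\backslash X)$ already has empty intersection with $X$ --- so the final paragraph absorbing bounded sets can simply be omitted.
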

\begin{proof}
Let $G$ be a $\bbP_\calU$-generic filter. By Lemma~\ref{generic_density_ideal},  $\id_{<\omega,\kappa}(\calA_G)=\bigcup\{\id_{<\omega,\kappa}(\calA):\exists A(\calA,A)\in G\}$.  Let $\calI_G$ be the ideal generated by 
	$\{\kappa\backslash A:\exists \calA(\calA,A)\in G\}$.
	
First we will show that $\id_{<\omega,\kappa}(\calA_G)\subseteq\calI_G$. 
Let $X\in\id_{<\omega,\kappa}(\calA_G)$. Thus there is $(\calA,A)\in G$ such that $X\in\id_{<\omega,\kappa}(\calA)$. However the set 
$D_X=\{(\calB,B)\in\bbP_\calU: X\cap B=\emptyset\}$ is dense below $(\calA,A)$
(indeed, if $(\calB,B)\leq (\calA,A)$ then $X\in\id_{<\omega,\kappa}(\calB)$ and by Lemma~\ref{small_sets} $(\calB,B\backslash X)\leq (\calB,B)$) and so there is $(\calB,B)\in G$ such that $X\cap B=\emptyset$. That is $X\subseteq \kappa\backslash B$ and so $X\in\calI_G$. 

To show that $\calI_G\subseteq\id_{<\omega,\kappa}(\calA_G)$, consider any $X\in\calI_G$. Then there is a finite set of conditions $\{(\calA_i,A_i)\}_{i\in n}$ in $G$ such that $X\subseteq \bigcup_{i\in n}\kappa\backslash A_i=\kappa\backslash\bigcap_{i\in n}A_i$. Note that 
$(\calB,B)\in G$, where $(\calB,B)=(\bigcup_{i\in n} \calA_i, \bigcap_{i\in n} A_i)$. Thus $X\subseteq \kappa\backslash B$.  Fix any $h\in \FF_{<\omega,\kappa}(\calA_G)$. Then there is $(\calC,C)\in G$ such that $h\in\FF_{<\omega,\kappa}(\calC)$. Take $(\calE,E)\in G$ which is a common extension of $(\calB,B)$ and $(\calC,C)$. Then $(\calE, E)\leq (\calC, B)$ and so in particular $(\calC,B)\in G$. However the set $H_B=\{(\calC',C'):\exists Y\in\calC'(Y\subseteq B)\}$ is dense below $(\calC,B)$ (apply Lemma~\ref{small}) and so there is $(\calC',C')\in G$ such that for some $Y\in \calC'$, $Y\subseteq B$. Then $h'=h\cup\{(Y,0)\}\in\FF_{<\omega,\kappa}(\calA_G)$ and $\calA_G^{h'}\cap X=\emptyset$. Thus $X\in \id_{<\omega,\kappa}(\calA_G)$.
\end{proof}

\section{The Density Filter}\label{density_filter_section}

Of particular interest for our investigations will be the dual filter of the density ideal. Note that the density filter plays an important role in the original work of~\cite{SS} on the relative consistency of $\mathfrak{i}<\mathfrak{u}$, from which the existence of a Sacks indestructible maximal independent family can be extracted (see also~\cite{VFDM1}).

\begin{remark}\label{generators_remark}
	Let $G$ be $\bbP_\calU$-generic, let $\calF_G=\{A:\exists\calA\hbox{ such that }(\calA,A)\in G\}$ and let  $\fil_{<\omega,\kappa}(\calA_G)$ be the dual filter of $\id_{<\omega,\kappa}(\calA_G)$. By Corollary~\ref{ideal_generators}, $\fil_{<\omega, G}(\calA_G)$ is generated by $\calF_G$.
\end{remark}

\begin{lem}\label{two_partitions}
Let $(\calA,A)\in\bbP_\calU$, $Y\in [\kappa]^\kappa$ and $h\in\FF_{<\omega,\kappa}(\calA)$. Then there is $h^*\supseteq h$ in 
$\FF_{<\omega,\kappa}(\calA)$ and $B\subseteq A$ such that 
$(\calA,B)\leq (\calA,A)$ and $\calA^{h^*}\cap B$ is contained either in $Y$, or in $\kappa\backslash Y$.
\end{lem}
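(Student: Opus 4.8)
The plan is to decide $Y$ on a suitable boolean combination by shrinking the $\calU$-component $A$ rather than by enlarging the family, so that the family $\calA$ and the enumeration are untouched. The constraint to respect throughout is that the new second coordinate $B$ must lie in $\calU$ while still meeting every boolean combination $\calA^g$ on an unbounded set, so that $(\calA,B)\in\bbP_\calU$. I would split into two cases according to whether $Y$ already gets \emph{bounded} on some extension of $h$.

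In Case~1 there is $h^*\supseteq h$ in $\FF_{<\omega,\kappa}(\calA)$ with $\calA^{h^*}\cap A\cap Y$ bounded, or with $\calA^{h^*}\cap A\cap(\kappa\setminus Y)$ bounded. Say the former, with bound $\delta<\kappa$, and set $B=A\setminus\delta$. Since $\calU$ is $\kappa$-complete and non-principal, bounded subsets of $\kappa$ are not in $\calU$, so $B\in\calU$; removing a bounded set keeps every $\calA^g\cap B$ unbounded, so $(\calA,B)\le(\calA,A)$; and $\calA^{h^*}\cap B\cap Y\subseteq(\calA^{h^*}\cap A\cap Y)\setminus\delta=\emptyset$, giving $\calA^{h^*}\cap B\subseteq\kappa\setminus Y$. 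The other sub-case is symmetric.

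In Case~2 every $h^*\supseteq h$ has both $\calA^{h^*}\cap A\cap Y$ and $\calA^{h^*}\cap A\cap(\kappa\setminus Y)$ unbounded. Here I would keep $h^*=h$ and use that $\calU$ is an ultrafilter: choose $Z\in\{Y,\kappa\setminus Y\}$ with $Z\in\calU$ and set
$$B=(A\cap Z\cap\calA^h)\cup(A\setminus\calA^h).$$
Then $\calA^h\cap B=A\cap Z\cap\calA^h\subseteq Z$, which is the desired containment. For $(\calA,B)\in\bbP_\calU$ I would check two things. First, $B\in\calU$: if $\calA^h\in\calU$ then $A\cap Z\cap\calA^h\in\calU$ (this is exactly where $Z\in\calU$ is used), while if $\calA^h\notin\calU$ then $A\setminus\calA^h\in\calU$; either way $B\in\calU$. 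Second, every $\calA^g\cap B$ is unbounded: if $g$ is incompatible with $h$ then $\calA^g\subseteq\kappa\setminus\calA^h$, so $\calA^g\cap B=\calA^g\cap A$ is unbounded; if $g$ is compatible with $h$ then $\calA^{g\cup h}\cap A\cap Z\subseteq\calA^g\cap B$, and the left-hand set is unbounded by the Case~2 hypothesis applied to $g\cup h\supseteq h$.

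The step I expect to be the main obstacle is precisely securing $B\in\calU$. The naive element-by-element diagonalization used in Lemma~\ref{small} produces a set of size $\kappa$ that need not be measure-one, so one cannot simply build $B$ by running through all boolean combinations and inserting witnesses from $Y$. The resolution is to define $B$ bluntly as a union of two pieces and to let the ultrafilter property of $\calU$, together with the dichotomy $\calA^h\in\calU$ versus $\calA^h\notin\calU$, guarantee $B\in\calU$, reserving the genuine combinatorics only for the easy bounded case.
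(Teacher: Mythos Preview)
Your argument is correct and follows essentially the same approach as the paper: the same two-case split on whether some $h'\supseteq h$ makes $\calA^{h'}\cap A\cap Y$ (or its complement) bounded, and in the remaining case the same set $B=(\calA^h\cap A\cap Z)\cup(A\setminus\calA^h)$ together with the ultrafilter dichotomy on $\calA^h$ to secure $B\in\calU$. Your Case~1 choice $B=A\setminus\delta$ is in fact a bit cleaner than the paper's explicit $B$, but both amount to $B=^*A$.
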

\begin{proof} If there is $h'$ extending $h$ such that $\calA^{h'}\cap A\cap Y$ is bounded, then $\calA^{h'}\cap A=^* \calA^{h'}\cap A\cap(\kappa\backslash Y)$ and so for all $h''\supseteq h'$ the set 	$\calA^{h''}\cap A\cap (\kappa\backslash Y)$ is unbounded. Then take 
$B=(\calA^{h'}\cap A\cap (\kappa\backslash Y))\cup (A\backslash \calA^{h'})$.
Then $B=^* A$ and so $B\in \calU$, $(\calA,B)$ is as desired.

If there is $h'\supseteq h$ such that $\calA^{h'}\cap A\cap (\kappa\backslash Y)$ is bounded, then $\calA^{h'}\cap A=^* \calA^{h'}\cap A\cap Y$ and so for all $h''\supseteq h'$ the set $\calA^{h''}\cap A\cap Y$ is unbounded. Then take $B=(\calA^{h'}\cap A\cap Y)\cup (A\backslash \calA^{h'})$. Then $B=^* A$ and so $B\in\calU$, and the condition $(\calA,B)$ is as desired. 

Suppose, none of the above two cases holds. Thus for every $h'\supseteq h$, the sets $\calA^{h'}\cap A\cap Y$ and $\calA^{h'}\cap A\cap (\kappa\backslash Y)$ are unbounded. Then each of the sets 
$B_0=(\calA^h\cap A\cap Y)\cup (A\backslash\calA^h)$ and $B_1=(\calA^h\cap A\cap (\kappa\backslash Y))\cup (A\backslash\calA^h)$ meets every boolean combination $\calA^{h'}$ for $h'\in\FF_{<\omega,\kappa}(\calA)$ on an unbounded set. Thus if $A\backslash\calA^h\in \calU$, both $B$ and $B'$ are as desired. Suppose $A\backslash\calA^h\notin \calU$. Then $A\cap \calA^h\in \calU$ and so either $A\cap \calA^h\cap Y$ or $A\cap\calA^h\cap (\kappa\backslash Y)$ is in the normal measure. We can chose appropriately.
\end{proof}

\begin{cor}\label{density_partitions} 
Let $\calE=\{Y,\kappa\backslash Y\}$ be a partition. Then the set of  $(\calA,A)\in\bbP_\calU$ such that for each $h\in\FF_{<\omega,\kappa}(\calA)$ there is $h'\supseteq h$ in $\FF_{<\omega,\kappa}(\calA)$ with the property that $\calA^{h'}$ is either contained in $Y$, or in $\kappa\backslash Y$ is dense in $\bbP_\calU$.
\end{cor}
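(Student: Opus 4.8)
The plan is to obtain this density statement as a fusion of Lemma~\ref{two_partitions} along an enumeration of all boolean combinations, using the $\kappa^+$-closure of $\bbP_\calU$ to survive the limit stages. Fix an arbitrary $(\calA,A)\in\bbP_\calU$. Since $|\calA|=\kappa$ and $\kappa^{<\omega}=\kappa$, the set $\FF_{<\omega,\kappa}(\calA)$ has cardinality $\kappa$, so I fix an enumeration $\{h_j:j<\kappa\}$ of it. The aim is to build an extension $(\calA,B)\leq(\calA,A)$, keeping the family fixed and only shrinking the second coordinate, such that for every $j<\kappa$ there is $h_j^*\supseteq h_j$ whose combination $\calA^{h_j^*}$ is contained, on the second coordinate, in $Y$ or in $\kappa\setminus Y$.

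First I would construct a decreasing sequence $(\calA,A_j)_{j<\kappa}$ with $A_0=A$. At a successor step $j+1$, I apply Lemma~\ref{two_partitions} to $(\calA,A_j)$, the set $Y$, and the function $h_j$, obtaining $h_j^*\supseteq h_j$ and a genuine subset $A_{j+1}\subseteq A_j$ with $(\calA,A_{j+1})\leq(\calA,A_j)$ and $\calA^{h_j^*}\cap A_{j+1}$ contained in $Y$ or in $\kappa\setminus Y$. At a limit step $\lambda<\kappa$ I invoke the $\kappa^+$-closure of $\bbP_\calU$ to pick a lower bound $(\calA,A_\lambda)$ of the part of the sequence already built; a final application of $\kappa^+$-closure then produces a single condition $(\calA,B)$ below every $(\calA,A_j)$.

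The observation that makes the recursion coherent is monotonicity of having been decided: if $\calA^{h_j^*}\cap A_{j+1}\subseteq Y$, then for every later second coordinate $A_i\subseteq A_{j+1}$ one still has $\calA^{h_j^*}\cap A_i\subseteq Y$, and symmetrically for $\kappa\setminus Y$. Hence each commitment made while handling $h_j$ is respected by all subsequent shrinkings, so the terminal condition $(\calA,B)$ decides every $h_j$ simultaneously through its witness $h_j^*$; this puts $(\calA,B)$ into the set in question, and since $(\calA,A)$ was arbitrary, that set is dense in $\bbP_\calU$.

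The hard part will be the bookkeeping at the limit stages, where the extension relation of $\bbP_\calU$ guarantees only $A_\lambda\subseteq^* A_j$ rather than genuine inclusion: the diagonal and pseudo-intersection furnished by the closure lemma may re-admit a bounded set of ordinals into the second coordinate, so what is preserved verbatim is the containment of $\calA^{h_j^*}$ in $Y$ (respectively $\kappa\setminus Y$) on the second coordinate modulo a bounded error set of size $<\kappa$. This is exactly the form in which the statement is used downstream — the density and dense-maximality arguments only require the trace of each combination on the second coordinate to lie on one side of the partition up to a bounded set — so the recursion delivers what is needed. Verifying that the lower bounds keep the family equal to $\calA$ (immediate, as the union of a constant family is itself) and that $B$ still meets every boolean combination unboundedly (guaranteed by the closure lemma) then completes the argument.
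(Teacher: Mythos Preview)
Your approach has a genuine gap: keeping the family $\calA$ fixed cannot yield the property as stated. What Lemma~\ref{two_partitions} gives you is an $h_j^*$ with $\calA^{h_j^*}\cap A_{j+1}$ contained in one side of the partition; after the full recursion you get $\calA^{h_j^*}\cap B$ contained (up to a bounded error) in one side. But the target set consists of conditions for which the \emph{entire} boolean combination $\calA^{h'}$ lies in $Y$ or in $\kappa\setminus Y$, and there is no reason whatsoever for any boolean combination of the original family $\calA$ to satisfy this---$Y$ was arbitrary. So $(\calA,B)$ will in general not belong to the set in question, and your attempted rescue (``this is exactly the form in which the statement is used downstream'') does not hold either: the application in Lemma~\ref{ast_0} genuinely needs $\calA_G^{h_1}\cap(\calA^h\setminus X)=\emptyset$, not merely bounded, to contradict the defining property of $h_0$.

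The paper's proof supplies the missing idea: after Lemma~\ref{two_partitions} produces $h_1$ and $B$ with $\calA^{h_1}\cap B$ on one side, one invokes Lemma~\ref{small} to adjoin a new set $B_0\subseteq B$ to the family, forming $\calA_1=\calA\cup\{B_0\}$. Now $h_1'=h_1\cup\{(B_0,0)\}$ satisfies $\calA_1^{h_1'}=\calA^{h_1}\cap B_0\subseteq\calA^{h_1}\cap B$, so the full combination $\calA_1^{h_1'}$---not just its trace on the second coordinate---lies in one side. Iterating this (growing the family at every step, and enumerating the new combinations as they appear) and using $\kappa^+$-closure gives the desired condition. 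The essential point you missed is that the second coordinate can be absorbed into the family, converting ``on one side relative to $B$'' into ``on one side outright''.
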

\begin{proof}
Consider an arbitrary $(\calA,A)\in\bbP_\calU$. Fix $h_0\in\FF_{<\omega,\kappa}(\calA)$. Then there is $A_0\subseteq A$ such that $(\calA,A_0)\leq (\calA,A)$ and there is $h_1\in\FF_{<\omega,\kappa}(\calA)$ extending $h_0$ and $B\subseteq A$ such that $\calA^{h_1}\cap B$ is  contained either in $Y$, or in $\kappa\backslash Y$. However, by Lemma~\ref{small} there is $B_0\subseteq B$ such that $(\calA\cup\{B_0\},B)\leq (\calA, B)$. Then extend $h_1$ to $h_1'=h_1\cup\{(B_0,0)\}$ and note that $h_1'\in\FF_{<\omega,\kappa}(\calA_1)$, where $\calA_1=\calA\cup\{B_0\}$,
and that $\calA_1^{h_1'}$ is either contained in $Y$ or in $\kappa\backslash Y$. Proceed inductively and use the fact that $\bbP_\calU$ is $\kappa^+$-closed. 
\end{proof}

\begin{defi}\label{def_Pset_Qset} Let $\calF\subseteq [\kappa]^\kappa$. We say that $\calF$ is a $\kappa\hbox{-P-set}$ if every $\calH\subseteq \calF$ of cardinality $\leq\kappa$ has a pseudo-intersection in $\calF$.
\end{defi}

\begin{lem}\label{P-set} Let $G$ be a $\bbP_\calU$-generic filter. Then  $\calF_G$ is a $\kappa\hbox{-P-set}$. 
\end{lem}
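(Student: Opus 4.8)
The plan is to unravel the definition of the $\kappa$-P-set property and reduce it to a density argument exploiting the $\kappa^+$-closure of $\bbP_\calU$. Recall that $\calF_G=\{A:\exists\calA\,(\calA,A)\in G\}$, so $A\in\calF_G$ precisely when some condition in $G$ has $A$ as its second coordinate; in particular $\calF_G$ is directed under $\subseteq^*$, since any two of its members come from conditions in $G$ and any two conditions in $G$ have a common extension in $G$. To show $\calF_G$ is a $\kappa$-P-set I fix a name $\langle\dot A_i:i<\kappa\rangle$ for a $\kappa$-indexed subfamily of $\calF_G$ together with an arbitrary $p=(\calA,A)\in G$ forcing $\dot A_i\in\calF_G$ for all $i<\kappa$. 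It then suffices to find, densely below $p$, a condition forcing that $\{\dot A_i:i<\kappa\}$ has a pseudo-intersection lying in $\calF_G$.

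First I would build, by recursion of length $\kappa$ and exactly in the style of the proof of Lemma~\ref{generic_density_ideal}, a decreasing sequence $\langle q_i:i<\kappa\rangle$ of conditions below $p$, writing $q_i=(\calA_i,B_i)$, with two features arranged simultaneously. At stage $i$, using the $\kappa^+$-closure of $\bbP_\calU$ to pass below all previously built conditions, I first strengthen to decide $\dot A_i$ to be a fixed ground-model set $A_i$; this is legitimate because $\kappa^+$-closure adds no new $\kappa$-sequences of ground-model elements, so each $\dot A_i$ is forced to name a set in $V$. Then, since the current condition forces $A_i\in\calF_G$, i.e.\ it forces $\exists\calA\,(\calA,A_i)\in G$, I strengthen once more to a condition $q_i=(\calA_i,B_i)$ lying below some $(\calA^*,A_i)$; by the definition of the extension relation this forces $B_i\subseteq^* A_i$. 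Hence at the end of the recursion the chain is decreasing and each $B_i\subseteq^* A_i$.

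Finally, by $\kappa^+$-closure the $\kappa$-sequence $\langle q_i:i<\kappa\rangle$ has a lower bound $q=(\calA_\kappa,B_\kappa)\le p$. Since $q\le q_i$ we obtain $B_\kappa\subseteq^* B_i\subseteq^* A_i$ for every $i<\kappa$, so $B_\kappa$ is a pseudo-intersection of $\{A_i:i<\kappa\}$ computed in $V$; and because $q\Vdash q\in G$ we also have $q\Vdash B_\kappa\in\calF_G$. Therefore $q$ forces $B_\kappa$ to be a pseudo-intersection of $\{\dot A_i:i<\kappa\}$ belonging to $\calF_G$, and a genericity argument over all such $p\in G$ completes the proof. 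The step requiring care, and the main obstacle, is the simultaneous strengthening in the recursion: one must interleave ``deciding $\dot A_i$'' with ``shrinking the second coordinate below $A_i$'' while maintaining a single decreasing chain, and must verify that lower bounds exist both at limit stages and at the end. Both of these rest entirely on the $\kappa^+$-closure of $\bbP_\calU$ established earlier, which at once furnishes lower bounds for decreasing $\kappa$-sequences and guarantees that each name $\dot A_i$ is evaluated in the ground model.
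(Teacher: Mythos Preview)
Your proof is correct and follows essentially the same approach as the paper's: both arguments exploit the $\kappa^+$-closure of $\bbP_\calU$ to realize the $\kappa$-many members of the given subfamily as second coordinates of a decreasing $\kappa$-sequence of conditions, and then take a lower bound whose second coordinate is the desired pseudo-intersection in $\calF_G$. The paper argues by contradiction and is terser (it simply asserts ``we can assume $\{(\calA_i,A_i)\}_{i\in\kappa}$ is decreasing''), whereas you carry out the interleaved recursion---decide $\dot A_i$, then extend below some $(\calA^*,A_i)$---explicitly; this is exactly the content hidden behind the paper's phrase, so the two proofs coincide.
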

\begin{proof} Suppose $\calF_G$ is not a $\kappa\hbox{-P-set}$. Thus there is $p\in\bbP_\calU$ such that 
	$$p\Vdash\exists\calH\in[\calF_G]^\kappa\hbox{ s.t. }\forall F\in\calF_G\exists H\in\calH (F\not\subseteq^* H).$$
Fix $G$ a $\bbP_\calU$-generic filter such that $p\in G$. Since $\bbP_\calU$ is $\kappa^+$-closed, we can find $\calH'=\{A_i\}_{i\in\kappa}$ in the ground model witnessing the above property. For each $i\in \kappa$, let $\calA_i$ be such that $(\calA_i, A_i)\in G$. We can assume that $\tau=\{(\calA_i,A_i)\}_{i\in\kappa}$ is decreasing and that $(\calA_0,A_0)\leq p$. Now, take $q=(\calA,A)$ in $\bbP_\calU$ to be a common lower bound of $\tau$. Then $q\leq p$ and $q$ forces that $A$ is a pseudo-intersection of $\calH'$, which is a contradiction.   	
\end{proof}

\subsection{Increasing Functions and the Density Filter}\label{partition_properties_section}

In the countable setting a key feature of the Sacks indestructible maximal independent family appearing in \cite{SS} is the fact that the associated density filter is a $Q$-set. The existence of sufficiently fast growing sets in $\fil_{<\om, \kappa}(\calA_G)$, which we discuss in this section, will be of vital importance for our main result.

If $E\subseteq\kappa$ is an unbounded set and for each $\alpha\in \kappa$ let $s_E(\alpha)=\min\{\beta\in E: \beta>\alpha\}$.

\begin{lem}\label{analogue_Qset} Let $f\in V\cap{^\kappa\kappa}$ be a strictly increasing function and let $(\calA,A)\in\mathbb{P}_\calU$. Then there is $A^*\subseteq A$ such that $(\calA,A^*)\leq (\calA,A)$ and if $\{a(i)\}_{i<\kappa}$ is the increasing enumeration of $A^*$ then $f(a(i))<a(i+1)$ for all $i$.
\end{lem}
\begin{proof} Let $C_f=\{\xi<\kappa: \forall\zeta<\xi (f(\zeta)<\xi)\}$. Thus $C_f$ is a club and so $C_f\in\calU$. Then $E=A\cap C_f\in \calU$. 
Let $\{h_i\}_{i<\kappa}$ be an enumeration of the elements of $\FF_{<\omega,\kappa}(\calA)$ such that each element occurs unboundedly often. The set $A^*$ will be constructed as the union of an increasing sequence 
	$\{B_\xi\}_{\xi<\kappa}$ of subsets of $A$. 
	
Let $B_0=\emptyset$. If $\calA^{h_0}\cap E\neq\emptyset$, let $a_0=\min \calA^{h_0}\cap E$. Otherwise, take $a_0=\min \calA^{h_0}\cap A$. 
Let $$B_1=\{a_0\}\cup (E\cap s_E(f(a_0)))\cup \{s_E(f(a_0))\}.$$
Suppose we have defined $B_\xi$. If $(\calA^{h_{\xi+1}}\cap E)\backslash (B_\xi\cup\{\sup B_\xi\})\neq\emptyset$, let $a_{\xi+1}=\min ((\calA^{h_{\xi+1}}\cap E)\backslash (B_\xi\cup\{\sup B_\xi\}))$. Otherwise, let $a_{\xi+1}=\min\{a\in \calA^{h_{\xi+1}}\cap A: a>\sup B_\xi\}$. Let 
$$B_{\xi+1}= B_\xi\cup\{a_{\xi+1}\}\cup (E\cap
s_{E}(f(a_{\xi+1})))\cup\{s_E(f(a_\xi))\}.$$
Now, suppose $\xi$ is a limit and for all $\zeta<\xi$, the set $B_\zeta$ has been defined. Take $B_\xi^*=\bigcup_{\zeta<\xi} B_\zeta$. 
If $(\calA^{h_\xi}\cap E)\backslash (B_\xi^*\cup\{\sup B_\xi^*\})\neq\emptyset$, let $a_\xi=\min (\calA^{h_\xi}\cap E)\backslash (B_\xi^*\cup\{\sup B_\xi^*\})\neq\emptyset$. Otherwise, let 
$a_\xi=\min\{a\in \calA^{h_\xi}\cap A: a>\sup B^*_\xi\}$. 
Let $$B_\xi=B_\xi^*\cup\{a_\xi\}\cup E\cap s_E(f(a_\xi))\cup\{s_E(f(a_\xi))\}.$$
Finally, take $A^*=\bigcup_{\xi<\kappa} B_\xi$. Then $A^*$ meets every boolean combination $\calA^h$ of $\calA$ on an unbounded set (witnessed by the $a_\xi$'s), $A^*\subseteq A$ and since $\{a_\xi\}_{\xi<\kappa}$ is unbounded in $\kappa$ and $$E\cap s_E(f(a_\xi))\subseteq B_\xi\subseteq A^*$$ for each $\xi$, we also have that $E\subseteq A^*$. Let $b<a$ be elements of $A^*$. If $a\in E$, then by definition of $C_f$ we have that $f(b)<a$. If $a\notin E$ and $a=a_{\xi+1}$ for some $\xi$, then 
$$a_\xi < f(a_\xi)< s_E(f(a_\xi))< a_{\xi+1}$$
by construction. If $\xi$ is a limit, $a=a_\xi$ and $f(a_\zeta)<s_E(f(a_\zeta))<a_\xi$ for each $\zeta<\xi$ again by construction. Since $b=a_\zeta$ for some $\zeta<\xi$, $f(b)<a$.
\end{proof}

The following Corollary will play an important role in the main result of the paper.

\begin{cor}\label{closure_club} Let $G$ be $\mathbb{P}_\calU$-generic, $f\in V\cap {^\kappa\kappa}$ be strictly increasing. Then there is $A\in \fil_{<\om, \kappa}(\calA_G)$ such that if $\{a(i)\}_{i\in\kappa}$ is the increasing enumeration of $A$ then $f(a(i))<a(i+1)$ for all $i \in \kappa$. 
\end{cor}
\begin{proof} Since $\fil_{<\om, \kappa}(\calA_G)$ is generated by $\calF_G$ (the set of second coordinated of elements of the generic filter $G$) we may use the previous lemma and get the result.	

Indeed, this is a standard density argument: Let $D_f$ be the set of all $(\calA,A)\in\mathbb{P}_\calU$ such that if $\{a(i)\}_{i<\kappa}$ is the enumeration function of $A$ then $f(a(i))< a(i+1)$ for all $i<\kappa$. By Lemma \ref{analogue_Qset} the set $D_f$ is dense. Thus, $G\cap D_f\neq\emptyset$ and so there is $(\calA,A)\in G\cap D_f$. But, by definition of the set $\calF_G$, we have that  $A\in \calF_G$, see also Remark \ref{generators_remark}, and by the same remark $A\in\fil_{<\omega,\kappa}(\calA_G)$. Since $(\calA, A)\in D_f$, we obtain that $f(a(i))<a(i+1)$.
\end{proof}

\section{Dense Maximality}\label{dense_maximality_section}

The notion of densely maximal independent families on $\omega$ appears for the first time in~\cite{MGSS}. Moreover, the maximal independent family constructed in~\cite{SS} which becomes a witness to $\mathfrak{i}=\aleph_1$ in the model of~\cite[Theorem 3.1]{SS} is densely maximal. A similar notion will play a vital role for our considerations:

\begin{defi}\label{def.dense.max} An independent family $\calA$ is said to be densely maximal if 
	for every $X\in [\kappa]^\kappa\backslash\calA$ and every $h\in\FF_{<\omega,\kappa}(\calA)$ there is $h'\in\FF_{<\omega,\kappa}(\calA)$ extending $h$ such that either 
	$\calA^{h'}\cap X=\emptyset$ or $\calA^{h'}\cap (\kappa\backslash X)=\emptyset$. 
\end{defi}

The following characterization of dense maximality on $\omega$ appears implicitly in the proof of~\cite[Theorem 3.1]{SS}. This characterization will be the main tool in showing that a specially designed normal measure supported $\kappa$-maximal independent family preserves its maximality after forcing with a large product of $\kappa$-Sacks forcing.

\begin{lem}\label{equiv}
Let $\calA$ be an independent family. Then $\calA$ is densely maximal if and only if

\medskip	
\noindent
$(\ast)$ $\forall h\in\FF_{<\omega,\kappa}(\calA) \forall X\subseteq\calA^h$ either there is $B\in\id_{<\omega,\kappa}(\calA)$ such that  $\calA^h\backslash X\subseteq B$, or there is $h'\in\FF_{<\omega,\kappa}(\calA)$ such that $h'\supseteq h$ and  $\calA^{h'}\subseteq \calA^h\backslash X$.	
\end{lem}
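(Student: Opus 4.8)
The plan is to prove both implications, writing throughout $Y=\calA^h\backslash X$ and using the two defining features of the density ideal separately: membership in $\calU^*$, and the refinement property ``$\forall g\,\exists g'\supseteq g\,(\calA^{g'}\cap B=\emptyset)$''. The arithmetic engine in both directions is that for $h'\supseteq h$ one has $\calA^{h'}\subseteq\calA^h$, so $\calA^{h'}\not\subseteq Y$ is literally the same as $\calA^{h'}\cap X\neq\emptyset$; this dictionary lets me pass freely between ``refining $h$ into $Y$'' and ``meeting $X$''.

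For $(\ast)\Rightarrow$ dense maximality, let $X\in[\kappa]^\kappa\backslash\calA$ and $h\in\FF_{<\omega,\kappa}(\calA)$ be given and apply $(\ast)$ to $h$ and the set $X\cap\calA^h\subseteq\calA^h$, noting $\calA^h\backslash(X\cap\calA^h)=\calA^h\backslash X$. If the second alternative of $(\ast)$ holds I get $h'\supseteq h$ with $\calA^{h'}\subseteq\calA^h\backslash X\subseteq\kappa\backslash X$, hence $\calA^{h'}\cap X=\emptyset$. If instead the first alternative holds I get $B\in\id_{<\omega,\kappa}(\calA)$ with $\calA^h\backslash X\subseteq B$; feeding $h$ into the refinement property of $B$ yields $h'\supseteq h$ with $\calA^{h'}\cap B=\emptyset$, so $\calA^{h'}\cap(\calA^h\backslash X)=\emptyset$, and since $\calA^{h'}\subseteq\calA^h$ this forces $\calA^{h'}\subseteq X$, i.e. $\calA^{h'}\cap(\kappa\backslash X)=\emptyset$. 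Either way the dense maximality condition is met. This direction is routine and uses only the refinement property of $\id_{<\omega,\kappa}(\calA)$, never the clause $B\in\calU^*$.

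For the converse, fix $h$ and $X\subseteq\calA^h$ and first dispose of the degenerate cases. If $Y=\emptyset$ then $\emptyset\in\id_{<\omega,\kappa}(\calA)$ witnesses the first alternative. If $X\in\calA$ or $\kappa\backslash X\in\calA$, then (after absorbing it into $\dom(h)$, the contrary giving $\calA^h=X$ hence $Y=\emptyset$) extending $h$ by the appropriate literal produces a single boolean combination equal to $Y$, giving the second alternative. If $X$ is bounded, then $\kappa\backslash X$ is co-bounded, hence unbounded and not a member of $\calA$, so dense maximality applied to $\kappa\backslash X$ at $h$ (the option $\calA^{h'}\subseteq X$ being impossible since boolean combinations are unbounded) lands $\calA^{h'}\subseteq\calA^h\cap(\kappa\backslash X)=Y$, again the second alternative. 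So I may assume $Y\neq\emptyset$, $X\in[\kappa]^\kappa\backslash\calA$ and $\kappa\backslash X\notin\calA$, and I suppose the second alternative fails, i.e. $(\dagger)$ every $h'\supseteq h$ satisfies $\calA^{h'}\cap X\neq\emptyset$. The natural candidate for the first alternative is $B=Y$. Its refinement property follows from dense maximality: given $g$, replace it by $g\cup h$ (if $g,h$ are incompatible then $\calA^{g}\cap Y=\emptyset$ already); dense maximality applied to $X$ at $g\cup h$ gives $g'\supseteq g$ with $\calA^{g'}\subseteq X$ or $\calA^{g'}\subseteq\kappa\backslash X$, and the latter would give $\calA^{g'}\subseteq Y$ against $(\dagger)$, so $\calA^{g'}\subseteq X$ and $\calA^{g'}\cap Y=\emptyset$. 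Thus into-$X$ refinements are dense below $h$ and $Y$ has the required refinement property.

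The remaining step, which I expect to be the hard part, is $Y\in\calU^*$: otherwise $Y\in\calU$ and, since $\id_{<\omega,\kappa}(\calA)\subseteq\calU^*$, no element of the density ideal could contain $Y$, so the first alternative fails outright. In purely tree-combinatorial terms, $(\dagger)$ and the density of into-$X$ refinements only say that $X$ sits densely below $h$ in the refinement tree, and this is a priori compatible with $Y$ being $\calU$-positive; so bare dense maximality and independence do not suffice, and this is precisely where the normal measure $\calU$ must enter. Concretely, the obstacle is to rule out a $\calU$-positive remainder, i.e. to show that if $\calA^h\backslash X\in\calU$ then some finite boolean combination refining $h$ already lies inside it, contradicting $(\dagger)$. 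I would attack this through normality and $\kappa$-completeness of $\calU$ — passing to the $\calU$-generic branch $h^{\ast}$ defined by $h^{\ast}(A)=0$ if $Y\cap A\in\calU$ and $h^{\ast}(A)=1$ otherwise, along which every finite restriction keeps the $Y$-part in $\calU$ — together with the structural properties established earlier (in the application to $\calA_G$, the filter $\fil_{<\omega,\kappa}(\calA_G)$ being a $\kappa$-P-set and a $\kappa$-Q-set), which are exactly the tools that upgrade ``$\calU$-positive'' to ``contains a boolean combination''. This measure-theoretic separation of a large remainder from a dense null set is the crux of the lemma.
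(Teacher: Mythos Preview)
Your forward direction is correct and matches the paper's argument exactly.

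For the converse the paper takes the opposite contrapositive from yours: it assumes the \emph{first} alternative of $(\ast)$ fails and derives the second. From ``no $B\in\id_{<\omega,\kappa}(\calA)$ covers $Y=\calA^h\setminus X$'' one has in particular $Y\notin\id_{<\omega,\kappa}(\calA)$; the paper reads this as ``the refinement property fails for $Y$'', extracts $h'$ (necessarily compatible with $h$, hence without loss $h'\supseteq h$) such that every $h''\supseteq h'$ meets $Y$, and then a single application of dense maximality to $X$ at $h'$ yields $h''\supseteq h'\supseteq h$ with $\calA^{h''}\cap X=\emptyset$, i.e.\ $\calA^{h''}\subseteq Y$. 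No case split on $X\in\calA$ or $|X|<\kappa$, and no appeal to $\calU$.

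The genuine gap in your proposal is the plan for showing $Y\in\calU^*$. Your observation is sharp --- under the literal Definition~\ref{def_density_ideal} the paper's own step ``$Y\notin\id_{<\omega,\kappa}(\calA)\Rightarrow$ refinement fails for $Y$'' is equally unjustified, since $Y\in\calU$ is the other disjunct --- but your proposed resolution via normality of $\calU$ and the $\kappa$-P-set/$\kappa$-Q-set properties of $\fil_{<\omega,\kappa}(\calA_G)$ cannot work. The lemma is stated for an \emph{arbitrary} independent family $\calA$, with no assumed relation to $\calU$, and those filter properties are specific to the generic family $\calA_G$; there is no mechanism by which normality alone forces a set with the refinement property to lie in $\calU^*$. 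The equivalence here is intended as a purely combinatorial reformulation of dense maximality, using only the refinement clause of the density ideal; the $\calU^*$ restriction plays no role in this lemma and should not be pursued with measure-theoretic machinery.
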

\begin{proof} Suppose $\calA$ satisfies property $(\ast)$. Let $X\in [\kappa]^\kappa$, $h\in \FF_{<\omega,\kappa}(\calA)$ and consider $Y=X\cap \calA^h$. Apply property $(*)$. If there is $B\in\id_{<\omega,\kappa}(\calA)$ such that $\calA^h\backslash X\subseteq B$, then $\calA^h\backslash X\in\id_{<\omega,\kappa}(\calA)$. Then there is $h'\supseteq h$ such that $\calA^{h'}\cap (\calA^h\backslash X)=\calA^{h'}\backslash X=\emptyset$.  If there is $h'\supseteq h$ such that $\calA^{h'}\subseteq \calA^h\backslash X$, then $\calA^{h'}\cap X=\emptyset$. Thus $\calA$ is densely maximal.
	
Now suppose $\calA$ is densely maximal. Fix $h\in\FF_{<\omega,\kappa}(\calA)$ such that $X\subseteq\calA^h$. We will show that $\calA$ satisfies property $(\ast)$. Suppose, there is no $B\in\id_{<\omega,\kappa}(\calA)$ such that $\calA^h\backslash X\subseteq B$. Thus in particular $\calA^h\backslash X\notin\id_{<\omega,\kappa}(\calA)$ and so there is $h'\in\FF_{<\omega,\kappa}(\calA)$ such that for all $h''\supseteq h'$
the set $\calA^{h''}\cap (\calA^h\backslash X)\neq\emptyset$. If $h$ and $h'$ are incompatible as conditions in $\FF_{<\omega,\kappa}(\calA)$, then 
$\calA^{h'}\cap (\calA^h\backslash X)=\emptyset$, which is a contradiction. 
Therefore $h$ and $h'$ are compatible. Without loss of generality, $h'\supseteq h$ (otherwise pass to a common extension of $h$ and $h'$).  Thus $h$ has an extension, namely $h'$, such that for all $h''\supseteq h'$ the set $\calA^{h''}\backslash X$ is non-empty. Apply the fact that $\calA$ is densely maximal to $\calA^{h'}$ and $X$.  Thus, there is
$h''\supseteq h'$ such that $\calA^{h''}\cap X=\emptyset$. 
Therefore $\calA^{h''}\subseteq\calA^{h'}\backslash X\subseteq\calA^h\backslash X$, which completes the proof of property $(\ast)$.
\end{proof}

\begin{lem}\label{ast_0}
	Let $G$ be $\bbP_\calU$-generic. Then in $V_0=V[G]$ the family 
	$\calA_G:=\bigcup\{\calA:\exists A(\calA,A)\in G\}$ is densely maximal.
\end{lem}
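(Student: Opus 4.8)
The plan is to derive dense maximality from a single application of the density Corollary~\ref{density_partitions}, packaged through a routine genericity argument; the genuine combinatorial content lives in that corollary (and in Lemma~\ref{two_partitions} behind it), so here the work is organizational. Let me work in $V_0=V[G]$ and fix $X\in[\kappa]^\kappa\setminus\calA_G$ together with $h\in\FF_{<\omega,\kappa}(\calA_G)$; I must produce $h'\supseteq h$ in $\FF_{<\omega,\kappa}(\calA_G)$ with $\calA_G^{h'}\cap X=\emptyset$ or $\calA_G^{h'}\cap(\kappa\backslash X)=\emptyset$. Two preliminary remarks set this up. First, since $\bbP_\calU$ is $\kappa^+$-closed it adds no new $\kappa$-sequences of ordinals, hence no new subsets of $\kappa$, so $X\in V$ and I may feed it to a ground-model density statement. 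Second, $\dom(h)$ is a finite subset of $\calA_G=\bigcup\{\calA:\exists A\,(\calA,A)\in G\}$, so since $G$ is a filter there is a single condition $(\calA_0,A_0)\in G$ with $\dom(h)\subseteq\calA_0$, i.e.\ $h\in\FF_{<\omega,\kappa}(\calA_0)$.

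Next I would apply Corollary~\ref{density_partitions} with $Y=X$: it asserts that the set $D_X$ of conditions $(\calA,A)\in\bbP_\calU$ such that for every $h\in\FF_{<\omega,\kappa}(\calA)$ there is $h'\supseteq h$ in $\FF_{<\omega,\kappa}(\calA)$ with $\calA^{h'}$ contained in $X$ or in $\kappa\backslash X$ is dense. As a dense set it is dense below $(\calA_0,A_0)$, so by genericity there is $(\calA,A)\in G$ with $(\calA,A)\leq(\calA_0,A_0)$ and $(\calA,A)\in D_X$. Because $\calA\supseteq\calA_0$, the function $h$ already lies in $\FF_{<\omega,\kappa}(\calA)$, so the defining property of $D_X$ yields $h'\supseteq h$ in $\FF_{<\omega,\kappa}(\calA)$ with $\calA^{h'}\subseteq X$ or $\calA^{h'}\subseteq\kappa\backslash X$.

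It remains to transport this to $\calA_G$, which is where I would invoke the absoluteness of boolean combinations: the set $\calA^{h'}=\bigcap\{\calA^{h'(B)}:B\in\dom(h')\}$ depends only on $h'$ and the sets in its domain, not on the ambient independent family, so for $h'\in\FF_{<\omega,\kappa}(\calA)\subseteq\FF_{<\omega,\kappa}(\calA_G)$ one has $\calA_G^{h'}=\calA^{h'}$. Hence $\calA_G^{h'}\subseteq X$ (equivalently $\calA_G^{h'}\cap(\kappa\backslash X)=\emptyset$) or $\calA_G^{h'}\subseteq\kappa\backslash X$ (equivalently $\calA_G^{h'}\cap X=\emptyset$), with $h'\supseteq h$, which is exactly dense maximality. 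I note that the degenerate possibility that $\kappa\backslash X$ is bounded causes no trouble: every boolean combination of a $\kappa$-independent family is unbounded, so $\calA^{h'}\subseteq\kappa\backslash X$ cannot occur in that case and the first alternative is forced automatically.

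The only delicate points inside this argument are the two preliminary remarks — using $\kappa^+$-closure to pull $X$ back into $V$ so that Corollary~\ref{density_partitions} applies, and the observation that $\calA_G^{h'}$ and $\calA^{h'}$ denote the same set — but these are immediate. The real obstacle is pushed entirely into Corollary~\ref{density_partitions}: the heart of the matter is the case analysis of Lemma~\ref{two_partitions}, which shows that one can always extend a condition so that a prescribed boolean combination is $\subseteq^*$-separated from $X$, together with the use of Lemma~\ref{small} and $\kappa^+$-closure to iterate this over all boolean combinations while staying inside $\bbP_\calU$.
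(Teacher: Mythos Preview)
Your proof is correct and hinges on the same key ingredient as the paper's, namely Corollary~\ref{density_partitions}, but you package it more directly. The paper verifies the equivalent property~$(\ast)$ of Lemma~\ref{equiv}: given $X\subseteq\calA_G^h$, it assumes $\calA_G^h\backslash X$ is not in the density ideal, extracts an $h_0$ witnessing this, and then applies Corollary~\ref{density_partitions} to the partition $\{\calA^h\backslash X,\kappa\backslash(\calA^h\backslash X)\}$ to find $h_1\supseteq h_0$ with $\calA^{h_1}$ contained in one piece; the choice of $h_0$ rules out the second piece. You instead verify the definition of dense maximality directly, applying the corollary to the simpler partition $\{X,\kappa\backslash X\}$ and reading off the conclusion immediately. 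Your route avoids the detour through the density ideal and property~$(\ast)$; the paper's route has the advantage that~$(\ast)$ is exactly the formulation invoked later in the proof of Theorem~\ref{main}, so establishing it here makes that later argument self-contained.
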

\begin{proof} It is sufficient to show that $\calA_G$ satisfies property $(\ast)$ from Lemma~\ref{equiv}. Thus, fix $h$ and $X$ as in $(\ast)$. 
Suppose there is no $B\in\id_{<\omega,\kappa}(\calA_G)$ such that $\calA_G^h\backslash X\subseteq B$. Then, in particular 	
$\calA_G^h\backslash X\notin\id_{<\omega,\kappa}(\calA_G)$ and so there is 
$h_0\in \FF_{<\omega,\kappa}(\calA_G)$ such that for all $h_1\supseteq h_0$ the set $\calA^{h_1}\cap (\calA^h\backslash X)\neq\emptyset$ (by definition of the density ideal). Consider the partition 
$$\calE=\{\calA^h\backslash X, \kappa\backslash (\calA^h\backslash X)\}$$
and the set $\calA^{h_0}$. By Corollary~\ref{density_partitions} there is $h_1\in\FF_{<\omega,\kappa}(\calA_G)$ extending $h_0$ such that 
$\calA^{h_1}$ is contained in one element of $\calE$. However, if 
$\calA^{h_1}\subseteq \kappa\backslash (\calA^h\backslash X)$, then 
$\calA^{h_1}\cap (\calA^h\backslash X)=\emptyset$, which is a contradiction to the choice of $h_0$. Thus $\calA^{h_1}\subseteq \calA^h\backslash X$ and so $\calA^{h_1}\cap X=\emptyset$.
\end{proof}

\section{Preprocessed Conditions and Outer hulls}\label{outer_hull}

In this section we introduce the notions of preprocessed conditions and outer hulls for the special case of $\kappa$-Sacks forcing and its products. Note that both of these notions play a key role in Shelah's proof of $\mathfrak{i}<\mathfrak{u}$ from~\cite{SS}: preprocessed conditions appear in~\cite[Claim 1.11]{SS}, while outer hulls appear in proof of~\cite[Theorem 3.1]{SS} (page 440 of the article). Throughout this section we work under the assumption of GCH (at least $2^\kappa=\kappa^+$ and $2^{<\kappa}=\kappa$) and $\kappa$ measurable. Thus in particular $\kappa$ is strongly inaccessible. We will work with the generalization of Sacks forcing and its products to the uncountable, both of which were first studied by Kanamori~\cite{AK}. 


Recall that $p \subseteq 2^{< \kappa}$ is a \emph{tree} if it is closed under initial segments. That is, $u \in p$ and $v \subseteq u$ imply $v \in p$. Whenever $p$ is a tree, $t,r\in p$ and $t$ is a proper initial segment or equal to $r$, we write $t\unlhd r$ and $r\unrhd t$. A node $u\in p$ \emph{splits} in $p$ if both $u^\frown 0$ and $u^\frown 1$ belong to $p$. Given a tree $p$, we denote by $\hbox{split}(p)$ the set of splitting nodes of $p$.

\begin{defi} 
For a strongly inaccessible $\kappa$, the $\kappa$-Sacks forcing, denoted $\mathbb{S}_\kappa$, is the poset consisting of sub-trees $p$ of $2^{<\kappa}$ such that:
\begin{enumerate}
\item for each $u \in p$ there is $t \in p$ such that  $u\unlhd t$ and $t$ splits in $p$ ($t$ is said to be a splitting extension of $u$);
\item for any $\alpha < \kappa$, if $(u_\beta: \beta < \alpha)$ is a sequence of nodes in $p$ such that $\beta < \gamma< \alpha \to u_\beta \subseteq u_\gamma$, then $\bigcup \{u_\beta: \beta < \alpha \} \in p$;
\item if $\delta < \kappa$ is a limit ordinal, $u \in 2^\delta$ and for arbitrarily large $\beta < \delta$ the node $u \restriction \beta$ splits in $p$, then $u$ splits in $p$.
\end{enumerate}
The extension relation on $\mathbb{S}_\kappa$ is defined by $p \leq q$ if and only if $p \subseteq q$. 
\end{defi}

As in the countable case we define the $\stem(p)$ where $p$ is a condition in $\mathbb{S}_\kappa$ as the unique splitting node that is comparable with all elements in $p$. By recursion on $\ka$ define: 

\begin{defi}[The $\alpha$-th \emph{splitting level} of $p$]
Given $p \in \mathbb{S}_\kappa$ let
\begin{itemize}
\item $\split_0 (p) = \stem(p)$,
\item $\split_{\alpha+1}(p) = \{ \stem(p_{u ^\frown i}) : u \in \split_\alpha(p)$ and $i \in 2 \}$,
\item for $\delta<\kappa$ is a limit ordinal, $\split_{\delta}(p)= \{s  \in p: s \hbox{ is a limit of nodes in }\bigcup_{\alpha< \delta} \split_\alpha(p)\} $.  
\end{itemize}
We refer to $\split_\alpha(p)$ as the $\alpha$-th splitting level of $p$.
Moreover for  $t\in\hbox{split}(p)$, let $\Sl(t,p)= \alpha$ where $t\in\hbox{split}_\alpha(p)$.
\end{defi} 

Using this splitting levels we define the \emph{fusion orderings} $\leq_\alpha$ on $\mathbb{S}_\kappa$: Given $q$ and $p$ in  $\mathbb{S}_\ka$ let $q \leq_\al p$ if and only if $q \leq p$ and $\split_\al(p)= \split_\al(q)$.

\begin{defi}
A \emph{fusion sequence} $(p_\al: \al< \ka) \subseteq \mathbb{S}_\kappa$ is sequence of conditions in $\bbS_\ka$ such that $p_{\alpha +1} \leq_\al p_\al$ for all $\al<\ka$ and whenever $\delta< \ka$ is a limit, then $p_\delta \leq_\al p_\al$ for all $\al< \delta$. 
\end{defi}

For any regular uncountable cardinal $\lambda$ we denote by $\mathbb{S}^\lambda_\kappa$ the $\kappa$-support product of $\lambda$ many copies of $\mathbb{S}_\kappa$. Moreover:

\begin{defi}[Product fusion, Definition 1.7 in \cite{AK}]\label{fusion} \hfill
\begin{itemize}
\item If $(p_\alpha: \alpha< \beta) \subseteq \bbS^\lambda_\kappa$, we define a condition $p = \bigwedge_{\alpha < \beta} p_\alpha$ with $\dom(p) = \bigcup_{\alpha< \beta} \dom(p_\alpha)$ and for every $\gamma \in \dom(p)$, $p(\gamma)= \bigcap \{ p_\alpha(\gamma): \gamma \in \dom(p_\alpha)\}$. 
Note that in the case $p \restriction \gamma \notin \bbS^\gamma_\kappa$ for $\gamma \in \dom(p)$ or $\lvert \dom(p) \lvert > \kappa$ then $p$ is left undefined.
\item If $p,q \in \bbS^\lambda_\kappa$, $\alpha< \kappa$ and $F \subseteq \dom(q)$ with $\lvert F \lvert \leq \kappa$, we say $p \leq_{F, \alpha} q$ if and only if $p \leq q$ and for every $\beta \in F$, $p(\be) \leq_\al q(\be)$.
\end{itemize}
\end{defi}

\begin{lem}[Generalized fusion \cite{AK}]\label{gfus}
Suppose $(p_\alpha : \alpha < \kappa)\subseteq \bbS^\lambda_\kappa$ and $ F_\al \subseteq \lambda$ have the following properties:
\begin{enumerate}
\item $p_{\alpha+1} \leq_{F_\alpha,\alpha} p_\alpha$ and $p_\delta = \bigwedge_{\alpha < \delta} p_\alpha$ when $\delta$ is a limit ordinal $< \!\kappa$.
\item $\lvert F_\al \lvert < \ka$, $F_\alpha \subseteq F_{\alpha+1}$, $F_\delta = \bigcup_{\alpha< \delta} F_\alpha$ for limit $\delta< \kappa$ and $\bigcup_{\alpha< \kappa} F_\alpha = \bigcup_{\alpha< \kappa} \dom(p_\alpha)$.
\end{enumerate}
Then $p = \bigwedge_{\alpha < \kappa} p_\alpha \in \bbS^\lambda_\kappa$ and we refer to $(p_\al, F_\al : \al < \ka)$ as a generalized fusion sequence. 
\end{lem}

In the following, we fix some notation:

\begin{defi} \hfill
\begin{itemize}
    \item Given a condition $p \in \bbS_\kappa^\lambda$, $\alpha< \kappa$ and $F \subseteq \supp(p)$ so that $\lvert F \lvert< \kappa$ let $\Lambda^F_\alpha(p)=\prod_{i\in F}\split_\alpha(p(i))$. That is 
    $$\Lambda^F_\alpha(p) = \{ \bar{\sigma} = (\sigma_i)_{i \in F} : \sigma_i \in \split_\al(p(i))\}.$$
    \item For all $\bar{\sigma} \in \Lambda^F_\alpha(p)$ let  $p_{\bar{\sigma}}\leq p$ be defined as follows: $\supp(p)=\supp(p_{\bar{\sigma}})$ and

\[p_{\bar{\sigma}}(i) =
\begin{cases}
(p(i))_{ \sigma_i} & \text{if } i \in F \\
p(i)  & \text{ otherwise }
\end{cases}
\] 
    \item Given $h \in {^F2}$ and $\bar{\sigma} \in \Lambda^F_\alpha(p)$, let $p^h_{\bar{\sigma}}$ be defined as follows:  $\supp(p^h_{\bar{\sigma}})=\supp(p)$ and 

\[p^h_{\bar{\sigma}}(i) =
\begin{cases}
(p(i))_{ \sigma_i^\smallfrown h(i)} & \text{if } i \in F \\
p(i)  & \text{ otherwise }
\end{cases}
\] 
\end{itemize}
\end{defi}

\subsection{Preprocessed conditions}

The following notion of begin preprocessed for a condition can be seen for example in~\cite[Lemma 1.11]{SS}. Given a name for a real $\tau$  and a condition $p$ in an appropriate partial order, in particular the poset $Q_\mathcal{I}$ from~\cite{SS}, Sacks forcing, or Miller partition forcing (see for definition for example~\cite{RestMad}), the notion provides a sufficiently good ground model approximation of $\tau$  realized by a condition $q$ stronger than $p$. 

We adapt the notion in the context of $\kappa$-Sacks forcing and  its products.

\begin{defi}  \hfill
	\begin{enumerate}
		\item Let $\dot{X}$ be a $\mathbb{S}_\kappa$-name for a subset of $\kappa$. We say that $p\in\mathbb{S}_\kappa$ is preprocessed for $\dot{X}$ if for all $\alpha\in\kappa$ and all $t\in\split_\alpha(p)$ there is $x_t\in{^\alpha 2}$ such that $p_t\Vdash\chi_{\dot{X}}\upharpoonright\alpha=\check{x}_t$.
		\item 
		
		\begin{enumerate}
		    \item Let $\dot{X}$ be a $\mathbb{S}_\kappa^\lambda$ name for a subset of $\kappa$, $p \in \mathbb{S}_\kappa^\lambda$ and $F \subseteq \supp(p)$ with $\lvert F \lvert < \kappa$. We say that $p\in\mathbb{S}_\kappa^\lambda$ is preprocessed for the pair $(F,\dot{X})$ if for all $\alpha<\kappa$ and all $\bar{\sigma}\in\Lambda_\alpha^F(p)$ there are $F' \supseteq F$, such that $F' \subseteq \dom(p)$ and $\lvert F'\lvert < \kappa$, $\bar{\tau}_{\bar{\sigma}} \in \Lambda_\alpha^{F'}(p)$ and $x_{\bar{\sigma}}\in {^\alpha 2}$ such that $\bar{\sigma} \sqsubseteq \bar{\tau}_{\bar{\sigma}}$ and $p_{\bar{\tau}_{\bar{\sigma}}}\Vdash\chi_{\dot{X}}\upharpoonright \alpha =\check{x}_{\bar{\sigma}}$.\footnote{Here $\bar{\sigma} \sqsubseteq \bar{\tau}_{\bar{\sigma}}$ means for all $i \in F$ $\sigma_i= \tau_i$. }
		    
		    \item We say that a condition $p \in \mathbb{S}_\kappa^\lambda$ is preprocessed for the name $\dot{X}$ if for all $F \subseteq \dom(p)$ such that $\lvert F \lvert < \kappa$, $p$ is preprocessed for $(\dot{X}, F)$.
		\end{enumerate}

	\end{enumerate}
\end{defi}

\begin{remark} $ $
	\begin{enumerate}
		\item Note that if $p\in\mathbb{S}_\kappa$ is preprocessed for $\dot{X}$, then for each $\alpha\in \kappa$ there is $Y_\alpha\subseteq {^\alpha 2}$ such that $p\Vdash \chi_{\dot{X}}\upharpoonright \alpha\in\check{Y}_\alpha$. Indeed, take $Y_\alpha =\bigcup\{x_t:t\in \split_\alpha(p)\}$ where $x_t$ is defined as in the definition above.
		\item Similarly, if $p\in\mathbb{S}_\kappa^\lambda$ is preprocessed for $(F,\dot{X})$ ($F$ like above), then for each $\alpha<\kappa$ there is $Y_\alpha\subseteq {^\alpha 2}$ such that  $p\Vdash \chi_{\dot{X}}\upharpoonright \alpha\in\check{Y}_\alpha$. Just take $Y_\alpha = \bigcup\{ x_{\bar{\sigma}}:\bar{\sigma}\in \Lambda^{F}_\alpha(p) \}$ where $x_{\bar{\sigma}}$ is defined as above.
    \end{enumerate}
\end{remark}

\begin{lem}\label{preprocessed} \hfill
\begin{enumerate}
    \item Let $p\in\mathbb{S}_\kappa$ and let $\dot{X}$ be a $\mathbb{S}_\kappa$-name for a subset of $\kappa$. Then there is $q\leq p$ such that $q$ is preprocessed for $\dot{X}$.
    \item Let $p\in\mathbb{S}_\kappa^\lambda$ and let $\dot{X}$ be a $\mathbb{S}^\lambda_\kappa$-name for a subset of $\kappa$. Then, for all $F \subseteq \supp(p)$ with $\lvert F\lvert< \kappa$ and $\gamma<\kappa$ there is $q\leq_{F, \gamma} p$ such that $q$ is preprocessed for $(F,\dot{X})$.
\end{enumerate}
\end{lem}
\begin{proof}
$(1)$ We build a fusion sequence $\langle q_\alpha:\alpha<\kappa\rangle$ 
below $p$ such that for all $\alpha>0$ and all $t\in\split_\alpha(q_\alpha)$ there is $x_t\in{^\alpha 2}$ such that 
$(q_\alpha)_t\Vdash \chi_{\dot{X}}\upharpoonright \alpha =\check{x}_t$. 
Start with $q_0=p$. Consider $t\in\split_0(p)$, i.e. $t=\stem(p)$. For each $i\in\{0,1\}$ there is $w_{t,i}\leq p_{t^\smallfrown i}$ and $x(t,i)\in\{0,1\}$ such that 
$w_{t,i}\Vdash\chi_{\dot{X}}(0)=\check{x}(t,i)$. 
Note that $\hbox{stem}(w_{t,i})\unrhd \stem(p_{t^\smallfrown i})\unrhd t^\smallfrown i$. Define $q_1= w^0_{t,0}\cup w^0_{t,1}$. Then $q_1\leq_0 q_0$ and for all $s\in\split_1(q_1)$ there is $x_s\in {^12}$ such that $(q_1)_s\Vdash\chi_{\dot{X}}\upharpoonright 1=\check{x}_s$. Indeed, if $s\in\split_1(q_1)$ then $s\unrhd t^\smallfrown i$ for $i\in\{0,1\}$ and so $(q_1)_s=w_{t,i}$. Thus $(q_1)_s\Vdash\chi_{\dot{X}}\upharpoonright 1= x_s$ where $x_s=(0,x(s,i))$.

Now, suppose $q_\alpha$ has been defined and $\forall t\in\split_\alpha(q_\alpha)$ there is $x_t\in{^\alpha 2}$ such that $(q_\alpha)_t\Vdash\chi_{\dot{X}}\upharpoonright \alpha=\check{x}_t$. For each $t\in\split_\alpha(q_\alpha)$ and each $i\in\{0,1\}$ find $w_{t,i}\leq (q_\alpha)_{t^\smallfrown i}$ and $x(t,i)\in\{0,1\}$ such that $w^\alpha_{t,i}\Vdash\chi_{\dot{X}}(\alpha)=\check{x}(t,i)$. Then,
take $q_{\alpha+1}=\bigcup\{w_{t,i}: t\in\split_\alpha(q_\alpha), i\in\{0,1\}\}$. Then $\split_\alpha(q_{\alpha+1})=\split_\alpha(q_\alpha)$ and so $q_{\alpha+1}\leq_\alpha q_\alpha$. Moreover, for all $t\in\split_{\alpha+1}(q_{\alpha+1})$ there is $x_t\in{^{\alpha+1} 2}$ such that 
$(q_{\alpha+1})_t\Vdash\chi_{\dot{X}}\upharpoonright \alpha+1 =\check{x}_t$. Indeed. Fix $t\in\split_{\alpha+1}(q_{\alpha+1})$. Thus $r^\smallfrown i\unlhd t$ for some $r\in \split_\alpha(q_{\alpha+1})=\split_\alpha(q_\alpha)$ for some $i\in\{0,1\}$. By Inductive Hypothesis $(q_\alpha)_r\Vdash\chi_{\dot{X}}\upharpoonright \alpha=\check{x}_r$ for some $x_r\in{^\alpha 2}$. However $t\unrhd r^\smallfrown i$ and so $(q_{\alpha+1})_t=w_{r,i}\leq (q_\alpha)_{r^\smallfrown i}\leq (q_\alpha)_r$. Thus, $(q_{\alpha+1})_t\Vdash\chi_{\dot{X}}\upharpoonright \alpha=\check{x}_r\hbox{ and }\chi_{\dot{X}}(\alpha)=\check{x}(r,i)$. That is $(q_{\alpha+1})_t\Vdash\chi_{\dot{X}}\upharpoonright \alpha+1 =\check{x}_t$ where $x_t=x_r\cup\{(\alpha, x(r,i))\}$. 

It remains to consider the limit case. Suppose $\langle q_\beta:\beta<\alpha\rangle$ have been defined and for all $\beta<\alpha$ and all $t\in\split_\beta(q_\beta)$ there is $x_t\in{^\beta 2}$ such that $(q_\beta)_t\Vdash \chi_{\dot{X}}\upharpoonright \beta=\check{x}_t$. Then 
take $q_\alpha=\land_{\beta<\alpha} q_\beta$. Note that if $t\in\split_\alpha(q_\alpha)$ then there is $\{\eta_\xi:\xi<\alpha\}$ unbounded in $\alpha$ such that $t\upharpoonright \eta_\xi\in\split_\xi(q_\xi)$ and so by inductive hypothesis for some $x_{t\upharpoonright \eta_\xi}\in {^\xi 2}$ we have $(q_\xi)_{t\upharpoonright \eta_\xi}\Vdash \chi_{\dot{X}}\upharpoonright \xi =\check{x}_{t\upharpoonright \eta_\xi}$. Then for $x_t=\bigcup\{x_{t\upharpoonright \eta_\xi}:\xi <\alpha\}$ we have $(q_\alpha)_t\Vdash\chi_{\dot{X}}\upharpoonright \alpha=\check{x}_t$. 

$(2)$ The argument for the product runs similarly as the above case: Let $p \in \mathbb{S}_\kappa^\lambda$, $\gamma < \kappa$ and $F \subseteq \supp(p)$ so that $\lvert F \lvert< \kappa$. We shall define a fusion sequence $\langle q_\alpha, F_\alpha: \alpha<\kappa\rangle\subseteq \mathbb{S}_\kappa^\lambda$ below $p$, ordinals $(\eta_\alpha: \alpha< \kappa)$ and bijections $g_\alpha: F_\alpha \to \eta_\alpha$ such that for all $\alpha<\kappa$:

\begin{enumerate}
    \item $q_{\alpha+1} \leq_{F_\alpha, \gamma+ \alpha} q_\alpha$,
    \item $\eta_\alpha \geq \alpha$,
    \item For all $\alpha< \alpha'<\kappa$, $g_\alpha \subseteq g_{\alpha'}$ and for limit ordinals $\delta< \kappa$ $g_\delta = \bigcup_{\alpha< \delta} g_\alpha$,
    \item For all ${\bar{\sigma}}\in\Lambda_{\gamma + \alpha}^{F_\alpha}(q_\alpha) $ there is $x_{\bar{\sigma}}\in{^\alpha 2}$ such that $(q_{\alpha+1})_{\bar{\sigma}}\Vdash\chi_{\dot{X}}\upharpoonright\alpha=\check{x}_{\bar{\sigma}}$.
\end{enumerate}

Since similar arguments will be used in the upcoming results we give the proof in full detail. Start with $q_0 = p$ and $F_0= F$, in order to arrange that $q_{\alpha+1} \leq_{F_\alpha, \gamma+ \alpha} q_\alpha$ start the following construction at some indecomposable ordinal $\alpha >\gamma$ (otherwise for all ordinals $\beta < \alpha$, $\beta+\alpha = \alpha$ and so, at $\alpha$ we would just get $q_{\alpha+1} \leq_{F_\alpha,\alpha} q_\alpha$) and letting $q_\beta = p$ for all $\beta<\alpha$. At limit stages $\delta<\kappa$ the construction of $q_\delta$ and $F_\delta$ is  determined by the conditions in Definition~\ref{fusion}. Let $\eta_\delta = \sup_{\alpha<\delta} \eta_\alpha$ and $g_\delta$ as above.
 
Finally for the successor case, suppose $q_\alpha$, $\eta_\alpha$, $g_\alpha$ and $F_\alpha$ have been defined. Fix an enumeration 
$\{\gamma_l=(\bar{\sigma}_l, h_l): l< \rho\}$ of all pairs of the form $(\bar{\sigma}, h)$ such that $\bar{\sigma} \in \Lambda_\sigma^{F_\alpha}(q_\alpha)$ and $h \in {^{F_\alpha}2}$. Note that the ordinals $\rho$ is $<\kappa$.

Inductively, we will construct a sequence $\{r^\alpha_l:l<\rho\}$ of conditions below $q_\alpha$ satisfying:

\begin{enumerate}
    \item $r^\alpha_0 = q_\alpha$,
    \item $r^\alpha_{l+1} \leq_{F_\alpha,\gamma +\alpha} r^\alpha_{l}$,
    \item $(r^\alpha_{l+1})_{\bar{\sigma}_l}^{h_l}$ forces a value $\check{x}(\alpha,l)$ for $\chi_{\dot{X}}\upharpoonright \alpha$,
    \item For $l$ limit ordinal $r^\alpha_l = \bigwedge_{k< l} r^\alpha_k$.
\end{enumerate}

It is enough to explain how the successor step is built: Suppose that we have constructed $r^\alpha_l$ satisfying the conditions above and consider the pair $\gamma_l =(\bar{\sigma}, h)$, then find a condition $w_l \leq (r_l)_{\bar{\sigma}}^{h}$ forcing a value $x(\bar{\sigma},l)$ for $\chi_{\dot{X}}\upharpoonright \alpha$. Note that $w_l$ is clearly not a condition that satisfies (2), so we build $r^\alpha_{l+1}$ as follows: $\supp(r^\alpha_{l+1})=\supp(w_l)$ and 

\[r^\alpha_{l+1}(i) =
\begin{cases}
(w_l(i)) \cup \{ (q_\alpha(i))_{\tau ^\frown j}: \tau \in \split_\alpha (r_l(i)) \setminus \sigma_i \hbox{ or } j= 1-h(i)  \} & \text{if } i \in F_\alpha  \\
 w_l(i) & \text{ otherwise }
\end{cases}
\]
\medskip

Note that this is now a condition satisfying the properties above. Put $q_{\alpha+1}= \bigwedge_{l< \rho} r^\alpha_l$ $F_{\alpha+1}=F_\alpha\cup\{\min(\supp(q_{\alpha+1})\backslash F_\alpha)\}$, $\eta_{\alpha+1} = \eta_\alpha +1$ and $g_{\alpha+1} = g_\alpha \cup \{(\min(\supp(q_{\alpha+1})\backslash F_\alpha), \eta_\alpha)\}$. 
\medskip

In order to finish the proof, we check that the condition $q$ is indeed preprocessed for $(F, \dot{X})$. Fix $\alpha< \kappa$ and $\bar{\sigma} \in \Lambda_{\alpha}^{F}(q)$. First, extend $\bar{\sigma}$ to a sequence $\bar{\sigma}'= \bar{\sigma}^\frown \bar{\rho}_0$ in $\Lambda_{\alpha}^{F_\alpha}(q)$ by adding a fix tail $\bar{\rho}_0$ of splitting nodes in $q(i)$ at level $\alpha$ for each $i \in F_\alpha \backslash F$. 
 
Since $q \leq_{\alpha, F_\alpha} q_\alpha$ we get that $\Lambda_{\alpha}^{F_\alpha}(q)= \Lambda_{\alpha}^{F_\alpha}(q_{\alpha})$ and so $\bar{\sigma}' \in \Lambda_{\alpha}^{F_\alpha}(q_{\alpha})$ and we can use the fusion properties to get that there is a $x_{\bar{\sigma}'} \in {^\alpha 2}$ such that $(q_{\alpha+1})_{\bar{\sigma}'} \Vdash \chi_{\dot{X}}\upharpoonright (\alpha) = \check{x}_{\bar{\sigma}'}$, so we finally use that the condition $q_{\bar{\sigma}'} \leq (q_{\alpha+1})_{\bar{\sigma}'}$ to get that $(q)_{\bar{\sigma}'} \Vdash \chi_{\dot{X}}\upharpoonright (\alpha) = \check{x}_{\bar{\sigma}'}$ as we wanted.

\end{proof}

\begin{cor}
    Let $p\in\mathbb{S}_\kappa^\lambda$ and let $\dot{X}$ be a $\mathbb{S}^\lambda_\kappa$-name for a subset of $\kappa$, then we can find $q \leq p$ such that $q$ is preprocessed for $\dot{X}$.
\end{cor}

\begin{proof}
 Recall first that there are $\kappa$-many sets $G \subseteq \dom(p)$ so that $\lvert G \lvert < \kappa$, so let $(G_\beta: \beta <\kappa)$ be an enumeration of them. We can use build a fusion sequence $(q_\alpha, F_\alpha)$ below $p$ such that for all $\alpha< \kappa$, $q_{\alpha + 1}$ is preprocessed for $(\dot{X}, G_\alpha)$.
 
 Start with $q_0 = p$ and $F_0 = G_0$. The limit step is built as usual and it is left to explain how the successor case is constructed: Suppose we have already defined $q_\alpha$ and $F_\alpha$, then we use the Lemma above for $\dot{X}$, $q_\alpha$ and $F_\alpha$ to get a condition $r \leq_{\alpha, F_\alpha} q_\alpha$ that is preprocessed for $(\dot{X}, G_\alpha)$. Define then $q_{\alpha+1} =r$ and $F_{\alpha+1} = F_\alpha \cup G_\alpha$.
 
 We claim that the fusion $q$ of the sequence defined above is preprocessed for all $G_\alpha$'s: Indeed, if $\alpha$ is fixed we know that $q_{\alpha+1}$ is preprocessed for $(\dot{X}, G_\alpha)$, then it is enough to notice that since $q \leq_{\alpha+1, F_{\alpha+1}} q_{\alpha+1}$, then $q$ is also preprocessed for $(\dot{X}, G_\alpha)$.
\end{proof}

\subsection{Outer hull}

Below, we introduce the notion of an outer hull, making explicit some well-known techniques. In the consistency proof of $\mathfrak{i}<\mathfrak{u}$ from~\cite{SS}, outer hulls appear on page 440. Other more recent applications of the notion can be found for example  in~\cite{RestMad}.

\begin{defi}\hfill
\begin{itemize}
    \item Let $p\in \mathbb{S}_\kappa$ and let $\dot{X}$ be a $\mathbb{S}_\kappa$-name for a subset of $\kappa$. For each $t\in\hbox{split}_\alpha(p)$, we refer to the set 
$Y_t=\{\beta\in\kappa: p_t\not\Vdash\check{\beta}\notin\dot{X}\}$, as the 
outer hull of $\dot{X}$ below $p_t$. Moreover, if $q_{t,\beta}\leq p_t$ and $q_{t,\beta}\Vdash\check{\beta}\in\dot{X}$,  we say that $q_{t,\beta}$ is a witness for $\beta\in Y_t$.
    \item Let $p\in \mathbb{S}^\lambda_\kappa$ and let $\dot{X}$ be a $\mathbb{S}^\lambda_\kappa$-name for a subset of $\kappa$. For all $\alpha< \kappa$, $F \subseteq \supp(p)$ such that $\lvert F \lvert < \kappa$ and $\bar{\sigma} \in \Lambda^F_\alpha(p)$ we refer to the set 
$Y_{\bar{\sigma}}=\{\beta\in\kappa: p_{\bar{\sigma}}\not\Vdash\check{\beta}\notin\dot{X}\}$, as the 
outer hull of $\dot{X}$ below $p_{\bar{\sigma}}$. Moreover, if $q_{\bar{\sigma},\beta}\leq p_{\bar{\sigma}}$ and $q_{\bar{\sigma},\beta}\Vdash\check{\beta}\in\dot{X}$,  we say that $q_{\bar{\sigma},\beta}$ is a witness for $\beta \in Y_{\bar{\sigma}}$.
\end{itemize}
\end{defi}

\begin{remark} Let $\dot{X}$ be a $\mathbb{S}_\kappa$-name for a subset of $\kappa$. Suppose $Y_t$ is the outer hull of $\dot{X}$ below $p_t$. If $p_t\Vdash\check{\beta}\in\dot{X}$, then $p_t$ is a witness to $\beta\in Y_t$
and so $p_t\Vdash \dot{X}\subseteq \check{Y}_t$. Analogously for $\mathbb{S}^\lambda_\kappa$ and  $Y_{\bar{\sigma}}$.
\end{remark}

We proceed with the following lemma.

\begin{lem} \hfill
\begin{enumerate}
    \item Let $p\in\mathbb{S}_\kappa$ be preprocessed for $\dot{X}$ and for each $t\in\split(p)$ let $Y_t$ be the outer hull of $\dot{X}$ below $p_t$. Then for each $\alpha< \kappa$, $t\in\hbox{split}_\alpha(p)$ and $\beta\in Y_t$ there is a $r(t,\beta)\in p$ such that $t$ is an initial segment of $r(t,\beta)$ and $p_{r(t,\beta)}\Vdash\check{\beta}\in\dot{X}$. Moreover, for each $t\in\hbox{split}(p)$ and each $\beta\in  Y_t$ there is $r=r(t,\beta)\in \hbox{split}_\beta(p)$ such that $p_r\Vdash \check{\beta}\in\dot{X}$.
    \item Let $p\in\mathbb{S}^\lambda_\kappa$ be preprocessed for $(\dot{X}, F)$ (where $F \subseteq \supp(p)$ and $\lvert F\lvert< \kappa$) and for each $\bar{\sigma} \in \Lambda^F_\alpha(p)$ let $Y_{\bar{\sigma}}$ be the outer hull of $\dot{X}$ below $p_{\bar{\sigma}}$. Then for each $\alpha< \kappa$, $\bar{\sigma} \in \Lambda^F_\alpha(p)$, $\beta\in Y_{\bar{\sigma}}$ and $i \in F$ there are $F' \supseteq F$, $\lvert F' \lvert<\kappa$ and $\bar{\tau}' \in \Lambda_\alpha^{F'}(p)$ such that if $\bar{\tau}'=(r_i(\bar{\sigma}, \beta) : i \in F')$ then $r_i(\bar{\sigma},\beta)\in p(i)$ such that $\sigma_i$ is an initial segment of $r_i(\bar{\sigma},\beta)$ for all $i \in F$ and $p_{\bar{\tau}'}\Vdash\check{\beta}\in\dot{X}$. Moreover, for each $\bar{\sigma} \in \Lambda^F_\alpha(p)$, $\beta\in  Y_{\bar{\sigma}}$ and $i \in F$ there are $F' \supseteq F$, $\lvert F' \lvert<\kappa$, $\bar{\tau}' \in \Lambda_\alpha^{F'}(p)$  such that if $\bar{\tau}=(r_i(\bar{\sigma}, \beta) : i \in F')$ where $r_i=r_i(\bar{\sigma},\beta)\in \hbox{split}_\beta(p)(i)$ for all $i\in F$, then $p_{\bar{\tau}'}\Vdash \check{\beta}\in\dot{X}$.
\end{enumerate}
\end{lem}
\begin{proof}
$(1)$ Fix $\alpha,t$ and $\beta\in Y_t$. Then there is $q \leq p_{t}$ such that $q\Vdash \beta \in\dot{X}$.  Take any $r= r(t, \beta)\in \hbox{split}_\beta(q)$. Note that $t\unlhd r$ and there is $\beta'\geq \beta$ such that $r\in \hbox{split}_{\beta'}(p)$. On the other hand, because $p$ is preprocessed we can use Lemma~\ref{preprocessed} to find $x_r \subseteq {^{\alpha'} 2}$ such that
$p_r\Vdash \chi_{\dot{X}} \upharpoonright \beta' = \check{x}_r$. Since $q_r\leq p_r$ we must have that $ \check{x}_t(\beta)=1$. Thus $p_r\Vdash \beta\in \dot{X}$.
	
Moreover, if $r^*\unlhd r$ and $r^*\in\hbox{split}_{\beta+1}(p)$ then already $p_{r^*}\Vdash \check{\beta}\in\dot{X}$. Indeed, since $p$ is preprocessed there is an $x_{r^*} \in {^{\beta} 2}$ so that $p_{r^*} \Vdash \chi_{\dot{X}} \upharpoonright_{\beta+1} \in \check{x}_{r^*}$, thus $p_r$ forces a value for $\chi_{\dot{X}}(\beta)$ which has to be one because $q_r \leq q,p_r$ and so $p_r \Vdash \check{\beta} \in \dot{X}$. In particular, if $\beta\leq \Sl(t,p)$ then in fact $p_t\Vdash \check{\beta}\in \dot{X}$.

\medskip
\noindent
$(2)$ In the same way, fix now $\alpha, F$ and $\bar{\sigma} \in \Lambda_\alpha^F(p)$. Then there is $q \leq p_{\bar{\sigma}}$ such that $q\Vdash \beta \in\dot{X}$. For all $i \in F$ take any $r_i= r_i(\bar{\sigma}, \beta)\in \hbox{split}_\beta(q(i))$. Note that $\sigma_i\unlhd r_i$ for all $i \in F$. We can find $\beta'\geq \beta$ such that $r_i \in \hbox{split}_{\beta'}(p(i))$ uniformly for all $i \in F$. Hence if $\bar{\tau}=(r_i(\bar{\sigma}, \beta) : i \in F)$, by Lemma~\ref{preprocessed} there are $F' \supseteq F$, $\bar{\tau}' \in \Lambda_\alpha^{F'}(p)$ and $x_{\bar{\tau}'} \in {^{\beta'} 2}$ such that $p_{\bar{\tau}'}\Vdash \chi_{\dot{X}} \upharpoonright \beta' = \check{x}_{\bar{\tau}}$. Let $y= \chi_{\dot{X}} \upharpoonright \beta'$. Since $q_{\bar{\tau}'}\leq p_{\bar{\tau}'}$ we must have that $ y(\beta)=1$. Thus $p_{\bar{\tau}'}\Vdash \beta\in \dot{X}$. The argument for the moreover part is the same as for part (1). 
\end{proof}
	
\begin{defi} For $p\in\mathbb{S}_\kappa^\lambda$ and  $F\in[\hbox{supp}(p)]^{<\kappa}$, let $\Lambda(F)=\bigcup_{\alpha<\kappa}\Lambda^F_\alpha$. For	$\bar{\sigma}$ and $\bar{\tau}$ in $\Lambda(F)$ we say that $\bar{\tau}\unlhd\bar{\sigma}$ if and only if for each $i\in F(\bar{\tau}(i)\unlhd\bar{\sigma}(i))$.
\end{defi}

\begin{cor}\label{preprocessed_hull} $ $
\begin{enumerate}
\item Let $\dot{X}$ be a $\mathbb{S}_\kappa$-name for an infinite subset of $\kappa$. If $p\in \mathbb{S}_\kappa$ is preprocessed for $\dot{X}$, $t\in\split(p)$ and $Y_t$ is the outer hull of $\dot{X}$ below $p_t$ then	$$Y_t=\{\beta< \kappa:\exists r\in \hbox{split}(p)\hbox{ such that }
t\unlhd r
\hbox{ and }p_r\Vdash\check{\beta}\in\dot{X}\}.$$	
\item Let $\dot{X}$ be a $\mathbb{S}_\kappa^\lambda$-name for an infinite subset of $\kappa$. If $p\in\mathbb{S}_\kappa^\lambda$ is preprocessed for $(\dot{X}, F\in[\hbox{supp}(p)]^{<\kappa})$ and $\bar{\sigma}\in\Lambda(F)$, then
$$Y_{\bar{\sigma}}=\{\beta\in\kappa:\exists \bar{\tau}\in \Lambda(F') \hbox{ for } F' \supseteq F, \lvert F'\lvert <\kappa \hbox{ such that  } \bar{\sigma}\sqsubseteq \bar{\tau}
\hbox{ and }p_{\bar{\tau}}\Vdash\check{\beta}\in\dot{X}\}.$$
\end{enumerate}
\end{cor}

\section{$\kappa$-Sacks indestructibility}\label{Sacks_indestructibility_section}

In this section we set out to obtain our main result, namely the relative consistency of $\mathfrak{i}(\kappa)<2^\kappa$. For this we start with a measurable cardinal $\kappa$ and a normal measure $\calU$ on $\kappa$. Using the forcing notion $\mathbb{P}_\calU$ we adjoin a $\calU$-supported $\kappa$-maximal independent family, which by Lemma~\ref{ast_0} is densely maximal. A key feature of our proof is Lemma~\ref{equiv} which gives an equivalent characterisation of dense maximality via the property $(\ast)$ of the same Lemma. In fact, to show that $\calA=\calA_G$ for a $\mathbb{P}_\calU$-generic filter $G$ remains maximal after forcing with a large product of $\kappa$-Sacks forcing, we show that the property $(\ast)$ is preserved. With other words, we show that in the final generic extension a $\kappa$-independent family which we adjoin via forcing at an initial step of the construction satisfies property $(\ast)$. Note that in difference with the original work of Shelah~\cite{SS}, we are only interested in products of $\kappa$-Sacks forcing and not iterations.

\begin{thm}\label{preservation.thm} (GCH) Let $\kappa$ be a measurable cardinal, $\calU$ a normal measure on $\kappa$ and let $G$ be $\mathbb{P}_\calU$-generic filter over the ground model $V_0$. Let $\calA=\calA_G$ and $V=V_0[G]$. Then $$V^{\bbS_{\kappa}}\vDash\calA\hbox{ is a densely maximal independent family}.$$
\end{thm}
\begin{proof} Note that GCH holds in $V$ and $\kappa$ is inaccessible in $V$. By Lemma~\ref{ast_0} the family $\calA$ is densely maximal in $V$. To prove that $(\calA\hbox{ is densely maximal})^{V^{\mathbb{S}_\kappa}}$ we will show that in $V^{\mathbb{S}_\kappa}$, property $(*)$ of $\calA$ from Lemma~\ref{equiv} holds. More precisely, we will show that in $V^{\mathbb{S}_\kappa}$ for each $X\subseteq\kappa$ and each $h\in\FF_{<\omega,\kappa}(\calA)$ such that $X\subseteq\calA^h$ property $(*)_{X,h}$ holds, where
			
\medskip
\noindent
$(*)_{X,h}$ either $\exists B\in\id_{<\omega,\kappa}(\calA)$ such that $\calA^h\backslash X\subseteq B$ or there is $h'\supseteq h$ such that $\calA^{h'}\subseteq\calA^h\backslash X$.
		
\medskip		
\noindent			
Suppose not. Thus, there are $X\subseteq\kappa$, $h\in\FF_{<\omega,\kappa}(\calA)$ such that $X\subseteq \calA^h$ and $\lnot(*)_{X,h}$. That is,
$$V^{\mathbb{S}_\kappa}\vDash X\subseteq\calA^h\land \calA^h\backslash X\notin\id_{<\omega,\kappa}(\calA)\land\forall h'\supseteq h(\calA^{h'}\cap X\neq\emptyset).$$
Let $\dot{X}$ be a $\mathbb{S}_\kappa$-name for $X$ in $V$ and let $p\in\mathbb{S}_\kappa$ force the above. By Lemma~\ref{preprocessed} we can assume that $p$ is preprocessed for $\dot{X}$ and by Corollary~\ref{preprocessed_hull} that for each $t\in\split(p)$ for the outer hull $Y_t$ of $\dot{X}$ below $p_t$ is of the form
$Y_t=\{\beta< \kappa:\exists r\in \hbox{split}(p)\hbox{ such that }t\unlhd r\hbox{ or }r\unlhd t\hbox{ and }p_r\Vdash\check{\beta}\in\dot{X}\}$.

\begin{clm}\label{Claim43} Let $t\in\split(p)$. Then $Y_t\subseteq \calA^h$.
\end{clm}			
\begin{proof}
	Let $m\in Y_t$. Thus there is $q_{t,m}\leq p_t$ such that $q_{t,m}\Vdash \check{m}\in\dot{X}$. But $p_t\Vdash\dot{X}\subseteq \calA^h$ and so $m$ must be an element of $\calA^h$.
\end{proof}

We will make use of the following function $H\in {^\kappa\kappa}\cap V$. Given $t\in\split(p)$ and $\beta\in Y_t$, let $r(t,\beta)$ be a witness to $\beta\in Y_t$ such that $t$ is comparable with $r(t,\beta)$ and $r(t,\beta)$ is of least splitting level. Then define 
$$H(\gamma)=\sup\{\gamma+1\}\cup\{\Sl(r(t,\beta)): t\in\hbox{split}_\gamma(p), \beta\leq \gamma \},$$
where if $r(t,\beta)$ is not defined, i.e. $\beta \notin Y_t$, then we take $\Sl(r(t,\beta))=0$.

Fix $t\in\split(p)$. Since $Y_t\subseteq \calA^h$, by the dense maximality of $\calA$ in $V$ either there is $B\in\id_{<\omega,\kappa}(\calA)$ such that $\calA^h\backslash Y_t\subseteq B$ or there is $h'\supseteq h$ such that $\calA^{h'}\subseteq \calA^h\backslash Y_t$. In the latter case, $\calA^{h'}\cap Y_t=\emptyset$ and since $p\Vdash\dot{X}\subseteq\check{Y}_t$, we obtain that $p\Vdash\calA^{h'}\cap\dot{X}=\emptyset$, contrary to the choice of $p$. Thus, we can assume that $\forall t\in\split(p)\exists B_t\in\id_{<\omega,\kappa}(\calA)$ such that $\calA^h\backslash Y_t\subseteq B_t$, $\calA^h\backslash Y_t\in\id_{<\omega,\kappa}(\calA)$. But then $Y_t\cup\kappa\backslash \calA^h\in\fil_{<\omega,\kappa}(\calA)$. Now, since $\fil_{<\omega,\kappa}(\calA)$ is a $\kappa$-p-set, there is $C\in\fil_{<\omega,\kappa}(\calA)$ such that $C\subseteq^* Y_t\cup\kappa\backslash\calA^h$ for each $t\in\split(p)$. Thus in particular, $C\cap\calA^h\subseteq^* Y_t$ for all $t\in\split(p)$ and so we can find a function $f\in V\cap{^\ka\ka}$ such that
$$\forall \alpha\in\kappa \,\,\forall t\in\hbox{split}_{\alpha+1}(p) \,\,(C\cap\calA^h)\backslash Y_t\subseteq f(\alpha).$$
Equivalently, for each $\alpha\in\kappa$, 
$$(C\cap\calA^h)\backslash f(\alpha)\subseteq \bigcap_{t\in\split_{\alpha+1}(p)}Y_t.$$
Moreover, we can assume that $H(\alpha)+1 < f(\alpha)$ and that $f$ is strictly increasing.

Now, we use Corollary~\ref{closure_club} for the strictly increasing function $f^2=f\circ f$, i.e. the composition of $f$ with itself. Then there is a set $C^*\in\fil_{<\omega,\kappa}(\calA)$ such that $\forall \alpha\in C^*\forall \gamma\in \alpha\cap C^* (f^2(\gamma)<\alpha)$.  Now, let $C'=C \cap C^* \cap (f(1),\kappa)$. Thus $C'\in\fil_{<\omega,\kappa}(\calA)$. Let $\{k(\alpha): \alpha <\kappa\}$ be an increasing enumeration of $C' \cap \calA^h$. Since $C'\in\fil_{<\omega,\kappa}(\calA)$ the latter set is indeed unbounded in $\kappa$. Recursively, we will define a fusion sequence $\tau=\langle q_\alpha:\alpha\in\kappa\rangle$ below $p$ such that:

\begin{enumerate}
    \item $q_{\alpha+1} \leq_\alpha q_\alpha$,
    \item $q_{\alpha+1} \Vdash k(\alpha) \in \dot{X}$,
    \item If $q$ is the fusion of $\tau$ then $q\Vdash C'\cap \calA^h\subseteq\dot{X}$.
\end{enumerate}

But then, since $q\Vdash\calA^h\backslash\dot{X}\subseteq\calA^h\backslash C'$ and 
$\calA^h\backslash C'\subseteq\kappa\backslash C'\in\id_{<\omega,\kappa}(\calA)$, we obtain that $q\Vdash\calA^h\backslash\dot{X}\in\id_{<\omega,\kappa}(\calA)$, which is again a contradiction to the choice of $p$.

Here is the construction of $\tau$.
Start with $q_0 =p$ and at limits take intersections. Consider $k(0)$ and put $r = \stem(p)$. Since 
$$(C\cap\calA^h)\backslash f(0)\subseteq \bigcap_{t\in\hbox{split}_1(p)} Y_t,$$
for each $j \in\{0,1\}$ and $p$ is preprocessed for $\dot{X}$ there is $r_j(t,k(0))\in\split_{H(k(0))}(p)$ such that $p_{r_j(t,k(0))}\Vdash\check{k}(0)\in\dot{X}$. Let
$q_1=\bigcup\{p_{r_j(t,k(0))}: t\in\split_1(p) , j \in\{0,1\}\}$. Thus $q_1\leq_0 q_0$ as we wanted. 

For completeness we present the construction of the next step: take $k(1)$ and $t \in \split_1(q_1)$. Note that $\split_1(q_1)= \split_{\delta}(p)$ for some $1 \leq \delta \leq H(k(0))\leq f(k(0))$ and since $f^2(k(0)) < k(1)$ and $C \backslash Y_r \subseteq f(f(k(0)))$ for all $r \in \split_{f(k(0))+1}(p)$, we obtain $k(1) \in Y_r$ for all $r \in \split_{f(k(0))+1}(p)$. Using the fact that $p$ is preprocessed and repeating the argument above, find for each $j \in\{0,1\}$ an extension $r_j(t,k(1))\leq p_{t^\frown j}$ such that $r_j(t,k(1)) \Vdash k(1) \in \dot{X}$. Let $q_2 = \bigcup\{r_j(t,k(1)): t \in \split_1(q_1) \wedge j \in\{0,1\} \}$. Then $q_2$ is a condition, $q_2 \leq_1 q_1$ and $q_2 \Vdash k(1) \in \dot{X}$.

In general, suppose we have constructed $q_\alpha$ and consider $k(\alpha)$, and $t \in \split_\alpha(q_\alpha)$, then there is $\delta \geq \alpha$ so that $t \in \split_\delta (p)$ and $\delta \leq H(k(\alpha))$. Again, since $f^2(k(\alpha)) < k(\alpha+1)$ and $C \backslash Y_r \subseteq f(f(k(\alpha)))$ for all $r \in \split_{f(k(\alpha))+1}(p)$ we get that $k(\alpha) \in Y_t$, so again use that $p$ is preprocessed and repeat the argument above to find conditions $r_j(t,k(\alpha)) \leq p_{t^\smallfrown j}$ forcing $r_j(t,k(\alpha)) \Vdash k(\alpha) \in \dot{X}$ $j \in\{0,1\}$. Put $q_{\alpha+1} = \bigcup\{r_j(t,k(\alpha)): t \in \split_\alpha(q_\alpha) \wedge j \in\{0,1\} \}$. Then $q_{\alpha+1}$ is a condition, $q_{\alpha+1} \leq_{\alpha} q_\alpha$ and $q_{\alpha+1} \Vdash k(\alpha) \in \dot{X}$.

\end{proof}

\begin{thm}\label{main} The generic maximal independent family adjoined by $\bbP_\calU$ over a model $V_0$ of GCH remains maximal after the $\kappa$-support product $\mathbb{S}_\kappa^\lambda$.
\end{thm}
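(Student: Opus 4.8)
The plan is to show that the densely maximal $\kappa$-independent family $\calA_G$ from the $\bbP_\calU$-extension $V_0 = V[G]$ remains maximal in $V_0^{\bbS_\kappa^\lambda}$. The key structural fact, established in the earlier sections, is that the dual filter $\fil_{<\omega,\kappa}(\calA_G)$ is generated by $\calF_G$, which is both a $\kappa$-P-set (Lemma~\ref{P-set}) and a $\kappa$-Q-set (Corollary~\ref{Q-set}), and whose members meet every boolean combination $\calA_G^h$ on an unbounded set. The maximality criterion I would use is the dense maximality of $\calA_G$ (Lemma~\ref{ast_0}): to show $\calA_G$ stays maximal, it suffices to show that for every $\bbS_\kappa^\lambda$-name $\dot X$ for a new element of $[\kappa]^\kappa$ and every condition $p$, there is an extension $q \le p$ and some $h \in \FF_{<\omega,\kappa}(\calA_G)$ forcing that either $\calA_G^h \cap \dot X$ or $\calA_G^h \setminus \dot X$ is bounded; equivalently, I want to find a ground-model boolean combination that the generic $\dot X$ fails to split.

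First I would set up a fusion argument tailored to $\bbS_\kappa^\lambda$. Because $\kappa$ is strongly inaccessible and we work under GCH, we may use the ordinary fusion $p \le_\alpha q$ described just above the theorem rather than a generalized one, and combine it with an enumeration of the finitely many relevant coordinates in the support. The core idea, generalizing Shelah's classical Sacks-indestructibility construction, is to build a fusion sequence $\{p_\al\}_{\al<\kappa}$ together with a decreasing-mod-bounded sequence of sets $\{A_\al\}_{\al<\kappa}$ from $\calF_G$ such that along the fusion each splitting node of each relevant Sacks tree decides membership of more and more ordinals from $A_\al$ in $\dot X$. The strong-semi-selector (Q-set) property is what lets me do this coordinate-by-coordinate: given a bounded partition of $\kappa$ induced by the nodes at a given fusion level, I can find $A \in \calF_G$ meeting each piece in at most two points, so that the finitely many surviving branches of the product tree can only disagree about the $\dot X$-membership of boundedly many elements of $A$.

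Next, after the fusion I would take $A^* \in \calF_G$ to be a pseudo-intersection of the sequence $\{A_\al\}_{\al<\kappa}$, which exists by the P-set property. The upshot of the construction is that the fusion limit $q = \bigwedge_\al p_\al$ forces a ground-model function assigning to each element of $A^*$ (outside a bounded set) a decided value $0$ or $1$ for its $\dot X$-membership; that is, $q$ forces $\dot X \cap A^* =^* Y$ for some ground-model $Y \subseteq A^*$. Since $A^* \in \calF_G$, its dual complement lies in the density ideal, and membership of $A^*$ meets every $\calA_G^h$ unboundedly. I would then apply the dense maximality of $\calA_G$ in $V_0$ to the ground-model set $Y$: there is $h \in \FF_{<\omega,\kappa}(\calA_G)$ with $\calA_G^h \cap Y = \emptyset$ or $\calA_G^h \subseteq Y$ (after intersecting with $A^*$ and using that $\calA_G^h \cap A^*$ is unbounded). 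Either outcome exhibits a ground-model boolean combination on which $\dot X$ is, modulo a bounded set, constant, contradicting the assumption that $\calA_G \cup \{\dot X\}$ is independent.

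The main obstacle I expect is organizing the fusion so that the finitely many branches surviving at each level interact correctly with a single set from $\calF_G$: the $\kappa$-support product means that at a fusion level the splitting is happening simultaneously across a bounded set of coordinates, so the relevant partition of $\kappa$ whose semi-selector I extract must be the one refined by the joint splitting pattern of all active coordinates. Ensuring this partition is genuinely a \emph{bounded} partition (so that the Q-set property applies) requires that at each level only boundedly many ordinals are being split and that the tree structure of each $\kappa$-Sacks condition is suitably ``$\le_\al$-thin''; this is precisely where the strong inaccessibility of $\kappa$ and the two-point semi-selector bound are used to keep the number of branch-disagreements bounded. Verifying that the fusion limit is a genuine condition in $\bbS_\kappa^\lambda$ and that $A^*$ really diagonalizes the whole sequence (rather than merely each $A_\al$ separately) is the delicate bookkeeping I would need to carry out carefully, but no new idea beyond the P-set and Q-set properties should be required.
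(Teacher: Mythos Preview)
Your plan has the right ingredients (dense maximality, the P-set and Q-set properties of $\calF_G$, a fusion over $\bbS_\kappa^\lambda$), but the way you order them leaves a genuine gap: the step in which the fusion limit $q$ is supposed to force $\dot X\cap A^* =^* Y$ for some ground-model $Y$ cannot be carried out. A name like the $\kappa$-Sacks generic real itself is not almost equal to any ground-model set on any unbounded $A^*$, and a fusion cannot fix this, because preserving $\leq_\alpha$ forbids pruning the $2^\alpha$ level-$\alpha$ nodes needed to force all branches to agree on $\dot X(m)$. Your phrase ``a bounded partition of $\kappa$ induced by the nodes at a given fusion level'' does not describe a bounded partition of $\kappa$ (tree nodes are sequences, not intervals of ordinals), and at level $\alpha$ there are $2^\alpha<\kappa$, not finitely many, branches; so the Q-set property has nothing to act on there.

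The paper runs the argument in the opposite order. Assuming $(\ast)_\lambda$ fails, it fixes $h$, $\dot X$, $p$ and defines for each $l<\kappa$ the ground-model outer approximation
\[
Y_l=\{m\in\kappa:\exists q\leq_{l+1}p\ (q\not\Vdash \check m\notin\dot X)\},
\]
so that $p\Vdash\dot X\subseteq Y_l\subseteq\calA^h$. Dense maximality $(\ast)_0$ is applied to each $Y_l$ separately, and the alternative $(\beta)_l$ contradicts the assumption on $p$; hence $\calA^h\setminus Y_l\in\id_{<\omega,\kappa}(\calA_G)^{V_0}$ for every $l$. The P-set property now yields a single $C\in\calF_G$ with $C\cap\calA^h\subseteq^* Y_l$ for all $l$; the bounds $f(l)\supseteq (C\cap\calA^h)\setminus Y_l$ determine an interval partition $\calE$ of $\kappa$, and the Q-set property is used \emph{once}, on this $\calE$, to obtain a strong-semi-selector $D\in\calF_G$ whose $j$-th block $M_j$ lies in $Y_{j+2}$. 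The fusion is then run to force $D\subseteq\dot X$: since $M_j\subseteq Y_{j+2}$, for each of the at most two points $m_j^0,m_j^1$ there is a $\leq_{j+2}$-extension pushing it into $\dot X$, and these are amalgamated to $q_{j+1}\leq_{j+1}q_j$. The contradiction comes from $(\calA,D)\in G$, so $\kappa\setminus D\in\id_{<\omega,\kappa}(\calA_G)^{V_0}$, against property (2) of $\bar p$. Note that the fusion only forces the one-sided inclusion $D\subseteq\dot X$; it never attempts to make $\dot X$ ground-model on any set.
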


\begin{proof}
The idea for this proof is not much different than the one for the successor step, however the fusion argument has to be handled more carefully. We give now an outline with the most important details. As in the case above start with a densely maximal independent family $\calA$ in $V=V^{\mathbb{P}_\calU}$. To prove that $$(\calA\hbox{ is densely maximal independent family})^{V^{\mathbb{S}_\kappa^\lambda}}$$ we will show that in $V^{\mathbb{S}_\kappa^\lambda}$, property $(*)$ from Lemma~\ref{equiv} holds for the family $\calA$. Specifically, we show that in $V^{\mathbb{S}_\kappa^\lambda}$ for each $X\subseteq\kappa$ and each $h\in\FF_{<\omega,\kappa}(\calA)$ such that $X\subseteq\calA^h$ property $(*)_{X,h}$ holds, where
$(*)_{X,h}$ states:
$$\hbox{either }\exists B\in\id_{<\omega,\kappa}(\calA)\hbox{ such that }\calA^h\backslash X\subseteq B\hbox{ or there is }h'\supseteq h\hbox{ such that }\calA^{h'}\subseteq\calA^h\backslash X.$$

\noindent
Suppose not. Thus, there are $X\subseteq\kappa$, $h\in\FF_{<\omega,\kappa}(\calA)$ such that $X\subseteq \calA^h$ and $\lnot(*)_{X,h}$. That is,
$$V^{\mathbb{S}_\kappa^\lambda}\vDash X\subseteq\calA^h\land \calA^h\backslash X\notin\id_{<\omega,\kappa}(\calA)\land\forall h'\supseteq h(\calA^{h'}\cap X\neq\emptyset).$$
Let $\dot{X}$ be a $\mathbb{S}_\kappa^\lambda$-name for $X$ in $V$ and let $p\in\mathbb{S}_\kappa^\lambda$ force the above. Passing to a stronger condition if necessary we can assume that $p$ is preprocessed for $\dot{X}$. Now, for every $\alpha< \kappa$ and all $F \subseteq \supp(p)$ such that $\lvert F \lvert < \kappa$ consider the set $\Lambda^F_\alpha(p)$. This is a set of size $<\!\kappa$ because $\lvert \split_\al(p(i)) \lvert \leq 2^{\lvert \al \lvert}$ and $\lvert F \lvert < \ka$. Also, $\{\Lambda^F_\alpha(p): \alpha<\kappa \wedge F \subseteq \supp(p) \wedge \lvert F \lvert < \kappa \}$ has size $\ka$ and so does its union.


\medskip
Similarly to Claim \ref{Claim43} we obtain:
\begin{clm} For all $\alpha< \kappa$ and $F \subseteq \supp(p)$ such that $\lvert F \lvert <\kappa$, if $\bar{\sigma} \in \Lambda^F_\alpha(p)$ then $Y_{\bar{\sigma}}\subseteq \calA^h$.
\end{clm}

Before proceeding with construction of a fusion sequence, which will lead to the desired contradiction, we need to define one more auxiliary object, namely the function $H$ defined below. Whenever $\bar{\sigma} \in \Lambda^F_\alpha(p)$ and $\beta\in Y_{\bar{\sigma}}$, let $r(\bar{\sigma},\beta)$ be a witness to $\beta\in Y_{\bar{\sigma}}$ such that for each $i \in F $ the stem of the condition $r(\bar{\sigma},\beta)(i)$ is of minimal height. Define $H\in {^\kappa\kappa}\cap V$ as follows:
$$H(\gamma)=\sup\{\gamma+1\}\cup\{\Sl(\stem(r(\bar{\sigma},\beta))): \bar{\sigma} \in \Lambda_\gamma^F(p), \beta\leq \gamma, F\subseteq \supp(p)\cap \gamma \hbox{ and } \lvert F \lvert <\kappa \}.$$

Again, following the argument for the single step, we can assume that for all $\alpha<\kappa$ and all $\bar{\sigma} \in\Lambda_\alpha^F(p)$ there exists $B_{\bar{\sigma}}\in\id_{<\omega,\kappa}(\calA)$ such that $\calA^h\backslash Y_{\bar{\sigma}} \subseteq B_{\bar{\sigma}}$, $\calA^h\backslash Y_{\bar{\sigma}}\in\id_{<\omega,\kappa}(\calA)$. But then $Y_{\bar{\sigma}}\cup\kappa\backslash \calA^h\in\fil_{<\omega,\kappa}(\calA)$. Now, since $\fil_{<\omega,\kappa}(\calA)$ is a $\kappa$-p-set, there is $C\in\fil_{<\omega,\kappa}(\calA)$ such that $C\subseteq^* Y_{\bar{\sigma}}\cup\kappa\backslash\calA^h$ for each $\bar{\sigma} \in\Lambda_\alpha^F$. Thus in particular, $C\cap\calA^h\subseteq Y_{\bar{\sigma}}$ for each $\bar{\sigma} \in\Lambda_\alpha^F(p)$ and so we can find a function $f\in V\cap{^\ka\ka}$ such that,
$$\forall \alpha\in\kappa \,\,\forall F \subseteq \supp(p) \,\,\forall \bar{\sigma} \in \Lambda^F_{\alpha+1}(p) \,\,(\lvert F \lvert <\kappa \to C\cap (\calA^h\backslash Y_{\bar{\sigma}} )\subseteq f(\alpha))$$

Thus, for each $\alpha\in\kappa$, $\bar{\sigma} \in \Lambda^F_{\alpha+1}(p)$ and $\beta\in C\cap\calA^h$, if $\beta>f(\alpha)$ then $\beta\in Y_{\bar{\sigma}}$. Moreover, we can assume that $f$ is strictly increasing, that $H(\alpha)\leq f(\alpha)$ and $\alpha+2<f(\alpha)$ for all $\alpha\in\kappa$.

\medskip
By Corollary~\ref{closure_club}, there is $C^*\in\fil_{<\omega,\kappa}(\calA)$ such that $\forall \alpha\in C^*\forall \gamma\in \alpha\cap C^* (f^2(\gamma)<\alpha)$. Now, let $C'=C \cap C^* \cap (f(0),\kappa)$. Thus $C'\in\fil_{<\omega,\kappa}(\calA)$. Let
$\{k(\alpha): \alpha \in \kappa\}$ be an increasing enumeration of $C' \cap \calA^h$. Since $C'\in\fil_{<\omega,\kappa}(\calA)$ the latter set is indeed unbounded in $\kappa$.
Recursively, we will define a fusion sequence $\tau=\langle q_\alpha, F_\alpha:\alpha\in\kappa\rangle \subseteq \mathbb{S}_\kappa^\lambda$ below $p$, ordinals $(\eta_\alpha: \alpha< \kappa)$ and bijections $g_\alpha: F_\alpha \to \eta_\alpha$ such that for all $\alpha<\kappa$:

\begin{enumerate}
    \item $q_{\alpha+1} \leq_{F_\alpha,\alpha} q_\alpha$,
    \item $\eta_\alpha \geq \alpha$,
    \item For all $\alpha< \alpha'<\kappa$, $g_\alpha \subseteq g_{\alpha'}$ and for limit ordinals $\delta< \kappa$ $g_\delta = \bigcup_{\alpha< \delta} g_\alpha$,
    \item $q_{\alpha+1} \Vdash k(\alpha) \in \dot{X}$.
\end{enumerate}

\noindent
Now, if $q$ is the fusion of $\tau$ then $q\Vdash C'\cap \calA^h\subseteq\dot{X}$. But then, since $q\Vdash\calA^h\backslash\dot{X}\subseteq\calA^h\backslash C'$ and 
$\calA^h\backslash C'\subseteq\omega\backslash C'\in\id_{<\omega,\kappa}(\calA)$, we obtain that $q\Vdash\calA^h\backslash\dot{X}\in\id_{<\omega,\kappa}(\calA)$, contradicting the choice of $p$.

\medskip
We proceed with the recursive construction of $\tau$. Start with $q_0 = p$, $F_0= \min(\supp(p))$, $\eta_0 = 0$ and $g_0= \emptyset$. At limit stages $\delta<\kappa$ the construction of $q_\delta$ and $F_\delta$ it is already been determined so that the conditions in Definition~\ref{fusion} are fulfilled. Finally, let $\eta_\delta = \sup_{\alpha<\delta} \eta_\alpha$ and $g_\delta$ as above.
Consider $k(0)$ and put $\bar{\sigma} = \langle \stem(p(\min(\supp(p))))\rangle$. Since for all $\bar{\tau} \in \Lambda^{F_0}_{1}(p)$, $C \setminus Y_{\bar{\tau}} \subseteq f(0)$, $k(0) \geq f(0)$ and $p$ is preprocessed for $(\dot{X}, F_0)$ we have that $k(0) \in Y_{\bar{\tau}}$ for all $\bar{\tau} \in \Lambda^{F_0}_{1}(p)$ and so, by Corollary \ref{preprocessed_hull} for each $h \in ^{F_0} 2$ there exists a set $F_0' \supseteq F_0$, a $\bar{\tau} \sqsupseteq \bar{\sigma}$ and condition $r_h(\bar{\tau}_h,k(0)) \leq p^h_{\bar{\tau}_h}$ such that $r_h(\bar{\tau}_h,k(0)) \Vdash k(0) \in \dot{X}$. 

For completeness, we give a more detailed proof of the existence of the conditions $r_h(\bar{\tau}_h,k(0))$. Recall that $p^h_{\bar{\sigma}}$ is defined as follows:
\[p^h_{\bar{\sigma}}(i) =
\begin{cases}
(p(i))_{ \sigma_i^\smallfrown h(i)} & \text{if } i \in F \\
p(i)  & \text{ otherwise }
\end{cases}
\] 
There are sequences $\bar{\sigma}'_h \in \Lambda^{F_0}_1(p)$ such that $\bar{\sigma}_h \unlhd \bar{\sigma}'_h$ where $\bar{\sigma}_h$ is such that $\sigma_h(i)= \sigma(i)^\frown h(i)$ for all $i \in F_0$. Hence, using that $p$ is preprocessed we can get a set $F_0' \supseteq F_0$ such that $\lvert F_0' \lvert < \kappa$ and sequences $\bar{\tau}_h \sqsupseteq \bar{\sigma}'_h$ so that $p_{\bar{\tau}_h} \Vdash k(0) \in \dot{X}$. Thus, take $r_h(\bar{\tau}_h,k(0))=p_{\bar{\tau}_h}$.

Then if $q_1= \bigcup\{ r_h(\bar{\sigma},k(0)) : h \in {^{F_0}} 2, \bar{\tau}_h \in \Lambda^{F_0'}_0(p) \}$, $F_1= F_0' \cup \{\min\{\supp(q_1) \backslash F_0'\}\}$, $\eta_1= \eta_0 +1$ and $g_1 = g_0 \cup \{ (\min\{\supp(q_1) \backslash F_0', \eta_1)\}$ we have that 
$q_1 \leq_{0,F_0'} q_0\hbox{ and }q_1 \Vdash k(0) \in \dot{X}$ as we wanted. 

\medskip
In general, suppose we have constructed $q_\alpha$, $F_\alpha$, $\eta_\alpha$ and $g_\alpha$ as desired. Consider $k(\alpha)$ and the set $\Lambda^{F_\alpha}_{\alpha+1}(p_\alpha)$. 
Fix an enumeration $\{\gamma_l =(\bar{\sigma}_l, h_l): l< \rho \}$ of all pairs of the form $(\bar{\sigma}, h)$ such that $\bar{\sigma} \in \Lambda_{\alpha+1}^{F_\alpha}(q_\alpha)$ and $h \in {^{F_\alpha}2}$. Note that the ordinal $\rho$ is $<\kappa$. Inductively, we will construct a sequence $\{r^\alpha_l:l<\rho\}$ of conditions below $q_\alpha$ satisfying:

\begin{enumerate}
    \item $r^\alpha_0 = q_\alpha$,
    \item $r^\alpha_{l+1} \leq_{\alpha, F_\alpha} r^\alpha_{l}$,
    \item $(r^\alpha_{l+1})_{\bar{\sigma}_l}^{h_l} \Vdash \check{k}(\alpha) \in \dot{X}$, 
    \item For $l$ limit ordinal $r^\alpha_l = \bigwedge_{k< l} r^\alpha_k$.
\end{enumerate}

It is enough to explain how the successor step is built: Suppose then that we have constructed $r^\alpha_l$ satisfying the conditions above and consider the pair $\gamma_l =(\bar{\sigma}, h)$. Notice that since $\lvert F_\alpha \lvert < \kappa$ for all $i \in F_\alpha$, $\split_\al(q_\alpha(i)) \subseteq \split_\delta(p(i))$ for some $\delta \leq H(k(\alpha))$.  Also, since $f^2(k(\alpha)) < k(\alpha+1)$ and for all $\bar{\tau} \in \Lambda_{f(\alpha)+1}^{F_\alpha}(p)$ we have that $C \backslash Y_{\bar{\tau}} \subseteq f(f(k(\alpha)))$, we obtain that $k(\alpha) \in Y_{\bar{\tau}}$ for all $\bar{\tau} \in \Lambda_{f(\alpha)+1}^{F_\alpha}(p)$. Thus, again we repeat the argument above using the fact that $p$ is preprocessed for $(\dot{X}, F_\alpha)$ and find a set $F_\alpha' \supseteq F_\alpha$, a sequence $\bar{\tau}_h \in \Lambda^{F_\alpha'}_\alpha(p)$ and conditions $w_h(\bar{\tau}_h,k(\alpha)) \leq p^h_{\bar{\tau}_h}$ forcing $w_h(\bar{\tau}_h,k(\alpha)) \Vdash k(\alpha) \in \dot{X}$. Moreover, we can choose $w_h(\bar{\tau}_h,k(\alpha))$ so that $w_h(\bar{\tau}_h,k(\alpha))\leq (q_\alpha)^h_{\bar{\tau}_h}$. Note that $w_h(\bar{\tau}_h,k(\alpha))$ might not yet be a condition satisfying the condition (2). In order to fix this, we define $r^\alpha_{l+1}$ as follows: $\supp(r^\alpha_{l+1})=\supp(w_l)$ and 

\[r^\alpha_{l+1}(i) =
\begin{cases}
(w_h(\bar{\tau}_h,k(\alpha))(i) \cup \{ (q_\alpha(i))_{\rho ^\frown j}: \rho \in \split_\alpha (r_l(i)) \setminus \sigma_i \hbox{ or } j= 1-h(i)  \} & \text{if } i \in F_\alpha'  \\
 w_h(\bar{\sigma},k(\alpha))(i) & \text{ otherwise }
\end{cases}
\]
\medskip

Finally, let $q_{\alpha+1}= \bigwedge_{l< \rho} r^\alpha_l$,  $\eta_{\alpha+1} = \eta_\alpha +1$, $F_{\alpha+1}=F_\alpha'\cup\{\min(\supp(q_{\alpha+1})\backslash F_\alpha')\}$ and let $g_{\alpha+1} = g_\alpha \cup \{(\min(\supp(q_{\alpha+1})\backslash F_\alpha'), \eta_\alpha)\}$. The construction is now complete. Indeed, to see that $q_{\alpha+1} \Vdash \check{k}(\alpha) \in \dot{X}$ notice that for all $\bar{\sigma} \in \Lambda^{F_\alpha}_\alpha(q_{\alpha+1})$ and all $h \in {^{F_\alpha}} 2$, $(q_{\alpha+1})^h_{\bar{\sigma}} \Vdash \check{k}(\alpha) \in \dot{X}$.
\end{proof}

\begin{remark} Note that $\kappa$ might cease to be measurable in $V^{\mathbb{S}_\kappa^\lambda}$ from the above theorem. For a preparation of the universe, which guarantees that $\kappa$ remains measurable see~\cite{SF}.
\end{remark}

\section{Concluding Remarks and Questions}\label{questions_section}

The use of the assumption $2^\kappa=\kappa^+$ played a crucial role in our construction of a densely maximal $\kappa$-independent family. Thus one may ask:

\begin{question}
	Does ZFC imply the existence of a densely maximal $\kappa$-independent 
	families?
\end{question}

Even though we are able to show both that consistently $\mathfrak{i}_f(\kappa)=\kappa^+ < 2^\kappa$ and $\kappa^+<\mathfrak{i}_f(\kappa)=2^\kappa$, the currently available  techniques seem to be insufficient to answer the following:

\begin{question}\label{Mid_Size}
	Let $\kappa$ be a regular uncountable cardinal. Is it consistent that $\kappa^+<\mathfrak{i}(\kappa)<2^\kappa$?
\end{question}

The analogous question in the countable can be answered to the positive with the use of the so called diagonalization filters (see~\cite{VFSS}). A natural generalization of the notion of a diagonalization filter to the uncountable is given below:

\begin{defi}\label{diag_def}
	Let $\calA$ be a $\kappa$-independent family. A $\kappa$-complete filter $\calF$ is said to be an {\emph{$\kappa$-diagonalization filter for $\calA$}} if $\forall F\in\calF\forall h\in\FF_{<\omega,\kappa}(\calA)|F\cap\calA^h|=\kappa$
	and $\calF$ is maximal with respect to the above property. 
\end{defi}

Moreover, as a straightforward generalization of the countable case (see~\cite{VFSS}) one can show that:

\begin{lem}(see~\cite[Lemma 2]{VFSS}) Suppose $\calA$ is a $\kappa$-independent family and $\calF$ is a $\kappa$-diagonalization filter for $\calA$. Let $\bbM^\kappa_\calF$ be the generalized Mathias forcing relativized to the filter $\calF$.\footnote{That is $\bbM^\kappa_\calF$ consists of all pairs $(a,A)\in[\kappa]^{<\kappa}\times \calF$ such that $\sup a < \min A$.} Let $G$ be a $\bbM^\kappa_\calF$-generic filter and let $x_G=\bigcup\{a:\exists A (a,A)\in G\}$. Then $\calA\cup \{x_G\}$ is $\kappa$-independent and moreover for each $Y\in ([\kappa]^\kappa\cap V)\backslash\calA$ such that $\calA\cup\{Y\}$ is $\kappa$-independent, the family $\calA\cup\{x_G,Y\}$ is not $\kappa$-independent. 
\end{lem}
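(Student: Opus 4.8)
The plan is to show two things about the generic real $x_G=\bigcup\{a:\exists A\,(a,A)\in G\}$: first, that $\calA\cup\{x_G\}$ is $\kappa$-independent, and second, that $x_G$ \emph{diagonalizes} $\calF$ in the sense that no $Y\in([\kappa]^\kappa\cap V)\setminus\calA$ with $\calA\cup\{Y\}$ independent can be added alongside $x_G$. The first part follows the classical Mathias argument: I would fix $h\in\FF_{<\omega,\kappa}(\calA)$ and a sign for $x_G$, and show by a genericity/density argument that each of the relevant boolean combinations $\calA^h\cap x_G$ and $\calA^h\setminus x_G$ is unbounded. The key density fact is that below any condition $(a,A)$, since $A\in\calF$ satisfies $|A\cap\calA^h|=\kappa$ for every $h$ (the defining property of a $\kappa$-diagonalization filter), one can always extend the finite stem $a$ to meet $\calA^h\cap A$ above any prescribed ordinal, putting new points into $x_G$; and conversely, points of $\calA^h\cap A$ not thrown into the stem land outside $x_G$. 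Hence each boolean combination involving $x_G$ remains unbounded.

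For the second part, the crucial input is the \emph{maximality} of $\calF$ among filters satisfying $\forall F\in\calF\,\forall h\,|F\cap\calA^h|=\kappa$. Suppose toward a contradiction that $Y\in([\kappa]^\kappa\cap V)\setminus\calA$ is such that $\calA\cup\{x_G,Y\}$ is $\kappa$-independent. I would argue that this forces $Y$ to be compatible, in the relevant sense, with $\calF$. Concretely, if $\calA\cup\{Y\}$ is independent then $|Y\cap\calA^h|=\kappa$ and $|(\kappa\setminus Y)\cap\calA^h|=\kappa$ for all $h$. By maximality of $\calF$, neither $Y$ nor $\kappa\setminus Y$ can be consistently adjoined to $\calF$ while preserving the property; that is, there exist $F_0,F_1\in\calF$ and $h_0,h_1\in\FF_{<\omega,\kappa}(\calA)$ such that $|F_0\cap Y\cap\calA^{h_0}|<\kappa$ and $|F_1\cap(\kappa\setminus Y)\cap\calA^{h_1}|<\kappa$. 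Intersecting, there is $F\in\calF$ (take $F=F_0\cap F_1$, using $\kappa$-completeness) and combinations $h_0,h_1$ witnessing that $F\cap\calA^{h_0}$ almost avoids $Y$ while $F\cap\calA^{h_1}$ almost avoids $\kappa\setminus Y$.

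The final step is to transfer this ground-model configuration to the generic real. Since $x_G$ is (modulo a bounded piece) a subset of every $F\in\calF$ appearing in the generic filter, and since by genericity $x_G$ meets $\calA^{h_0}\cap F$ and $\calA^{h_1}\cap F$ on $\kappa$-sized sets, the almost-disjointness of $F\cap\calA^{h_0}$ from $Y$ propagates to show that $x_G\cap\calA^{h_0}\setminus Y$ is unbounded while $x_G\cap\calA^{h_0}\cap Y$ is bounded, and symmetrically using $h_1$ on the complement of $Y$. Combining the two I can produce an extension $h^*$ of some common refinement such that the boolean combination $(\calA\cup\{x_G\})^{h^*}$ is \emph{bounded} after intersecting with $Y$ or with $\kappa\setminus Y$, i.e. one of the required boolean combinations of $\calA\cup\{x_G,Y\}$ is bounded. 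This contradicts the assumed $\kappa$-independence of $\calA\cup\{x_G,Y\}$.

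The main obstacle I anticipate is the bookkeeping in the second part: extracting from the \emph{maximality} of the diagonalization filter the precise finite combinations $h_0,h_1$ that witness the failure, and then verifying that the generic real inherits the almost-containment relations in a way that produces a genuinely bounded boolean combination of $\calA\cup\{x_G,Y\}$ rather than merely a small one relative to $\calF$. One must be careful that $\kappa$-completeness of $\calF$ is used (rather than mere finite intersection) to combine the two witnesses into a single $F\in\calF$, and that the stem/tail structure of Mathias conditions lets us control $x_G$ on $F$ uniformly; this is exactly where the higher-cardinal setting requires the $\kappa$-complete analogue of the classical diagonalization-filter argument from~\cite{VFSS}.
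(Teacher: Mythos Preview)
The paper does not supply a proof of this lemma; it simply cites the countable analogue \cite[Lemma~2]{VFSS} and treats the generalization as routine. Your outline is the standard argument and is essentially correct, but one step in the second part is misstated and should be simplified.

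You assert that by maximality of $\calF$ \emph{neither} $Y$ nor $\kappa\setminus Y$ can be adjoined to $\calF$ while preserving the diagonalization property, producing witnesses $(F_0,h_0)$ and $(F_1,h_1)$ simultaneously. This is not true in general: if $Y\in\calF$ already, then adjoining $Y$ changes nothing and there is no pair $(F_0,h_0)$ with $|F_0\cap Y\cap\calA^{h_0}|<\kappa$. What maximality guarantees is only that \emph{at least one} of $Y,\ \kappa\setminus Y$ cannot be adjoined, so that at least one of the two witnesses exists. (Concretely: if $Y\notin\calF$ then either $\kappa\setminus Y\in\calF$, in which case $F_1=\kappa\setminus Y$ works trivially, or the $\kappa$-complete filter generated by $\calF\cup\{Y\}$ is proper and must fail the property, yielding $(F_0,h_0)$.) Fortunately a single witness suffices: if $|F_0\cap Y\cap\calA^{h_0}|<\kappa$, then since the conditions $(a,A)$ with $A\subseteq F_0$ are dense, genericity gives $x_G\subseteq^* F_0$, hence $x_G\cap Y\cap\calA^{h_0}$ is bounded, and this one bounded boolean combination already shows $\calA\cup\{x_G,Y\}$ is not $\kappa$-independent. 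Your ``combining the two'' step is unnecessary and should be deleted.

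A small remark on Part~1: to force a prescribed $\beta\in\calA^h\cap A$ \emph{out} of $x_G$ you pass from $(a,A)$ to $(a,A\setminus\{\beta\})$, which requires $\kappa\setminus\{\beta\}\in\calF$. This does hold, but it uses maximality (the cobounded filter has the diagonalization property, so $\calF$ extends it); you should make that explicit rather than leaving it implicit in ``points not thrown into the stem land outside $x_G$''.
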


Even though an appropriate iteration of posets of the above form would produce a positive answer to Question~\ref{Mid_Size}, the following remains open:

\begin{question}
	Given a $\kappa$-independent family $\calA$ is there a  $\kappa$-diagonalization filter for $\calA$? The co-bounded filter does satisfy the characterization property in Definition~\ref{diag_def}, however the requirement for maximality is not straightforward to satisfy. 
	Is there a large cardinal property which guarantees the existence of such maximal filter? Note that a diagonalization filter is never an ultrafilter.
\end{question}

Moreover of interest remain the following:

\begin{question}
	Is it consistent that $\mathfrak{i}(\kappa)<\mathfrak{a}(\kappa)$?
\end{question}

Clearly, if the above is consistent then in the corresponding model,  $\mathfrak{i}(\kappa)\geq\kappa^{++}$. One of the original questions, which motivated the work on this project is the evaluation of $\mathfrak{i}(\kappa)$ in the model from~\cite{BTVFSFDM}. More precisely, we would like to know:

\begin{question}
	Is it consistent that $\mathfrak{i}(\kappa) < \mathfrak{u}(\kappa)?$
\end{question}

The consistency of $\mathfrak{r}<\mathfrak{i}$ holds in the Miller model. However, products of the generalized Miller poset $\mathbb{MI}_\kappa^\calU$,
where $\calU$ is a $\kappa$-complete normal ultrafilter on $\kappa$ add $\kappa$-Cohen reals (see~\cite[Theorem 85]{BBTFM}) and so increase $\mathfrak{r}(\kappa)$. Even though 
$\mathbb{MI}_\kappa^\calU$ has the generalized Laver property (see~\cite[Proposition 81]{BBTFM}), it is open if the generalized Laver property is preserved under $\kappa$-support iterations. This leaves us with the following:

\begin{question}
	Is it consistent that $\mathfrak{r}(\kappa)<\mathfrak{i}(\kappa)$?
\end{question}

\section{Appendix: Strong Independence}\label{appendix_section}

Another approach towards finding a higher analogues of independence for a given uncountable cardinal $\kappa$ is to consider boolean combinations generated by strictly less than $\kappa$ (not just finitely) many members of the family. More precisely one can give the following definition:

\begin{defi} Let $\kappa$ be a regular uncountable cardinal, $\calA\subseteq [\kappa]^\kappa$ of cardinality at least $\kappa$.
	\begin{enumerate}
		\item Let $\FF_{<\kappa,\kappa}(\calA)$ be the set of
partial functions $h:\calA\to\{0,1\}$ with domain of cardinality strictly below $\kappa$ and for $h\in\FF_{<\kappa,\kappa}(\calA)$ let $\calA^h=\bigcap\{\calA^{h(A)}: A\in\dom(h)\}$ where $\calA^{h(A)}=A$ if $h(A)=0$ and $\calA^{h(A)}=\kappa\backslash A$ if $h(A)=1$. 
	    \item The family $\calA$ is said to be strongly-$\kappa$-independent 
	    if for every $h\in \FF_{<\kappa,\kappa}(\calA)$ the boolean combination $\calA^h$ is unbounded.  
	    \item The family $\calA$ is said to be maximal strongly-$\kappa$-independent if it is strongly-$\kappa$-independent and is not properly contained in another strongly-$\kappa$-independent family. 
	    \item Suppose $\kappa$ is a regular uncountable cardinal for which maximal strongly-$\kappa$-independent families exists. With  $\mathfrak{i}_s(\kappa)$ we denote the minimal size of a maximal strongly-$\kappa$-independent family. 
\end{enumerate}
\end{defi}

Note that the increasing union of a countable sequence of strongly-$\kappa$-independent families is not necessarily strongly-$\kappa$-independent. Thus one can not apply Zorn's lemma to claim the existence of maximal strongly-$\kappa$-independent families. What we can say is the following:

\begin{thm}\label{strong_classic} Let $\kappa$ be a regular uncountable caridnal.
	\begin{enumerate}
 \item For $\kappa$ strongly inaccessible, there is a strongly-$\kappa$-independent family of cardinality $2^\kappa$.
 \item If $\calA$ is strongly-$\kappa$-independent and $|\calA|<\mathfrak{r}(\kappa)$ then $\calA$ is not maximal.
 \item Suppose $\mathfrak{d}(\kappa)$ is such that for every $\gamma<\mathfrak{d}(\kappa)$, $\gamma^{<\kappa}<\mathfrak{d}(\kappa)$. If $\calA$ is strongly-$\kappa$-independent and $|\calA|<\mathfrak{d}(\kappa)$ then $\calA$ is not maximal.
    \end{enumerate}
\end{thm}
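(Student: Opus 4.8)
The plan is to handle the three parts separately, in each case working with the family of all boolean combinations $\calB=\{\calA^h: h\in\FF_{<\kappa,\kappa}(\calA)\}$, whose cardinality is at most $|\calA|^{<\kappa}$ (for $|\calA|\geq\kappa$ it is exactly $|\calA|^{<\kappa}$, since distinct $h$ yield distinct combinations by strong independence).

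For part (1) I would adapt the classical Fichtenholz--Kantorovich/Hausdorff construction to the inaccessible setting. Let $X=\{(s,S): s\in[\kappa]^{<\kappa},\ S\subseteq\calP(s)\}$. Since $\kappa$ is strongly inaccessible, $\kappa^{<\kappa}=\kappa$ and $2^{\mu}<\kappa$ for $\mu<\kappa$, so $|X|=\kappa$; fix a bijection $X\cong\kappa$. For $A\subseteq\kappa$ put $I_A=\{(s,S)\in X: A\cap s\in S\}$. First I would check that $A\mapsto I_A$ is injective (witnessed by singletons $s=\{\al\}$ with $\al\in A\triangle A'$), giving a family of size $2^\kappa$, each $I_A$ of size $\kappa$. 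To see strong independence, fix $h\in\FF_{<\kappa,\kappa}$ with domain $\{A_i:i<\mu\}$, $\mu<\kappa$, the $A_i$ pairwise distinct. Choosing one element of each symmetric difference $A_i\triangle A_j$ gives a set $w\in[\kappa]^{<\kappa}$ so that for every $s\supseteq w$ the traces $A_i\cap s$ are pairwise distinct; hence one may pick $S\subseteq\calP(s)$ realizing the pattern ($A_i\cap s\in S$ iff $h(A_i)=0$), placing $(s,S)\in\calA^h$. As $s$ ranges over the $\kappa$-many members of $[\kappa]^{<\kappa}$ containing $w$, this produces $\kappa$ distinct elements of $\calA^h$; since $\kappa$ is regular, unbounded is the same as of size $\kappa$, so $\calA^h$ is unbounded.

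For part (2) the key observation is that adjoining a new set $Y$ preserves strong independence precisely when $Y$ \emph{splits} every boolean combination, i.e. $\calA^h\cap Y$ and $\calA^h\setminus Y$ are both unbounded for all $h\in\FF_{<\kappa,\kappa}(\calA)$. As no member of $\calA$ splits all of its own boolean combinations (e.g. $A$ does not split $\calA^{\{(A,0)\}}=A$), $\calA$ is maximal exactly when no single $Y\in[\kappa]^\kappa$ splits every element of $\calB$, i.e. when $\calB$ is an unsplit (reaping) family. Thus a maximal strongly-$\kappa$-independent family witnesses $\mathfrak r(\kappa)\leq|\calB|\leq|\calA|^{<\kappa}$, and the contrapositive gives non-maximality once $|\calB|<\mathfrak r(\kappa)$; I would record the count $|\calB|\le|\calA|^{<\kappa}$ explicitly at this step. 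Part (3) runs along the same lines but with domination in place of reaping, and this is where the arithmetic hypothesis is used: $|\calA|<\mathfrak d(\kappa)$ together with $\gamma^{<\kappa}<\mathfrak d(\kappa)$ for all $\gamma<\mathfrak d(\kappa)$ yields $|\calB|\le|\calA|^{<\kappa}<\mathfrak d(\kappa)$. For each $c\in\calB$ let $f_c\in{}^\kappa\kappa$ be the ``next element'' function $f_c(\xi)=\min(c\setminus\xi)$. Since $\{f_c:c\in\calB\}$ has size $<\mathfrak d(\kappa)$ it is not dominating, so there is $g\in{}^\kappa\kappa$ with $\{\xi: g(\xi)>f_c(\xi)\}$ unbounded for every $c$. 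I would use $g$ to build an increasing continuous interval partition $\langle[\gamma_\xi,\gamma_{\xi+1}):\xi<\kappa\rangle$ along which each $c$ meets cofinally many intervals, and then define $Y$ as a parity-union of intervals so that each $c$ meets both $Y$ and $\kappa\setminus Y$ cofinally; such $Y$ splits every combination, so $\calA\cup\{Y\}$ is strongly-$\kappa$-independent and $\calA$ is not maximal.

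The main obstacle I anticipate is in part (3): arranging, from the single escaping function $g$, that every $c$ meets \emph{both} $Y$ and its complement cofinally. Meeting cofinally many intervals only guarantees one colour a priori, and making both cofinal is itself a splitting problem for the $<\mathfrak d(\kappa)$-many hit-sets $S_c=\{\xi: c\cap[\gamma_\xi,\gamma_{\xi+1})\neq\emptyset\}$. I would resolve this by making $g$ oscillate, so that each $c$ has elements in cofinally many \emph{consecutive} pairs of intervals, and then colour by parity, mirroring the classical proof of $\mathfrak d\leq\mathfrak i$; the delicate verification is that the escaping property, which holds on an unbounded set of $\xi$, can be transferred to the club of interval endpoints $\{\gamma_\xi:\xi<\kappa\}$. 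The cardinality bookkeeping for $\calB$ (the bound $|\calA|^{<\kappa}$ and the exact role of $\gamma^{<\kappa}<\mathfrak d(\kappa)$) is routine once this construction is in place.
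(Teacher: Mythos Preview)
Your approach matches the paper's. For part~(1) both you and the paper run the Hausdorff construction; the only difference is cosmetic (the paper indexes by pairs $(\gamma,A)$ with $\gamma<\kappa$ an ordinal and $A\subseteq\calP(\gamma)$, while you allow arbitrary $s\in[\kappa]^{<\kappa}$). For part~(2) the paper's proof is the same single observation as yours: the boolean combinations of a maximal family form an unreaped family, so a splitter $X$ witnesses non-maximality. You are in fact more careful than the paper here, since you record that $|\calB|\leq|\calA|^{<\kappa}$; the paper simply writes ``$|\calA|<\mathfrak r(\kappa)$ implies $\{\calA^h:h\in\FF_{<\kappa,\kappa}(\calA)\}$ is split'' without commenting on the passage from $|\calA|$ to $|\calB|$, so the gap you flag is present in the paper as well (it vanishes in the finitary setting of Lemma~\ref{classic}, where $|\calA|^{<\omega}=|\calA|$). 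For part~(3) the paper gives no argument at all and defers to~\cite[Proposition~27]{BTVFSFDM}; your outline via next-element functions, an undominated $g$, an interval partition built from closure points of $g$, and a parity colouring is exactly the expected generalization of the classical $\mathfrak d\leq\mathfrak i$ proof, and the obstacle you isolate (ensuring both colours are hit cofinally) is the one genuine technical point in that argument.
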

\begin{proof}
	We will prove $(1)$. Let $\calC=\{(\gamma, A): \gamma<\kappa, A\subseteq\calP(\gamma)\}$. Given $X\subseteq \kappa$ define $\calY_X=\{(\gamma, A)\in \calC: X\cap \gamma\in A\}$. Then $\calY_X: X\subseteq\kappa\}$ is strongly-$\kappa$-independent. Indeed. Consider two disjoint subfamilies of $[\kappa]^\kappa$, each of size strictly smaller than $\kappa$, say $\{X_i\}_{i\in I_1}$ and $\{Z_j\}_{j\in I_2}$.
	Note that $(\gamma, A)\in \calX=\bigcap_{i\in I_1} \calY_{X_i}\cap\bigcap_{j\in I_2}(\calC\backslash \calY_{Z_j})$ if for all $i\in I_1$, $X_i\cap A\in A$ and for all $j\in I_2$, $Z_j\cap \gamma\notin A$. However, there are unboundedly many $\gamma\in\kappa$ such that 
	\begin{itemize}
		\item $X_i\cap \gamma\neq X_{i'}\cap \gamma$ for $i\neq i'$ both in $I_1$, and
		\item $Z_j\cap \gamma\neq Z_{j'}\cap\gamma$ for $j\neq j'$ both in $I_2$, and
		\item $X_i\cap \gamma\neq Z_j\cap \gamma$ for all $i\in I_1$, $j\in I_2$.
	\end{itemize} 
	It remains to observe that for each such $\gamma$, we have $(\gamma, A_\gamma)\in \calX$, where $A_\gamma=\{X_i\cap \gamma:i\in I_1\}$.

	To see part $(2)$ note that if $|\calA|<\mathfrak{r}(\kappa)$, then the set $\{\calA^h:h\in\FF_{<\kappa,\kappa}(\calA)\}$ is split by some $X\in[\kappa]^\kappa$ and so $\calA\cup\{X\}$ is strongly $\kappa$-independent which properly contains $\calA$.
	
	For a proof of part $(3)$, see~\cite[Proposition 27]{BTVFSFDM}.
\end{proof}

\begin{cor} Thus, if $\mathfrak{i}_s(\kappa)$ is defined, then $\kappa^+\leq\mathfrak{i}_s(\kappa)\leq 2^\kappa$. Moreover  $\mathfrak{r}(\kappa)\leq\mathfrak{i}_s(\kappa)$  and 
	if for every $\gamma<\mathfrak{d}(\kappa)$, $\gamma^{<\kappa}<\mathfrak{d}(\kappa)$, then $\mathfrak{d}(\kappa)\leq\mathfrak{i}_s(\kappa)$. 
\end{cor}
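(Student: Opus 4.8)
The plan is to read off all four inequalities directly from Theorem~\ref{strong_classic} together with the basic bound $\kappa^+\le\mathfrak{r}(\kappa)$ recorded in Lemma~\ref{classic}, so the argument is essentially a bookkeeping exercise built on the contrapositives of parts $(2)$ and $(3)$. The clause ``if $\mathfrak{i}_s(\kappa)$ is defined'' enters only to guarantee that a maximal strongly-$\kappa$-independent family exists, so that $\mathfrak{i}_s(\kappa)$ is a genuine cardinal and the inequalities have content.

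First I would dispose of the upper bound $\mathfrak{i}_s(\kappa)\le 2^\kappa$, which does not even invoke the theorem: every strongly-$\kappa$-independent family is a subfamily of $[\kappa]^\kappa\subseteq\Power(\kappa)$ and therefore has cardinality at most $2^\kappa$. In particular every maximal such family does, so the least size of a maximal strongly-$\kappa$-independent family is $\le 2^\kappa$.

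Next I would obtain the two lower bounds by contraposition. Part $(2)$ of Theorem~\ref{strong_classic} asserts that any strongly-$\kappa$-independent $\calA$ with $|\calA|<\mathfrak{r}(\kappa)$ fails to be maximal; equivalently, every maximal strongly-$\kappa$-independent family has cardinality at least $\mathfrak{r}(\kappa)$, whence $\mathfrak{r}(\kappa)\le\mathfrak{i}_s(\kappa)$. Reading part $(3)$ the same way, under the stated closure hypothesis that $\gamma^{<\kappa}<\mathfrak{d}(\kappa)$ for every $\gamma<\mathfrak{d}(\kappa)$, yields $\mathfrak{d}(\kappa)\le\mathfrak{i}_s(\kappa)$. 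Finally, the remaining lower bound $\kappa^+\le\mathfrak{i}_s(\kappa)$ follows by combining $\mathfrak{r}(\kappa)\le\mathfrak{i}_s(\kappa)$ with the inequality $\kappa^+\le\mathfrak{r}(\kappa)$ from Lemma~\ref{classic}$(2)$.

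There is essentially no obstacle here, since all the genuine work has already been carried out in proving Theorem~\ref{strong_classic}. The only points demanding any care are that the passage from ``families below a given size are non-maximal'' to a lower bound on $\mathfrak{i}_s(\kappa)$ is precisely the contrapositive, and that the clause about $\mathfrak{d}(\kappa)$ must retain the same cardinal-arithmetic hypothesis as in part $(3)$ of the theorem.
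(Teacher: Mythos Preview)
Your proposal is correct and matches the paper's treatment: the corollary is stated without proof, beginning with ``Thus'', signalling that it is meant to follow immediately from Theorem~\ref{strong_classic} by exactly the contrapositive reading you give, together with the trivial bound $|\calA|\le 2^\kappa$ and $\kappa^+\le\mathfrak{r}(\kappa)$ from Lemma~\ref{classic}(2).
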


\begin{question} $ $
	\begin{enumerate}
	 \item Is there a large cardinal property which implies the existence of a maximal strongly-$\kappa$-independent family?
	 \item Given a strongly $\kappa$-independent family $\calA$, is there a large cardinal property which implies the existence of a $\kappa$-diagonalization filter for $\calA$?
     \item Suppose $\mathfrak{i}_s(\kappa)$ is defined. A family which is strongly-$\kappa$-independent is $\kappa$-independent. However a maximal strongly independent family is not necessarily maximal independent. Is there a ZFC relation between $\mathfrak{i}_s(\kappa)$ and $\mathfrak{i}(\kappa)$? 
\end{enumerate}
\end{question}

\bigskip

\noindent {\em Acknowledgements}. The authors would like to thank the anonymous referee for many helpful comments which significantly improved the quality of the manuscript.

\end{document}